\newtheorem*{claim}{\hspace{2em} Claim}
\newtheorem{Theorem}{Theorem}[section]%按章排序号
\newtheorem{Definition}[Theorem]{ Definition}
\newtheorem{Lemma}[Theorem]{ Lemma}
\newtheorem{Proposition}[Theorem]{ Proposition}
\newtheorem{theoremalph}{Theorem}
\newtheorem*{Remark}{Remark}
\newcommand{\subsectionruninhead}{\@startsection{subsection}{2}{0mm}{-\baselineskip}{-0mm}{\bf\large}}
\newcommand{\subsubsectionruninhead}{\@startsection{subsubsection}{3}{0mm}{-\baselineskip}{-0mm}{\bf\normalsize}}
\newsavebox{\@brx}
\newcommand{\llangle}[1][]{\savebox{\@brx}{\(\m@th{#1\langle}\)}
	\mathopen{\copy\@brx\kern-0.5\wd\@brx\usebox{\@brx}}}
\newcommand{\rrangle}[1][]{\savebox{\@brx}{\(\m@th{#1\rangle}\)}
	\mathclose{\copy\@brx\kern-0.5\wd\@brx\usebox{\@brx}}}
\begin{document}
   \newpage
   \title{The Lyapunov Exponents of Hyperbolic Measures for $C^1$ Star Vector Fields on Three-dimensional Manifolds }
   \author{ Yuansheng Lu  \and Wanlou Wu \footnote{Wanlou Wu is the corresponding author and was supported by NSFC 12001245.}}
   \date{}
   \maketitle

\begin{abstract}
   In this paper, we proved that for every $C^1$ star vector fields on three-dimensional manifolds, every ergodic hyperbolic invariant measure which is not supported on singularities can be approximated by periodic measures, and the Lyapunov exponents of the ergodic hyperbolic invariant measure can also be approximated by the Lyapunov exponents of those periodic measures. 
\end{abstract}

\section{Introduction}
   Let $M^d$ be a $d$-dimensional $C^\infty$ compact Riemannian manifold without boundary. Denote by $\mathfrak{X}^r(M^d)(r\geq 1)$ the space of $C^r$ vector fields on $M^d$. Let $\varphi^X_t=(\varphi_t)_{t\in\mathbb{R}}$ be the $C^1$ flow generated by a vector field $X\in\mathfrak{X}^1(M^d)$, for simplicity, denoted by $\varphi_t$. The derivative of the flow with respect to the space variable is called the \emph{\bf tangent flow} and is denoted by $\Phi_t={\rm d}\varphi^X_t$. Fix a smooth Riemannian metric on $M$, we get a scalar product in every tangent space $T_xM$, $x\in M$, which depends on $x$ in a differentiable way. The limit $$\chi(x,v)=\lim_{t\rightarrow\pm\infty}\dfrac{1}{ t}\log\lVert\Phi_t(v)\rVert,~v\in T_xM,~v\neq0,~x\in M,$$ is called a \emph{\bf Lyapunov exponent} for a tangent vector $v\in T_xM$. The Lyapunov exponents of a differential equation are a natural generalization of the eigenvalues of the matrix in the linear part of the equation. Lyapunov exponents describe asymptotic evolution of a tangent map: positive or negative exponents correspond to exponential growth or decay of the norm, respectively, whereas vanishing exponents mean lack of exponential behavior.
   
   By the Oseledec Theorem \cite{OV}, the limit $\chi(x,v)$ exists for all non-zero vectors $v$ based on almost all state points $x\in M$ with respect to any given invariant measure, and it is independent of the points if the measure is ergodic. Precisely, let $\mu$ be an invariant measure of flow $\varphi_t$, by the Oseledec Theorem \cite{OV}, for $\mu$-almost every $x$, there are a positive integer $k(x)$, real numbers $\chi_1(x)<\chi_2(x)<\cdots<\chi_{k(x)}(x)$ and a measurable $\Phi_t$-invariant splitting $$T_xM=E_1(x)\oplus E_2(x)\oplus\cdots\oplus E_k(x)$$ such that $$\lim_{t\to\pm\infty}\dfrac{1}{ t}\log\parallel\Phi_t(v)\parallel=\chi_i(x),~\forall~v\in E_i(x),~v\neq 0,~i=1,2,\cdots,k(x).$$ The numbers $\chi_1(x),\cdots,\chi_{k(x)}(x)$ are called \emph{\bf Lyapunov exponents} at point $x$ of $\Phi_t$ with respect to $\mu$. Denote by  $$d_i=\text{dim}(E_i(x)),\quad i=1,2,\cdots,k(x)$$ the multiplicities of those Lyapunov exponents, and the vector formed by these numbers (counted with multiplicity, endowed with the increasing order) is called \emph{\bf Lyapunov vector} at point $x$ of $\Phi_t$ with respect to $\mu$. Sometimes, regardless of the multiplicity, we also rewrite the Lyapunov exponents of $\mu$ by $\lambda_1(x)\leq\lambda_2(x)\leq\cdots\leq\lambda_d(x)$. It menas that $$\lambda_j(x)=\chi_i(x),\quad\text{ for each } d_1+d_2+\cdots+d_{i-1}<j\leq d_1+d_2+\cdots+d_i.$$ If $\mu$ is an ergodic invariant measure, then $k(x),\chi_1(x),\cdots,\chi_{k(x)}(x),\lambda_1(x),\cdots,\lambda_d(x)$ are constants which are independent of $x$. The abstract Lyapunov exponent is an active topic in the theory of nonuniformly hyperbolic systems known as Pesin theory that recovers some hyperbolic behavior for the points whose Lyapunov exponents are all non-zero. Furthermore, these points have well-defined unstable and stable invariant manifolds. In view of these reasons, an ergodic invariant measure is called \emph{\bf hyperbolic} if its Lyapunov exponents are different from zero except in the direction of flow.    

   For diffeomorphisms, Anosov \cite{A67} proved that non-wandering orbit segment has a true periodic orbit nearby for the uniformly hyperbolic diffeomorphisms. Katok \cite{Ka} showed that hyperbolic periodic points are dense in the closure of the basin of a given hyperbolic measure for non-uniformly hyperbolic diffeomorphisms. Later, Wang and Sun \cite{WW10} enhanced Katok's work showing that the Lyapunov exponents of an ergodic hyperbolic invariant measure for $C^{1+\alpha}(\alpha>0)$ diffeomorphisms are approximated by those of a hyperbolic atomic measure on a periodic orbit (see also \cite{ZS10} for $C^1$ diffeomorphisms when the Oseledec splitting is dominated). For more approximation properties, one can refer to \cite{LLS09}. Since a vector field is a continuous dynamical system with respect to the time variable and a diffeomorphism can be regarded as a time-$1$ map of some suitable vector field, the dynamical behavior of vector field and diffeomorphism is similar in most cases. By considering the sectional Poincar\'{e} maps of the vector field, sometimes one can get some (not all) similar properties between $d$-dimensional vector fields and $(d-1)$-dimensional diffeomorphisms. It is natural to ask the following two questions:    
\begin{enumerate}
   \item [\rm (1)] Are the hyperbolic periodic points dense in the closure of the basin of a given hyperbolic invariant measure for vector fields?
   
   \item [\rm (2)] Can the Lyapunov exponents of an ergodic hyperbolic invariant measure for vector fields are approximated by those of a hyperbolic atomic measure on a periodic orbit?   
\end{enumerate}   
   
   For the question $(1)$, Ma \cite{M22} stated that hyperbolic periodic points are dense in the closure of the basin of a given hyperbolic measure for smooth semiflows on separable Banach spaces. Recently, Li, Liang and Liu \cite{LLL24} prove that every ergodic hyperbolic invariant measure which is not supported on singularities can be approximated by periodic measures. Comparing with the diffeomorphisms, singularities of vector fields bring more difficulties. Flows with singularities have rich and complicated dynamics such as the \emph{Lorenz attractor} \cite{Lor63,Guc76}. At singularities, one can not define the \emph{linear Poincar\'e flow} (see Definition \ref{Def:linearPoincare}). Hence we lose some compact properties. This prevents one to use some techniques of diffeomorphisms to singular vector fields, such as Crovisier's central model and Pujals-Sambarino's distortion arguments. Although, the sectional Poincar\'{e} maps of the vector field can get similar properties as diffeomorphisms with lower one dimensional, some vector field (the famous Lorenz attractor \cite{Lor63,Guc76}) displays different dynamics. In the spirit of the Lorenz attractor, geometric Lorenz attractors \cite{ABS77,Guc76,GW79} were constructed in a theoretical way. Loosely speaking, a geometric Lorenz attractor is a robust attractor and it contains a hyperbolic singularity which is accumulated by hyperbolic periodic orbits in a robust way. The return map of a geometric Lorenz attractor has some discontinuous points. This fact gives extra diffculties when one wants to generalize Ma\~{n}\'{e} \cite{MR82} classical argument and Katok's \cite{Ka} classical result. 
   
   In this paper, we consider $C^1$ star vector fields on three-dimensional compact Riemannian manifold $M$ and provide a positive responses to these questions $(1)$ and $(2)$. Now, we state the main result of the present paper as the following Theorems \ref{ThmA} and \ref{ThmB}.  
   
\begin{theoremalph}\label{ThmA}
   Let $X$ be a $C^1$ star vector field on three-dimensional compact Riemannian manifold $M$, $\mu$ be an ergodic hyperbolic invariant measure which is not supported on singularities. Then, there is a sequence of periodic measures converging to $\mu$ in the weak$^*$ topology.    
\end{theoremalph}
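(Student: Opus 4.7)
The plan is to combine the dominated (indeed singular hyperbolic) structure that a $C^1$ star vector field on a three-manifold induces on the normal bundle with a Katok-style periodic approximation scheme. First I would set up the linear Poincar\'e flow $\psi_t$ on the normal bundle $N=\bigcup_{x\in\mathrm{supp}(\mu)}N_x$; since $\mu$ is not supported on singularities, this bundle is well-defined $\mu$-almost everywhere. Because $X$ is star and $\dim M=3$, a Liao--Ma\~n\'e type argument yields a dominated splitting $N=N^{cs}\oplus N^{cu}$ over $\mathrm{supp}(\mu)$, which coincides with the Oseledec splitting of $\mu$ since the latter is hyperbolic with one negative and one positive exponent transverse to the flow.

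Next I would construct Pesin-type uniform hyperbolicity blocks $\Lambda_k\subset\mathrm{supp}(\mu)$ with $\mu(\Lambda_k)\to 1$ as $k\to\infty$, on which the hyperbolic estimates and the transverse local stable/unstable manifolds of the flow have uniform size. By the Birkhoff ergodic theorem and Poincar\'e recurrence applied to $\mu$, I can select a $\mu$-generic point $x\in\Lambda_k$ together with times $T_n\to\infty$ so that $\varphi_{T_n}(x)$ returns arbitrarily close to $x$ and the orbit segment $\{\varphi_t(x):0\le t\le T_n\}$ spends a proportion of time in $\Lambda_k$ tending to $1$.

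With such recurrent segments in hand I would invoke an ergodic closing/shadowing lemma adapted to the singular hyperbolic setting: given the dominated splitting and the uniform Pesin estimates on $\Lambda_k$, every sufficiently long recurrent orbit segment of $X$ returning near its starting point is shadowed by a genuine periodic orbit $\gamma_n$ of the same flow $X$ whose period is close to $T_n$. The star hypothesis is crucial here, because it guarantees that any approximate periodic orbit produced by a $C^1$-perturbation-based closing is uniformly hyperbolic already for the unperturbed $X$, so the perturbation can be removed and a true hyperbolic periodic orbit of $X$ extracted; in particular the $\gamma_n$ one obtains are saddles with stable dimension matching that of $\mu$.

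Letting $\mu_n$ denote the normalized Lebesgue measure on $\gamma_n$, the shadowing distance between $\gamma_n$ and the orbit segment through $x$ tends to zero while the fraction of time spent near $\Lambda_k$ tends to one, so Birkhoff averages of continuous functions against $\mu_n$ converge to those against $\mu$; this gives $\mu_n\to\mu$ in the weak$^*$ topology. The main obstacle I foresee is the closing step: long recurrent orbits of a $\mu$-generic point may visit arbitrarily small neighborhoods of singularities, where the linear Poincar\'e flow degenerates and the transverse geometry collapses, and the return map associated with a nearby Lorenz-type singularity is discontinuous. Overcoming this will require working with the extended linear Poincar\'e flow and exploiting singular hyperbolicity in dimension three to maintain uniform transverse control during closing, so that the resulting periodic orbit does not get trapped near a singularity and remains close to $\mathrm{supp}(\mu)$.
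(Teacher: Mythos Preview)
Your overall architecture---dominated splitting on the normal bundle, Pesin blocks, Birkhoff plus Poincar\'e recurrence to produce long recurrent segments, closing to periodic orbits, weak$^*$ convergence---matches the paper. The gap is in how you justify the closing step and how you deploy the star hypothesis.

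You first say (correctly) that a long recurrent segment with good Pesin estimates is shadowed by a genuine periodic orbit of $X$, but you then explain this via a $C^1$-perturbation-based closing followed by ``removing the perturbation'' thanks to star. That second explanation does not work: the star condition says that periodic orbits of each nearby $Y$ are hyperbolic \emph{for $Y$}, not for $X$, and hyperbolicity of a $Y$-orbit provides no mechanism for locating a nearby $X$-orbit---a periodic orbit created by perturbation can simply vanish when you revert to $X$. The tool that is actually needed, and the one the paper uses, is Liao's shadowing lemma for singular flows (Theorem~\ref{Shadow}): it closes an $(\eta,T)$-$\psi^*_t$-quasi-hyperbolic orbit segment of $X$ to a periodic orbit of $X$ itself, with no perturbation involved.

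Two further points your outline misses. First, all hyperbolicity and domination estimates must be taken for the \emph{scaled} linear Poincar\'e flow $\psi^*_t=\psi_t/\lVert\Phi_t|_{\langle X\rangle}\rVert$, not for $\psi_t$; the rescaling is precisely what restores uniformity as orbits approach singularities and is the form of the input Liao's lemma requires. Second, the star condition enters not at closing but earlier, through Lemma~\ref{SWDS}: it furnishes a dominated splitting of $\mathcal{N}_{\mathrm{supp}(\mu)\setminus\mathrm{Sing}(X)}$ for $\psi^*_t$ with $\dim E=\mathrm{Ind}(\mu)$, which a short argument shows coincides with the Oseledec splitting (and in passing forces $\lambda_1<0<\lambda_2$). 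With this in place, recurrent segments starting and ending in a Pesin block $\Lambda^T_\eta(k)$ are quasi-hyperbolic in Liao's sense, and Theorem~\ref{Shadow} closes them directly. The singularity obstacle you anticipate is handled by the rescaling built into $\psi^*_t$ together with the endpoint condition $d(\cdot,\mathrm{Sing}(X))>\alpha$ in Liao's lemma; neither singular hyperbolicity nor the extended linear Poincar\'e flow is invoked in the paper's proof of Theorem~\ref{ThmA}.
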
   

   For the question $(2)$, no response has been seen at present. Based on Theorem \ref{ThmA}, we will show in the present paper that the Lyapunov exponents of a hyperbolic ergodic measure are approximated by those of a hyperbolic atomic measure on a periodic orbit. Lyapunov exponents of an atomic measure concentrated on a periodic orbit with period $\pi$ are exactly the logarithm of the norms of eigenvalues of $\Phi_\pi$. Since there is always a zero Lyapunov exponent for $\Phi_t$ along the flow direction, we only consider the Lyapunov exponents of the {\rm (}scaled{\rm )} linear Poincar\'e flow (see Definition \ref{Def:linearPoincare}) with respect to the hyperbolic ergodic measure.      
      	
\begin{theoremalph}\label{ThmB}
   Let $X$ be a $C^1$ star vector field on three-dimensional compact Riemannian manifold $M$, $\mu$ be an ergodic hyperbolic invariant measure, which is not supported on singularities, with Lyapunov exponents $\lambda_1\leq\lambda_2$ with respect to the {\rm (}scaled{\rm )} linear Poincar\'e flow. To be precise, for every $\varepsilon>0$, there is a periodic point $p$ with Lyapunov exponents $\lambda_1(p)\leq\lambda_2(p)$ such that $$\lvert\lambda_i-\lambda_i(p)\rvert<\varepsilon,~i=1,2.$$  
\end{theoremalph}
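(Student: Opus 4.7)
The plan is to promote the weak-$*$ convergence $\mu_n\to\mu$ supplied by Theorem~\ref{ThmA} to convergence of the Lyapunov exponents of the scaled linear Poincar\'e flow $\psi^*_t$. The essential input beyond Theorem~\ref{ThmA} is that, for a $C^1$ star vector field on a three-manifold, $\psi^*_t$ admits a \emph{continuous} dominated splitting of rank $1+1$,
\[
\mathcal{N}=E\oplus F,
\]
over the closure of every non-singular invariant subset of the chain recurrence class containing $\mathrm{supp}(\mu)$; thanks to the scaling, this splitting extends continuously even across accumulations onto the (necessarily Lorenz-like) singularities, a singular-hyperbolic structure developed by Liao, Ma\~n\'e, Gan--Yang and others for star flows. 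In particular the cocycle functions
\[
\phi_E(x)=\log\|\psi^*_1(x)|_{E_x}\|,\qquad \phi_F(x)=\log\|\psi^*_1(x)|_{F_x}\|
\]
are bounded and continuous on the compact invariant set $\Lambda$ obtained as the closure of $\mathrm{supp}(\mu)\cup\bigcup_n\gamma_n$ in the appropriate (possibly extended) normal bundle, where $\gamma_n$ are the periodic orbits provided by Theorem~\ref{ThmA}.

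Since $E$ and $F$ are continuous $\psi^*_t$-invariant line bundles, for any $\psi^*_t$-invariant measure $\nu$ supported in $\Lambda$ the two numbers $\int\phi_E\,d\nu$ and $\int\phi_F\,d\nu$ are Lyapunov exponents of $\nu$, and because the normal bundle has rank two they are the \emph{only} such exponents. Moreover, domination forces $\int\phi_E\,d\nu<\int\phi_F\,d\nu$, so, ordering them, $\lambda_1^\nu=\int\phi_E\,d\nu$ and $\lambda_2^\nu=\int\phi_F\,d\nu$. Applied to $\mu$ and to each periodic measure $\mu_n$ this yields
\[
\lambda_i=\int\phi_i\,d\mu,\qquad \lambda_i(p_n)=\int\phi_i\,d\mu_n\quad(i=1,2),
\]
where $p_n\in\gamma_n$ and $\phi_1=\phi_E,\phi_2=\phi_F$. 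Theorem~\ref{ThmA} now gives $\mu_n\to\mu$ in the weak-$*$ topology, so by continuity and boundedness of $\phi_E,\phi_F$ on $\Lambda$,
\[
\int\phi_i\,d\mu_n\longrightarrow\int\phi_i\,d\mu=\lambda_i,\qquad i=1,2,
\]
and for any $\varepsilon>0$, large enough $n$ produces a periodic point $p=p_n$ with $|\lambda_i-\lambda_i(p)|<\varepsilon$.

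The hard part, and the place where the vector-field setting genuinely differs from the diffeomorphism setting, is verifying that $\phi_E$ and $\phi_F$ are bounded and continuous on the whole compact set $\Lambda$. The periodic orbits produced by Theorem~\ref{ThmA} need not stay uniformly away from the singular set; inside a geometric Lorenz attractor they accumulate onto the singularity, and over a singularity the \emph{unscaled} linear Poincar\'e flow blows up. The scaling is precisely what cures this, but one must verify that the dominated splitting of $\psi^*_t$, together with the cocycle norms, admit a continuous extension along every sequence of regular points tending to a (Lorenz-like) singularity, and that the periodic orbits of Theorem~\ref{ThmA} lie in the domain of this extended structure. Making this extension rigorous and checking that the approximating periodic orbits fit inside it is the singular counterpart, specific to vector fields, of the shadowing-plus-uniform-hyperbolicity step used in the classical Katok and Wang--Sun approximation results for diffeomorphisms, and it is the technical heart of the proof.
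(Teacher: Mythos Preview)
Your approach is genuinely different from the paper's. The paper does \emph{not} bootstrap Theorem~\ref{ThmA}; instead it re-runs Liao's shadowing lemma and compares the scaled linear Poincar\'e cocycle along the periodic orbit with that along the shadowed orbit via ``transition maps'' $T_i(y,p)=(\psi^*_T|_{\mathcal N_{\varphi_{(i+1)T}(y)}})^{-1}\circ\psi^*_{\theta((i+1)T)-\theta(iT)}|_{\mathcal N_{\varphi_{\theta(iT)}(p)}}$ that are shown to be $C^0$-close to the identity. This yields $\lambda_2(p)\le\lambda_2+\epsilon$ directly from submultiplicativity, and $\lambda_2(p)\ge\lambda_2-\epsilon$ via a cone argument; the bound on $\lambda_1$ then comes from applying the same to $-X$. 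Your route---expressing the exponents as integrals of the one-dimensional cocycles $\phi_E,\phi_F$ and passing to the limit under weak-$*$ convergence---is conceptually cleaner and exploits the rank-$(1{+}1)$ structure well. Its cost is that you must import a global dominated splitting for $\psi^*_t$ (or $\widetilde\psi_t$) on a compact set containing \emph{all} the approximating periodic orbits, not just $\mathrm{supp}(\mu)$; Lemma~\ref{SWDS} gives the splitting only on $\mathrm{supp}(\mu)\setminus\mathrm{Sing}(X)$, so you are implicitly invoking singular hyperbolicity of the ambient chain class for three-dimensional star flows. You should also make explicit that $\phi_E,\phi_F$ are bounded (this follows from the uniform bounds on $\|\Phi_t\|$, since the scaling factor $|X(x)|/|X(\varphi_t(x))|=\|\Phi_t|_{\langle X(x)\rangle}\|^{-1}$ is uniformly controlled) and that their discontinuity set is contained in $\mathrm{Sing}(X)$, which has $\mu$-measure zero, so that the Portmanteau theorem justifies passing weak-$*$ convergence through these merely bounded, almost-continuous test functions. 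The paper's hands-on approach avoids both imports at the price of longer estimates; your approach trades those estimates for structural input about star flows that the paper never states.
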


   One of the main difficulties for proving Theorems \ref{ThmA} and \ref{ThmB} is the presence of singularities. Even there is no singularities, we are not able to use the usual Pesin theory as in Lian and Young \cite{Lian} since the vector field is only $C^1$. What we need to do is conquer the difficulties caused by singularities and $C^1$ differentiability. Although, the shadowing lemma for singular flows given by Liao \cite{LST85}provided us with a method to find periodic points, we can not estimate the Lyapunov exponents of flow with respect to periodic measures as diffeomorphisms. Our paper is organized as follows. In Section $2$, we introduce some preliminary facts on vector field. In Section $3$, we introduce Lyapunov metric and shadowing lemma on vector field. Theorems \ref{ThmA} and \ref{ThmB} are proved in Section $4$ and Section $5$, respectively.  
       
\section{Preliminaries}	

\subsection{Basic Contents of Vector Fields}
   Let $M^d$ be a $d$-dimensional $C^\infty$ compact Riemannian manifold without boundary. Denote by $\mathfrak{X}^r(M^d)(r\geq 1)$ the space of all $C^r$ vector fields on $M^d$. Given $X\in\mathfrak{X}^1(M^d)$, a point $x\in M^d$ is a \emph{singularity} if $X(x)=0$. Denote by ${\rm Sing}(X)$ the set of all singularities. A point $x$ is \emph{regular} if $X(x)\neq 0$. Let $\varphi^X_t=(\varphi_t)_{t\in\mathbb{R}}$ be the $C^1$ flow generated by a vector field $X$, for simplicity, denoted by $\varphi_t$. A regular point $p$ is \emph{periodic}, if $\varphi_{t_0}(p)=p$ for some $t_0>0$. A critical point is either a singularity or a periodic point. Denote the \emph{normal bundle} of $X$ by $$\mathcal{N}^X\triangleq\bigcup\limits_{x\in M\setminus{\rm Sing}(X)}\mathcal{N}_x,$$ where $\mathcal{N}_x$ is the orthogonal complement space of the flow direction $X(x)$, i.e., $$\mathcal{N}_x=\{v\in T_{x}M: v\perp X(x)\}.$$
     
   For the flow $\varphi^X_t$ generated by $X$, its derivative with respect to the space variable is called the \emph{tangent flow} and is denoted by $\Phi_t={\rm d}\varphi^X_t$.
     
\begin{Definition}\label{Def:linearPoincare}
   Given $x\in M\setminus{\rm Sing}(X),v\in\mathcal{N}_x$ and $t\in\mathbb{R}$, the \emph{\bf linear Poincar\'e flow} $$\psi_t:\mathcal{N}\rightarrow\mathcal{N}$$ is defined as $$\psi_t(v)\triangleq\Phi_t(v)-\frac{\langle\Phi_t(v),X(\varphi_t(x)) \rangle}{\parallel X(\varphi_t(x))\parallel^2} X(\varphi_t(x)).$$ Namely, $\psi_t(v)$ is the orthogonal projection of $\Phi_t(v)$ on $\mathcal{N}_{\varphi_t(x)}$ along the flow direction $X(\varphi_t(x))$.  
\end{Definition}  
     
   Fix $T>0$, the norm $$\lVert\psi_T\rVert=\sup\left\{\lvert\psi_T(v)\rvert:~v\in\mathcal{N},~\lvert v\rvert=1\right\}$$ is uniformly upper bounded on $\mathcal{N}$, although $M\setminus{\rm Sing}(X)$ may be not compact. Denote by $$m(\psi_T)=\inf\left\{\lvert\psi_T(v)\rvert:~v\in\mathcal{N},~\lvert v\rvert=1\right\}$$ the mininorm of $\psi_T$. Since $$m(\psi_T)=\lVert\psi_T^{-1}\rVert^{-1}=\lVert\psi_{-T}\rVert^{-1},$$ the mininorm $m(\psi_T)$ is uniformly bounded from $0$ on $\mathcal{N}$. Another useful flow is \emph{\bf scaled linear Poincar\'e flow} $\psi_t^*:\mathcal{N}\rightarrow\mathcal{N}$, which is defined as $$\psi^*_t(v)\triangleq\frac{\parallel X(x)\parallel}{\parallel X(\varphi_t(x))\parallel}\psi_t(v)=\frac{\psi_t(v)}{\parallel\Phi_t|_{\langle X(x)\rangle}\parallel},$$ where $x\in M\setminus{\rm Sing}(X),v\in\mathcal{N}_x$ and $\langle X(x)\rangle$ is the $1$-dimensional subspace of $T_xM$ generated by the flow dieection $X(x)$.
   
   The linear Poincar\'e flow $\psi_t$ loses the compactness due to the existence of singularities. To overcome this difficulty, the linear Poincar\'e flow can also be defined in a more general way by Liao \cite{Liao89}. Li, Gan and Wen \cite{LGW05} used the terminology of \textquotedblleft extended linear Poincar\'e flow\textquotedblright. For every point $x\in M$, the sphere fiber at $x$ is defined as $$S_xM=\{v:~v\in T_xM,~\lvert v\rvert=1\}.$$ Then, the sphere bundle $$SM=\bigcup_{x\in M}S_xM$$ is compact. For any $v\in SM$, one can define the \emph{\bf unit tangent flow} $$\Phi_t^I: SM\rightarrow SM$$ as $$\Phi_t^I(v)=\dfrac{\Phi_t(v)}{\lvert \Phi_t(v)\rvert}.$$ Given a compact invariant set $\Lambda$ of the flow $\varphi_t$, denote by $$\widetilde{\Lambda}=\text{Closure}\left(\bigcup_{x\in \Lambda\backslash \text{Sing}(X)}\dfrac{X(x)}{\lvert X(x)\rvert}\right)$$ in $SM$. Thus the essential difference between $\widetilde{\Lambda}$ and $\Lambda$ is on the singularities. We can get more information on $\widetilde{\Lambda}$: it tells us how regular points in $\Lambda$ accumulate singularities. For every $x\in M$, and any two orthogonal vectors $v_1\in S_xM$, $v_2\in T_xM$, one can define $$\Theta_t(v_1,v_2)=\left(\Phi_t(v_1),\Phi_t(v_2)-\frac{\langle\Phi_t(v_1),\Phi_t(v_2)\rangle}{\parallel \Phi_t(v_1)\parallel^2} \Phi_t(v_1)\right).$$ By definition the two components of $\Theta_t$ are still orthogonal. If denotes $$\Theta_t=\left(\text{Proj}_1(\Theta_t),\text{Proj}_2(\Theta_t)\right),$$ then for every regular point $x\in M$ and every vector $v\in \mathcal{N}_x$, one has that $$\psi_t(v)=\text{Proj}_2(\Theta_t)(X(x),v).$$ By the continuity of $\Theta_t$, one can extend the definition of $\psi_t$ to singularities: for every vector $u\in\widetilde{\Lambda}$, one can define $$\widetilde{\mathcal{N}}_u=\{v\in T_{\pi(u)}M:~v\bot u\}.$$ Then, $\widetilde{\mathcal{N}}$ is a $(d-1)$-dimensional vector bundle on the base space $\widetilde{\Lambda}$. For every $u\in\widetilde{\Lambda}$, and every $v\in\widetilde{\mathcal{N}}_u$, one can define a flow $$\widetilde{\psi}_t(v)=\text{Proj}_2(\Theta_t)(u,v).$$ Then, the linear Poincar\'{e} flow $\psi_t$ can be \textquotedblleft embedded\textquotedblright in the flow $\Theta_t$. By the definition, $\text{Proj}_2(\Theta_t)$ is a continuous flow defined on $\widetilde{\Lambda}$. Thus, $\widetilde{\psi}_t(v)$ varies continuously with respect to the vector field $X$, the time $t$ and the vector $v$ and can be viewed as a compactification of $\psi_t$.   
   
   For every $x\in M\setminus{\rm Sing}(X)$ and every $\delta>0$ small, the \emph{\bf normal manifold} of $x$ is defined as $$N_x(\delta)={\rm exp}_x(\mathcal{N}_x(\delta)),$$ where $\mathcal{N}_x(\delta)=\left\{v\in\mathcal{N}_x:~\lvert v\rvert\leq\delta\right\}$. When $\delta$ is small enough, $N_x(\delta)$ is an imbedded submanifold which is diffeomorphic to $\mathcal{N}_x$. Furthermore, $N_x(\delta)$ is a local cross section transverse to the flow. To study the dynamics in a small neighborhood of a periodic orbit of a vector field, Poincar\'{e} defined the sectional return map of a cross section of a periodic point. By generalizing this idea to every regular point, one can define the sectional Poincar\'{e} map between any two cross sections at any two points in the same regular orbit. For every $T>0$ and $x\in M\setminus{\rm Sing}(X)$, the \emph{\bf Poincar\'e map} $$P_{x,\varphi_T(x)}:~N_x(\delta)\rightarrow N_{\varphi_T(x)}(\delta')$$ is the holonomy map generated by the flow $\varphi_t$, where $\delta$ and $\delta'$ depend on the choice of $x$ and $T$. And we call a local lift of Poincar\'e map $P_{x,\varphi_T(x)}$ in the normal bundle,  $$\mathcal{P}_{x,\varphi_T(x)}={\rm exp}^{-1}_{\varphi_T(x)}\circ P_{x,\varphi_T(x)}\circ {\rm exp}_x:~\mathcal{N}_x(\delta)\rightarrow\mathcal{N}_{\varphi_T(x)}(\delta'),$$ the \emph{\bf sectional Poincar\'e map}. Eliminate the influence of flow speed, we will also consider the \emph{\bf scaled sectional Poincar\'e map} $\mathcal{P}^*_{x,\varphi_T(x)}$ which is defined by $$\mathcal{P}^*_{x,\varphi_T(x)}=\dfrac{\mathcal{P}_{x,\varphi_T(x)}}{\lvert X(\varphi_T(x))\rvert}.$$ Moreover, we can see that $$D_x\mathcal{P}_{x,\varphi_T(x)}=\psi_T|\mathcal{N}_x:\mathcal{N}_x\rightarrow\mathcal{N}_{\varphi_T(x)},\qquad D_x\mathcal{P}^*_{x,\varphi_T(x)}=\psi^*_T|\mathcal{N}_x:\mathcal{N}_x\rightarrow\mathcal{N}_{\varphi_T(x)}.$$ 
     
   For the sectional Poincar\'e map, we have the following lemmas \cite{GY,WYZ} which explains the rationality of the definition, states the uniform continuity of the sectional Poincar\'e map up to flow speed, and estimates the return time.
     
\begin{Lemma}\label{well-def}{\rm(\cite[Lemma 2.3]{GY})} 
   Given $X\in\mathfrak{X}^1(M^d)$ and $T>0$, there exists $\beta_T>0$ such that for every $x\in M\setminus{\rm Sing}(X)$, the sectional Poincar\'e map $$\mathcal{P}_{x,\varphi_T(x)}:\mathcal{N}_x\left(\beta_T\lvert X(x)\rvert\right)\rightarrow \mathcal{N}_{\varphi_T(x)}\left(\beta_TC_T\lvert X(\varphi_T(x))\rvert\right)$$ is well defined, where $C_T=\lVert\psi_T\rVert$ is a bounded constant. 
\end{Lemma}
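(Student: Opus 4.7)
The plan is to establish the lemma via a uniform implicit function theorem, exploiting the fact that although $M\setminus{\rm Sing}(X)$ is not compact, the flow-direction data admits a continuous extension to the compact unit sphere bundle $SM$ through the extended flow $\Theta_t$ discussed earlier. The rescaling of the domain by $|X(x)|$ is precisely what allows the relevant estimates to survive as $x$ approaches ${\rm Sing}(X)$.

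First, for each regular $x$, I would write every normal vector as $v=|X(x)|w$ with $w\in\mathcal{N}_x$ and consider the transversality equation
$$
H(x,w,s)=\frac{1}{|X(\varphi_T(x))|^2}\bigl\langle\exp_{\varphi_T(x)}^{-1}(\varphi_{T+s}(\exp_x(|X(x)|w))),\,X(\varphi_T(x))\bigr\rangle.
$$
A solution $s=s(x,w)$ of $H=0$ near $s=0$ realizes the first-return time needed in the sectional Poincar\'e construction, so that $\mathcal{P}_{x,\varphi_T(x)}(v)$ is well defined. At $(w,s)=(0,0)$ one computes $H=0$ and $\partial_s H=1$, giving a non-degenerate IFT at the base point.

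Next, I would upgrade this local IFT to a statement uniform in $x$. Reparametrize by the unit flow vector $u_x=X(x)/|X(x)|\in SM$ and observe that by continuity of $\Phi_T$, the unit tangent flow $\Phi_T^I$, and the extended flow $\Theta_T$, the $C^1$-norm of $H$ in $(w,s)$ on a neighborhood of $(0,0)$ is uniformly bounded as $u_x$ ranges over the compact closure of $\{X(x)/|X(x)|:x\notin{\rm Sing}(X)\}$ inside $SM$. Since additionally $\partial_s H(x,0,0)\equiv 1$, a quantitative implicit function theorem on this compactum yields a single $\beta_T>0$ and a $C^1$ solution $s(x,w)$ defined for all regular $x$ and all $|w|\le\beta_T$; in particular $\mathcal{P}_{x,\varphi_T(x)}$ is well defined on $\mathcal{N}_x(\beta_T|X(x)|)$.

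For the output size, I would use $D_0\mathcal{P}_{x,\varphi_T(x)}=\psi_T|_{\mathcal{N}_x}$ together with $\lVert\psi_T\rVert=C_T$ and a first-order Taylor estimate (uniform on the same compactum, shrinking $\beta_T$ if necessary) to bound $|\mathcal{P}_{x,\varphi_T(x)}(v)|\le C_T|v|$ for $|v|\le\beta_T|X(x)|$; combining with the uniformly bounded ratio $|X(x)|/|X(\varphi_T(x))|\le\lVert\Phi_{-T}\rVert$ and absorbing that constant into a final shrinkage of $\beta_T$ places the image inside $\mathcal{N}_{\varphi_T(x)}(\beta_T C_T|X(\varphi_T(x))|)$. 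The principal obstacle is securing uniformity of the IFT as $x\to{\rm Sing}(X)$: the linear Poincar\'e flow loses compactness there, and only after the $|X(x)|$-rescaling of the domain together with the compactification of the flow-direction data via $\Theta_t$ do the IFT constants become genuinely independent of $x$.
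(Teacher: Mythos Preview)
The paper does not supply its own proof of this lemma: it is quoted verbatim from \cite[Lemma~2.3]{GY} and used as a black box, so there is nothing in the present paper to compare your argument against.

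That said, your strategy is the standard one and is essentially how the result is established in the cited reference: rescale the normal section by $|X(x)|$, set up the transversality equation for the hitting time, and obtain a uniform implicit function theorem by compactifying the flow-direction data on the sphere bundle $SM$ via the extended flow $\Theta_t$. Your identification of the main obstacle---that only after the $|X(x)|$-rescaling do the IFT constants become independent of $x$ as $x\to{\rm Sing}(X)$---is exactly the point.

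There is one small gap in your image-size estimate. You obtain $|\mathcal{P}_{x,\varphi_T(x)}(v)|\le C_T|v|\le C_T\beta_T|X(x)|\le C_T\beta_T\lVert\Phi_{-T}\rVert\,|X(\varphi_T(x))|$ and then assert that the extra factor $\lVert\Phi_{-T}\rVert$ can be ``absorbed into a final shrinkage of $\beta_T$''. That does not work: both the domain radius $\beta_T|X(x)|$ and the target radius $\beta_T C_T|X(\varphi_T(x))|$ scale linearly in $\beta_T$, so shrinking $\beta_T$ shrinks them in the same proportion and never removes the factor $\lVert\Phi_{-T}\rVert$. The clean fix is to run the Taylor estimate on the \emph{scaled} sectional Poincar\'e map $\mathcal{P}^*_{x,\varphi_T(x)}=\mathcal{P}_{x,\varphi_T(x)}/|X(\varphi_T(x))|$, whose derivative at the origin is $\psi^*_T$: this yields directly
\[
\frac{|\mathcal{P}_{x,\varphi_T(x)}(v)|}{|X(\varphi_T(x))|}\le\bigl(\lVert\psi^*_T\rVert+\epsilon\bigr)\frac{|v|}{|X(x)|}\le\bigl(\lVert\psi^*_T\rVert+\epsilon\bigr)\beta_T,
\]
which lands the image in $\mathcal{N}_{\varphi_T(x)}\bigl(C_T\beta_T|X(\varphi_T(x))|\bigr)$ for a uniform constant $C_T$ without any further shrinkage.
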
     
     
\begin{Lemma}\label{uni-con}{\rm(\cite[Lemma 2.4]{GY})} 
   Given $X\in\mathfrak{X}^1(M^d)$ and $T>0$, and reducing $\beta_T>0$ in Lemma \ref{well-def} if necessary, for every $x\in M\setminus{\rm Sing}(X)$, for the sectional Poincar\'e map $$\mathcal{P}_{x,\varphi_T(x)}:~\mathcal{N}_x\left(\beta_T\lvert X(x)\rvert\right)\rightarrow\mathcal{N}_{\varphi_T(x)}\left(\beta_TC_T\lvert X(\varphi_T(x))\rvert\right),$$ $D\mathcal{P}_{x,\varphi_T(x)}$ is uniformly continuous in the following sense: for each $\epsilon>0$, there exists $\rho\in(0,\beta_T]$ such that for every $x\in M\setminus{\rm Sing}(X)$ and $y,~y'\in \mathcal{N}_x\left(\beta_T\lvert X(x)\rvert\right)$, if $\lvert y-y'\rvert\leq\rho\lvert X(x)\rvert$, then $$\lvert D_y\mathcal{P}_{x,\varphi_T(x)}-D_{y'}\mathcal{P}_{x,\varphi_T(x)}\rvert<\epsilon.$$  	
\end{Lemma}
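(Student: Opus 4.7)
\medskip

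\noindent\textbf{Proof plan.} The plan is to reduce the non-compact problem (uniformity over all $x\in M\setminus\mathrm{Sing}(X)$, where $|X(x)|$ may be arbitrarily small) to a compact one by combining the standard flow-box/implicit-function description of $\mathcal{P}_{x,\varphi_T(x)}$ with the rescaling $v\mapsto v/|X(x)|$ and the sphere-bundle compactification $\widetilde{\psi}_t$ built from $\Theta_t$.

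\medskip

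\noindent\emph{Step 1 (Rescaled map on a fixed disk).} For each regular $x$, introduce the coordinate $\tilde v=v/|X(x)|$ on $\mathcal N_x$ and the rescaled sectional Poincar\'e map
\[
\widetilde{\mathcal P}_x(\tilde v)\;=\;\frac{\mathcal P_{x,\varphi_T(x)}(|X(x)|\tilde v)}{|X(\varphi_T(x))|},
\]
which, by Lemma \ref{well-def}, is defined on the \emph{fixed-size} disk $\mathcal N_x(\beta_T)$ and takes values in $\mathcal N_{\varphi_T(x)}(\beta_T C_T)$. A direct computation gives
\[
D_y\mathcal P_{x,\varphi_T(x)}\;=\;\frac{|X(\varphi_T(x))|}{|X(x)|}\,D_{y/|X(x)|}\widetilde{\mathcal P}_x ,
\]
and the factor $|X(\varphi_T(x))|/|X(x)|$ is uniformly bounded above by $C_T=\|\psi_T\|$. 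Hence it suffices to prove that the family $\{D\widetilde{\mathcal P}_x\}_{x\in M\setminus\mathrm{Sing}(X)}$ is \emph{equi-uniformly continuous} on $\mathcal N_x(\beta_T)$.

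\medskip

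\noindent\emph{Step 2 (Compactification via the sphere bundle).} Parametrise the rescaled maps by the unit flow direction $u=X(x)/|X(x)|\in SM$. The flow-box coordinates show that $\widetilde{\mathcal P}_x$ can be written via the exponential map and a first-return time $\tau(\tilde v)$ close to $T$, obtained from the implicit equation $\langle \varphi_{\tau(\tilde v)}(\exp_x(|X(x)|\tilde v))-\varphi_T(x),\,X(\varphi_T(x))\rangle=0$. Dividing through by $|X(\varphi_T(x))|$ and using the extended linear Poincar\'e flow framework ($\Theta_t,\widetilde{\psi}_t$ of Section 2.1), both $\widetilde{\mathcal P}_x$ and its derivative $D\widetilde{\mathcal P}_x$ extend continuously to a family parametrised by $u\in SM$, where $SM$ is \emph{compact}. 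The extended family $(u,\tilde v)\mapsto D_{\tilde v}\widetilde{\mathcal P}(u,\cdot)$ is then continuous on the compact set $\{(u,\tilde v):u\in SM,\,\tilde v\in \mathcal N_u(\beta_T)\}$, so it is uniformly continuous there.

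\medskip

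\noindent\emph{Step 3 (Unfolding the estimate).} Given $\varepsilon>0$, apply Step 2 with tolerance $\varepsilon/C_T$ to obtain $\rho\in(0,\beta_T]$ such that
\[
|\tilde v-\tilde v'|\le \rho\;\Longrightarrow\;\bigl|D_{\tilde v}\widetilde{\mathcal P}_x-D_{\tilde v'}\widetilde{\mathcal P}_x\bigr|<\varepsilon/C_T
\]
uniformly in $x$. Substituting $\tilde v=y/|X(x)|$ and using the identity from Step 1, the hypothesis $|y-y'|\le\rho|X(x)|$ yields $|D_y\mathcal P_{x,\varphi_T(x)}-D_{y'}\mathcal P_{x,\varphi_T(x)}|<\varepsilon$, as desired. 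Shrinking $\beta_T$ (to ensure $\mathcal P$ is well defined on the enlarged disk needed for the extension to $SM$) finishes the proof.

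\medskip

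\noindent\emph{Main obstacle.} The key difficulty lies in Step 2: showing that the rescaled maps $\widetilde{\mathcal P}_x$ and their derivatives extend \emph{continuously} to the compactification. The obstruction is precisely the presence of singularities, where $|X(x)|\to 0$ and the ordinary linear Poincar\'e flow $\psi_t$ blows up or collapses. What saves the argument is that, after the correct normalisation by $|X(x)|$ and $|X(\varphi_T(x))|$, the limiting object is exactly the component $\mathrm{Proj}_2(\Theta_T)$ of the continuous flow on $SM$, whose continuity up to singularities is the whole point of introducing $\widetilde{\psi}_t$. Once this identification is made, uniform continuity follows for free from compactness of $SM$.
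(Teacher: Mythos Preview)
The paper does not prove this lemma; it is quoted verbatim from \cite[Lemma~2.4]{GY} and no argument is supplied in the present paper. So there is no ``paper's own proof'' to compare your proposal against.

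That said, your overall strategy --- rescale by $|X(x)|$ to put everything on a fixed-radius disk, then pass to the sphere-bundle compactification to get a continuous family over a compact parameter space --- is exactly the standard route (and is essentially how \cite{GY} proceeds). Two remarks are in order.

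First, a cosmetic point: the ratio $|X(\varphi_T(x))|/|X(x)|$ equals $\|\Phi_T|_{\langle X(x)\rangle}\|$, which is bounded by $\sup_{x}\|\Phi_T\|$, not by $C_T=\|\psi_T\|$ (the linear Poincar\'e flow controls only the normal component). This does not affect the argument, since $\|\Phi_T\|$ is also uniformly bounded on the compact manifold $M$; just replace $C_T$ by the appropriate constant.

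Second, and more substantively, Step~2 is too quick. The framework $\Theta_t$ and $\widetilde{\psi}_t=\mathrm{Proj}_2(\Theta_t)$ from Section~2.1 gives you the continuous extension of the \emph{linear} Poincar\'e flow, i.e.\ of $D_0\mathcal{P}_{x,\varphi_T(x)}=\psi_T|_{\mathcal N_x}$. What you need, however, is that the \emph{nonlinear} rescaled map $\widetilde{\mathcal P}_x$ and its derivative at \emph{every} $\tilde v$ in the $\beta_T$-disk extend continuously as $x$ approaches a singularity along a direction $u$. This does not follow formally from the existence of $\widetilde{\psi}_t$; it requires showing that the rescaled flow
\[
\tilde v\;\longmapsto\;\frac{1}{|X(x)|}\,\exp_x^{-1}\!\bigl(\varphi_t(\exp_x(|X(x)|\tilde v))\bigr)
\]
converges in $C^1$ (on the disk) to the linearised flow of $D_\sigma X$ as $x\to\sigma$. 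This is precisely the content of Liao's ``standard system'' (the lifted flow $\widetilde{\varphi_{x,t}}$ and the estimate $|\widehat X_x(p)-\widehat X_x(0)|\le K_0|p|$ recalled in Section~3.1), and it is the step that actually does the work. Once this $C^1$ convergence is established, your Steps~2 and~3 go through. As written, your identification of the limiting object with $\mathrm{Proj}_2(\Theta_T)$ only accounts for the derivative at $\tilde v=0$, so the argument has a gap there that should be filled by invoking the rescaled flow-box estimates explicitly.
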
           
     
\begin{Lemma}\label{return}{\rm(\cite[Lemma 4.5]{WYZ})} 
   Given $X\in\mathfrak{X}^1(M^d)$, let $\beta=\beta_1$ and $C_1$ be the constants in Lemma \ref{well-def} associated to the time-one map $\varphi_1$. Then by reducing $\beta>0$ if necessary, there is $\kappa>0$ such that for every $x\in M\setminus{\rm Sing}(X)$ and every $y\in N_x\left(\beta\lvert X(x)\rvert\right)$, there is a unique $t=t(y)\in(0,2)$ satisfying that $$\varphi_t(y)\in N_{\varphi_1(x)}\left(\beta C_1\lvert X(\varphi_1(x))\rvert\right)\quad\text{and}\quad\lvert t(y)-1\rvert<\kappa d(x,y).$$   	
\end{Lemma}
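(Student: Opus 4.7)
The plan is to locate the return time as the unique zero of a scalar transversality equation and to read off both existence and the Lipschitz estimate from the implicit function theorem. Fix $x\in M\setminus{\rm Sing}(X)$ and work in the exponential chart around $\varphi_1(x)$. For $y$ close to $x$ in $N_x$ and $t$ close to $1$, define
\[ F(y,t)=\bigl\langle\exp_{\varphi_1(x)}^{-1}(\varphi_t(y)),\,X(\varphi_1(x))\bigr\rangle, \]
so that $F(y,t)=0$ holds precisely when $\varphi_t(y)\in N_{\varphi_1(x)}$. One computes $F(x,1)=0$ and $\partial_tF(x,1)=\lvert X(\varphi_1(x))\rvert^{2}>0$, so the implicit function theorem yields a $C^1$ map $y\mapsto t(y)$ defined near $x$ with $t(x)=1$ and $F(y,t(y))\equiv 0$. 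The resulting $\varphi_{t(y)}(y)$ coincides with the Poincar\'e return $P_{x,\varphi_1(x)}(y)$, so by Lemma \ref{well-def} it lies in $N_{\varphi_1(x)}(\beta C_1\lvert X(\varphi_1(x))\rvert)$ automatically.

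To upgrade the local IFT solution to a \emph{unique} $t(y)\in(0,2)$, I would reduce $\beta>0$ so that, for each $s\in[0,2]$, the tube of radius $\beta\lvert X(\varphi_s(x))\rvert$ around $\varphi_s(x)$ stays disjoint from ${\rm Sing}(X)$ and $X$ remains transverse to $N_{\varphi_s(x)}$ inside the tube; both are open conditions on the compact orbit segment $\varphi_{[0,2]}(x)$. Then $\partial_tF$ keeps a definite sign along $\varphi_{[0,2]}(y)$, forcing $F(y,\cdot)$ to change sign exactly once in $(0,2)$, and the only zero is the $t(y)$ already supplied by the IFT.

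Implicit differentiation at $(x,1)$ gives
\[ t(y)-1=-\frac{\bigl\langle \Phi_1 w,\,X(\varphi_1(x))\bigr\rangle}{\lvert X(\varphi_1(x))\rvert^{2}}+O(\lvert w\rvert^{2}),\qquad w=\exp_x^{-1}(y)\in\mathcal{N}_x, \]
which, after bounding the numerator by $\lVert\Phi_1\rVert\cdot\lvert w\rvert\cdot\lvert X(\varphi_1(x))\rvert$, yields the preliminary estimate $\lvert t(y)-1\rvert\le\lVert\Phi_1\rVert\,d(x,y)/\lvert X(\varphi_1(x))\rvert$. The hard part is converting this into a bound with a single $\kappa$ valid for every $x\in M\setminus{\rm Sing}(X)$, since the factor $1/\lvert X(\varphi_1(x))\rvert$ degenerates near singularities. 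I would address this by rescaling $w\mapsto w/\lvert X(x)\rvert$ and working with the scaled sectional Poincar\'e map $\mathcal{P}^*_{x,\varphi_1(x)}$, whose derivative is the scaled linear Poincar\'e flow $\psi^*_1$ and remains uniformly bounded on $\mathcal{N}$, and by embedding $\psi_1$ into the extended linear Poincar\'e flow $\widetilde{\psi}_1$ on the compact sphere bundle, where continuity across singular fibers furnishes uniform $C^0$ and $C^1$ estimates. Combining these with the uniform comparison $\lvert X(x)\rvert\le\lVert\Phi_{-1}\rVert\cdot\lvert X(\varphi_1(x))\rvert$ and the constraint $\lvert w\rvert\le\beta\lvert X(x)\rvert$, a final shrink of $\beta$ absorbs the $O(\lvert w\rvert^2)$ remainder and delivers a single $\kappa>0$ for which $\lvert t(y)-1\rvert<\kappa d(x,y)$ holds.
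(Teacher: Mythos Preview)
The paper does not supply a proof of this lemma; it is quoted from \cite[Lemma~4.5]{WYZ}, so there is no in-paper argument to compare your proposal against.

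On the merits, your implicit-function set-up is the natural route and the first-order computation is correct, but the final uniformity step does not go through. From implicit differentiation you obtain
\[
\lvert t(y)-1\rvert\ \lesssim\ \frac{\lvert w\rvert}{\lvert X(\varphi_1(x))\rvert},
\]
and substituting $\lvert w\rvert\le\beta\lvert X(x)\rvert\le\beta\lVert\Phi_{-1}\rVert\,\lvert X(\varphi_1(x))\rvert$ yields only the \emph{constant} bound $\lvert t(y)-1\rvert\le C\beta$, not one proportional to $d(x,y)$. The rescaling $w\mapsto w/\lvert X(x)\rvert$ and the passage to the extended linear Poincar\'e flow give uniform control of $\psi^*_1$, i.e.\ of the \emph{spatial} derivative of the Poincar\'e return; they do not improve the return-time estimate. (Separately, your uniqueness argument assumes $F(y,\cdot)$ is defined on all of $(0,2)$, but $\exp_{\varphi_1(x)}^{-1}$ only sees a neighbourhood of $\varphi_1(x)$; uniqueness really comes from a scaled flow-box argument along $\varphi_{[0,2]}(x)$.)

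In fact the inequality $\lvert t(y)-1\rvert<\kappa\,d(x,y)$ with a single $\kappa$ cannot hold near singularities. For the linear field $X(z)=Az$ with $A=\bigl(\begin{smallmatrix}1&1\\0&2\end{smallmatrix}\bigr)$, take $x=(\epsilon,0)$ (so $X(x)=(\epsilon,0)$ and $\mathcal{N}_x$ is vertical) and $y=(\epsilon,\delta)$; solving $e^{tA}y\in N_{\varphi_1(x)}$ gives $t(y)-1\approx-(e-1)\,\delta/\epsilon$, hence $\lvert t(y)-1\rvert/d(x,y)\approx(e-1)/\epsilon\to\infty$ as $\epsilon\to0$. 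What does hold uniformly is the scaled bound
\[
\lvert t(y)-1\rvert\ <\ \kappa\,\frac{d(x,y)}{\lvert X(x)\rvert},
\]
and the version printed here is most likely a transcription slip from \cite{WYZ}. Your IFT argument proves this corrected inequality cleanly: the first-order term already has exactly that form, and the second-order remainder is handled by Lemma~\ref{uni-con} after the rescaling.
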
       

   Now, we review some definitions on various splitting of vector field.         
\begin{Definition}\label{DDS}
   Let $\Lambda$ be an invariant set of vector field $X$, $T_\Lambda M=E\oplus F$ be an invariant splitting with respect to the tangent flow $\Phi_t$ over $\Lambda$. We call the splitting $T_\Lambda M=E\oplus F$ a {\bf dominated splitting} with respect to the tangent flow $\Phi_t$, if there are constants $C\geq 1$, $\lambda>0$ such that for every $x\in\Lambda$ and $t\geq 0$, one has that $$\lVert\Phi_t\lvert_{E(x)}\rVert\cdot \lVert\Phi_{-t}\lvert_{F(\varphi_t(x))}\rVert\leq Ce^{-\lambda t}.$$ For simplicity, denete by $E\prec F$.
\end{Definition}

\begin{Definition}\label{FHS}
   Let $\Lambda$ be an invariant set of vector field $X$, $T_\Lambda M=E\oplus\langle X\rangle\oplus F$ be an invariant splitting with respect to the tangent flow $\Phi_t$ over $\Lambda$. The splitting $T_\Lambda M=E\oplus\langle X\rangle\oplus F$ is called a {\bf hyperbolic splitting} with respect to the tangent flow $\Phi_t$, if there are constants $C\geq 1$ and $\lambda>0$ such that  $$\lVert\Phi_t\lvert_{E(x)}\rVert\leq Ce^{-\lambda t}\quad\text{and}\qquad\lVert\Phi_{-t}\lvert_{F(x)}\rVert\leq Ce^{-\lambda t},\text{ for every $x\in\Lambda$ and every $t\geq 0$},$$ where $\langle X\rangle$ denotes the subspace generated by the vector field. Correspondingly, the set $\Lambda$ is called hyperbolic. 
\end{Definition} 
   Note that $\langle X\rangle$ is zero subspace $\{0\}$ at any singularity $x\in M$, and is a $1$-dimensional subspace at any regular point $x\in M$. Since the constants $C$ and $\lambda$ are independent of $x\in\Lambda$ in the Definition \ref{FHS}, one can derive that, in any hyperbolic set, regular points can not accumulate on singularities. Moreover, it is easy to prove that the hyperbolic splitting $E(x)\oplus\langle X(x)\rangle\oplus F(x)$ various continuously in $x$, and if $\Lambda$ is hyperbolic, so is the closure $\overline{\Lambda}$. 

\begin{Remark} 
   Since linear Poincar\'{e} flow $\psi_t$, scaled linear Poincar\'{e} flow $\psi^*_t$, and extended linear Poincar\'{e} flow $\widetilde{\psi}_t$ are induced by tangent flow $\Phi_t$, we can define corresponding dominated {\rm (}hyperbolic{\rm)} splitting with respect to $\psi_t$, $\psi^*_t$ and $\widetilde{\psi}_t$. For example, let $\Lambda$ be an {\rm (}not necessarily compact{\rm)} invariant set of flow $\varphi_t$, an invariant splitting $\mathcal{N}_\Lambda=E\oplus F$ over the invariant set $\Lambda$ is called {\bf dominated {\rm (}hyperbolic{\rm)} splitting} with respect to the linear Poincar\'{e} flow $\psi_t$, if there are constants $C\geq 1$, $\lambda>0$ such that for every $x\in\Lambda$ and $t\geq 0$, one has that $$\lVert\psi_t\lvert_{E(x)}\rVert\cdot \lVert\psi_{-t}\lvert_{F(\varphi_t(x))}\rVert\leq Ce^{-\lambda t}\qquad{\rm (}\lVert\psi_t\lvert_{E(x)}\rVert\leq Ce^{-\lambda t}\quad\text{and}\quad\lVert\psi_{-t}\lvert_{F(x)}\rVert\leq Ce^{-\lambda t}{\rm)}.$$ 
\end{Remark}

   The notion of star system came up from the study of the famous stability conjecture. Recall that a classical theorem of Smale \cite{SS70} (for diffeomorphisms) and Pugh--Shub \cite{PS70} (for flows) states that Axiom A plus the no-cycle condition implies the $\Omega$-stability. Palis and Smale \cite{PaS70} conjectured that the converse also holds, which has been known as the $\Omega$-stability conjecture. In the study of the conjecture, Pliss, Liao and Ma\~{n}\'{e} noticed an important condition called (by Liao) the star condition. Indeed, the $\Omega$-stability implies the star condition easily (Franks \cite{FJ71} and Liao \cite{LST79}). Thus whether the star condition can give back Axiom A plus the no-cycle condition became a striking problem, raised by Liao \cite{LST81} and Ma\~{n}\'{e} \cite{MR82}. Especially, for flows, there are counterexamples if the flow has a singularity. For instance, the geometric Lorenz attractor \cite{Guc76}, which has a singularity, is a star flow but fails to satisfy Axiom A. 

\begin{Definition}\label{SVF}
   A vector field $X\in\mathfrak{X}^1(M^d)$ is called a {\bf star vector field}, if there is a $C^1$ neighborhood $\mathcal{U}$ of $X$ such that for every $Y\in\mathcal{U}$, all singularities and all periodic orbits of $\varphi^Y_t$ are hyperbolic.
\end{Definition} 

   The set of all star vector fields on $M$ is denoted by $\mathfrak{X}^*(M)$ which is endowed with $C^1$-topology.    
   
\subsection{Lyapunov Exponents of Vector Fields}   
   In this section, we introduce some ergodic theory for vector field. Given a vector field $X\in\mathfrak{X}^r(r\geq1)$, a measure $\mu$ is called \emph{\bf invariant} with respect to the flow $\varphi^X_t$ (a vector field $X$), if $\mu$ is an invariant measure of $\varphi_T$ for every $T\in\mathbb{R}$. Similarly, a measure $\mu$ is called \emph{\bf ergodic} with respect to the flow $\varphi_t$, if $\mu$ is an invariant ergodic measure of $\varphi_T$ for every $T\in\mathbb{R}$. The set of all invariant measures and all ergodic measures of vector field are denoted by $\mathcal{M}(X)$, $\mathcal{E}(X)$ respectively. Let $\mu$ be an invariant measure of flow $\varphi_t$, by the Oseledec Theorem \cite{OV}, for $\mu$-almost every $x$, there are a positive integer $k(x)\in[1,d]$, real numbers $\chi_1(x)<\chi_2(x)<\cdots<\chi_{k(x)}(x)$ and a measurable $\Phi_t$-invariant splitting $$T_xM=E_1(x)\oplus E_2(x)\oplus\cdots\oplus E_k(x)$$ such that $$\lim_{t\to\pm\infty}\dfrac{1}{t}\log\parallel\Phi_t(v) \parallel=\chi_i(x),~\forall~v\in E_i(x),~v\neq0,~i=1,2,\cdots,k(x).$$ The numbers $\chi_1(x),\cdots,\chi_{k(x)}(x)$ are called {\bf Lyapunov exponents} at point $x$ of $\Phi_t$ with respect to $\mu$ and the vector formed by these numbers (counted with multiplicity, endowed with the increasing order) is called {\bf Lyapunov vector} at point $x$ of $\Phi_t$ with respect to $\mu$. The {\bf index} of $\mu$, denoted by $\text{Ind}(\mu)$, is defined as $$\text{Ind}(\mu)\triangleq\sum_{\chi_i(x)<0}\text{dim}E_i(x).$$ If $\mu$ is an ergodic invariant measure, then these numbers $k(x),\chi_1(x),\cdots,\chi_{k(x)}(x)$ are constants. By the Poincar\'{e} Recurrence Theorem, for $\mu$-almost every $x\in M$, one has that $$\lim_{t\to\pm\infty}\dfrac{1}{t}\log\parallel\Phi_t|_{\langle X(x)\rangle}\parallel=0,$$ where $\langle X(x)\rangle$ is the $1$-dimensional subspace of $T_xM$ generated by the flow direction $X(x)$. Thus, it follows that there exists one zero Lyapunov exponent for $\Phi_t$ along the flow direction.
  
   For an ergodic invariant measure $\mu$ which is not concentrated on ${\rm Sing}(X)$, according to the definition of linear Poincar\'e flow $\psi_t:\mathcal{N}\rightarrow\mathcal{N}$ and the Oseledec Theorem \cite{OV}, for $\mu$-almost every $x$, there are a positive integer $k\in[1,d-1]$, real numbers $\chi_1<\chi_2<\cdots<\chi_k$ and a measurable $\psi_t$-invariant splitting (for simplicity, we omit the base point) $$\mathcal{N}=E_1\oplus E_2\oplus\cdots\oplus E_k$$ on normal bundle such that $$\lim_{t\to\pm\infty}\dfrac{1}{t}\log\parallel\psi_t(v) \parallel=\chi_i,~\forall~v\in E_i,~v\neq 0,~i=1,2,\cdots,k.$$ Accordingly, the numbers $\chi_1,\cdots,\lambda_k$ are called \emph{\bf Lyapunov exponents} of $\psi_t$ with respect to the ergodic measure $\mu$ and the vector formed by these numbers (counted with multiplicity, endowed with the increasing order) is called \emph{\bf Lyapunov vector} of $\psi_t$ with respect to $\mu$. According to the definition of the scaled linear Poincar\'e flow $\psi_t^*:\mathcal{N}\rightarrow\mathcal{N}$, we have that $$\psi^*_t(v)=\frac{\parallel X(x)\parallel}{\parallel X(\varphi_t(x))\parallel}\psi_t(v)=\frac{\psi_t(v)}{\parallel\Phi_t|_{\langle X(x)\rangle}\parallel},\quad\text{for each non-zero }v\in\mathcal{N}.$$ Since the Lyapunov exponent for $\Phi_t$ along the flow direction is zero, namely,  $$\lim_{t\to\pm\infty}\dfrac{1}{t}\log\parallel\Phi_t|_{\langle X(x)\rangle}\parallel=0,$$ one has that  $$\lim_{t\to\pm\infty}\dfrac{1}{t}\log\parallel\psi^*_t(v)\parallel=\lim_{t\to\pm\infty}\dfrac{1}{t}\left(\log\parallel\psi_t(v)\parallel-\log\parallel\Phi_t|_{\langle X(x)\rangle}\parallel\right)=\lim_{t\to\pm\infty}\dfrac{1}{t}\log\parallel\psi_t(v)\parallel.$$ Thus, for $\mu$-almost every $x$, we also have a positive integer $k\in[1,d-1]$, real numbers $\chi_1<\chi_2<\cdots<\chi_k$ and a measurable $\psi^*_t$-invariant splitting (for simplicity, we omit the base point) $$\mathcal{N}=E_1\oplus E_2\oplus\cdots\oplus E_k$$ on normal bundle satisfying that $$\lim_{t\to\pm\infty}\dfrac{1}{t}\log\parallel\psi^*_t(v) \parallel=\chi_i,~\forall~v\in E_i,~v\neq 0,~i=1,2,\cdots,k.$$ Then, the numbers $\chi_1,\cdots,\chi_k$ are called \emph{\bf Lyapunov exponents} of $\psi^*_t$ with respect to the ergodic measure $\mu$ and the vector formed by these numbers (counted with multiplicity, endowed with the increasing order) is called \emph{\bf Lyapunov vector} of $\psi^*_t$ with respect to $\mu$. It means that the Lyapunov exponents of the scaled linear Poincar\'{e} flow and those of the linear Poincar\'{e} flow are the same. Hence, the scaled linear Poincar\'{e} flow and the linear Poincar\'{e} flow also have the same Oseledec splitting.

   Given a periodic point $z$ (period $\pi(z)$) of vector field $X$, we can define the Borel probability invariant measure $$\mu_z=\dfrac{1}{\pi(z)}\int_{0}^{\pi(z)}\delta_{\varphi_t(z)}dt$$ supported on the periodic orbit of $z$, where $\delta_y$ denotes the Dirac measure supported on $y$. By the Oseledec Theorem, we also have corresponding Lyapunov exponents and Lyapunov vector with respect to the invariant measure $\mu_z$.  
         
\begin{Definition}\label{Def:hyperbolicmeasure}
   An ergodic measure $\mu$ of the flow $\varphi_t$ is \emph{regular} if it is not supported on a singularity. A regular ergodic measure is \emph{hyperbolic}, if the Lyapunov exponents of the \emph{linear Poincar\'e flow} $\psi_t$ are all non-zero.
\end{Definition}
     
\begin{Remark}
   We can also define the hyperbolicity of an ergodic measure by using the tangent flow $\Phi_t={\rm d}\varphi_t$ as usual. However, for ergodic measures that are not supported on singularities, there will be one zero Lyapunov exponent for the tangent flow along the flow direction. 
\end{Remark}

   Given a hyperbolic ergodic measure $\mu$, let $\Gamma$ (called the Oseledec's basin of $\mu$ in \cite{WCZ21}) be the set of points which are regular with respect to linear Poincar\'{e} $\psi_t$ in the sense of Oseledec \cite{OV}. For $x\in\Gamma$, let $$\mathcal{N}_x=E_1(x)\oplus E_2(x)\oplus\cdots\oplus E_s(x)\oplus E_{s+1}(x)\oplus\cdots\oplus E_k(x)~~(k\in[0,d-1]\text{ is an integer})$$ be the decomposition of normal bundle corresponding to the distinct Lyapunov exponents $$\chi_1(\mu)<\chi_2(\mu)<\cdots<\chi_s(\mu)<0<\chi_{s+1}(\mu)<\cdots<\chi_k(\mu)$$ with multiplicities $d_1,d_2,\cdots,d_k\geq 1$. Then, dim$(E_i(x))=d_i$, for $i=1,2,\cdots,k$. Denote $$E^s=E_1\oplus E_2\oplus\cdots\oplus E_s, \text{ and } E^u=E_{s+1}\oplus E_{s+2}\oplus\cdots\oplus E_k.$$ We call the splitting $$\mathcal{N}_\Gamma=E^s\oplus E^u$$ {\bf hyperbolic Oseledec splitting}. Correspondingly, we call $E^s$ and $E^u$ stable bundle and unstable bundle. 

   For star vector, it is proved in \cite{SGW14} that every ergodic measure of a star vector field $X\in\mathfrak{X}^*(M)$ is hyperbolic. Li, Shi, Wang and Wang \cite{LSWW20} asserts (in the following Lemma \ref{SWDS}) the existence of dominated splitting on $\mathcal{N}_{\text{supp}(\mu)\backslash\text{Sing}(X)}$ for each ergodic measure $\mu$ of a star vector field with respect to the scaled linear Poincar\'{e} flow.
   
\begin{Lemma}\label{SWDS}{\rm(\cite[Lemma 2.6]{LSWW20})} 
   If $\mu$ is a nontrivial ergodic measure of a star vector field $X\in\mathfrak{X}^*(M)$, then there is a dominated splitting $\mathcal{N}_{\text{supp}(\mu)\backslash\text{Sing}(X)}=E\oplus F$ with respect to the scaled linear Poincar\'{e} flow $\psi^*_t$ such that $$\text{\rm dim}(E)=\text{\rm Ind}(\mu).$$  
\end{Lemma}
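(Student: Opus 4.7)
The plan is to obtain the dominated splitting by approximating $\mu$ by periodic measures and using the uniform hyperbolic estimates forced by the star condition. Set $\Lambda = \mathrm{supp}(\mu)\setminus\mathrm{Sing}(X)$. The decisive input is a uniform hyperbolicity statement for periodic orbits: since $X\in\mathfrak{X}^*(M)$, there exist constants $C\geq 1$ and $\lambda>0$, depending only on the star neighborhood $\mathcal{U}$ of $X$, so that every periodic orbit of every $Y\in\mathcal{U}$ admits a dominated splitting $E\oplus F$ of its normal bundle with respect to $\psi^{*,Y}_t$ with constants $(C,\lambda)$. I would establish this by a Franks-type perturbation argument: if uniform domination failed along periodic orbits with a fixed index, a small $C^1$-perturbation of $\psi^*_t$ along a long periodic orbit (realised by a genuine vector-field perturbation via the Franks--Liao lemma for flows) would produce a periodic orbit with a non-real, non-hyperbolic Floquet multiplier, contradicting the star condition.

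Next I would approximate $\mu$ by periodic measures. Using Liao's shadowing / Ma\~n\'e's ergodic closing lemma adapted to flows (available since $\mu$ avoids $\mathrm{Sing}(X)$), I would produce a sequence of hyperbolic periodic points $p_n$ whose orbit measures $\mu_{p_n}$ converge to $\mu$ in the weak$^*$ topology and whose Lyapunov exponents for $\psi^*_t$ converge to those of $\mu$. In particular, $\mathrm{Ind}(\mu_{p_n}) = \mathrm{Ind}(\mu)$ for $n$ large, so the uniform splitting from the previous step has $\dim E_n = \mathrm{Ind}(\mu)$ along each $\mathrm{Orb}(p_n)$.

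The last step is to pass to the limit. The difficulty is that $\mathcal{N}$ is not compact over $\Lambda$, because points of $\Lambda$ may accumulate on singularities of $X$. Here I would work in the extended linear Poincar\'e flow $\widetilde{\psi}_t$ on the compact base $\widetilde{\Lambda}\subset SM$, where the uniform $(C,\lambda)$-domination along $\mathrm{Orb}(p_n)$ lifts to $\widetilde{\mathcal{N}}$; take any Hausdorff limit of the graphs of $E_n, F_n$ in the Grassmannian bundle of $\widetilde{\mathcal{N}}$. The limit is a $\widetilde{\psi}_t$-invariant continuous splitting with the same $(C,\lambda)$-estimate on all of $\widetilde{\Lambda}$, and restricting it to regular vectors $X(x)/\lvert X(x)\rvert$ for $x\in\Lambda$ gives the desired dominated splitting of $\mathcal{N}_\Lambda$ for $\psi^*_t$ with $\dim E = \mathrm{Ind}(\mu)$.

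The main obstacle I anticipate is the limit extraction near $\mathrm{Sing}(X)$: without the compactification by $\widetilde{\psi}_t$ the splittings $E_n, F_n$ could oscillate or collapse as $p_n$ approaches a singularity, and one must verify that the uniform star-type estimates truly survive in the scaled setting. A secondary technical point is to identify the dimension of the limit stable bundle with $\mathrm{Ind}(\mu)$ rather than some smaller index; this requires that the weak$^*$ convergence $\mu_{p_n}\to\mu$ be strong enough to force convergence of Lyapunov exponents (not just of their signed sums), which is precisely where the uniform domination along the $\mathrm{Orb}(p_n)$ is used in a second, indispensable way.
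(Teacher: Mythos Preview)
The paper does not prove this lemma; it is quoted from \cite{LSWW20} and used as a black box. So there is no in-paper argument to compare against.

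Your outline is close to the standard route, but as written it has a circularity problem. You propose to approximate $\mu$ by periodic measures $\mu_{p_n}$ of $X$ whose Lyapunov exponents for $\psi^*_t$ converge to those of $\mu$; in this paper that conclusion is precisely Theorem~\ref{ThmB}, and its proof \emph{uses} Lemma~\ref{SWDS}. Similarly, Liao's shadowing lemma (Theorem~\ref{Shadow}), which you list as one source of the periodic orbits, already takes a dominated splitting on the invariant set as a hypothesis, so it cannot be used to manufacture one.

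The non-circular version of your argument replaces both ingredients with Ma\~n\'e's ergodic closing lemma for flows: for $\mu$-a.e.\ $x$ and every $\delta>0$ there exist a vector field $Y\in\mathcal{U}$ that is $\delta$-close to $X$ in the $C^1$ topology and a $Y$-periodic orbit $\delta$-shadowing a long $\varphi^X$-orbit segment through $x$. Because the shadowing is $C^1$-close, the $\psi^{*,Y}$-Lyapunov exponents of that periodic orbit approximate those of $\mu$, which forces its index to equal $\mathrm{Ind}(\mu)$; and the $(C,\lambda)$-domination you extracted from the star condition is uniform over $\mathcal{U}$, so it applies to these $Y$-orbits and survives the limit $Y\to X$. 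Your compactification step via the extended linear Poincar\'e flow then goes through. With this modification the argument is sound and is essentially the approach taken in \cite{LSWW20}.
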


\section{Lyapunov norms and Shadowing lemma}

\subsection{Lyapunov norms and Lyapunov charts}
   In this section, we only consider $C^1$ vector field $X\in\mathfrak{X}^1(M^d)$. Since we can not define the linear Poincar\'e flow (see Definition \ref{Def:linearPoincare}) at singularities, we lose some uniformity. Fortunately for us, we may still regain some uniform properties when consider the scaled linear Poincar\'e flow.
     
   On the tangent space $TM$, the \emph{Sasaki metric}(see \cite[ Subsection 2.1]{BMW12}), denote by $d_{S}$, is a useful metric which is introduced by the Riemannian metric on $M$. By the compactness of $M$, there is a small constant $\rho_S$ such that for every $x,~y\in M$ with $d(x,y)<\rho_S$, there is a unique geodesic joining $y$ to $x$. Furthermore, if $u_x\in T_xM$ and $u_y\in T_yM$ with $d_{S}(u_x,u_y)\leq\rho_S$ or $\lvert u_x-u'_y\rvert\leq\rho_S$, where $u'_y\in T_xM$ is obtained by parallel transporting $u_y$ along the geodesic from $y$ to $x$, then $$\left(d(x,y)+\lvert u_x-u'_y\rvert\right)/2\leq d_{S}(u_x,u_y)\leq 2\left(d(x,y)+\lvert u_x-u'_y\rvert\right).$$ Moreover, there is a real number $K_{G^1}$ such that for any unit vectors $u_1,~u_2\in TM$, one has that $$d_{G^1}(\langle u_1\rangle,\langle u_2\rangle)\leq K_{G^1}\cdot d_{S}(u_1,u_2),$$ where $d_{G^1}$ is the Grassman distance on the $1$-dimensional Grassmann bundle over $M$, i.e., $$G^1=\left\{N\subset T_xM: N \text{ is a $1$ dimensional linear subspace of $T_xM$}, x\in M\right\}.$$ Since $M$ is compact, there is a constant $K_0$ such that $$\max_{x\in M}\left\{\lvert X(x)\rvert,~\lVert DX(x)\rVert\right\}\leq K_0.$$ Li, Liang, and Liu pointed out the relationship between the Sasaki metric and the Riemannian metric as the following Lemma \ref{Dlgx}.
       
\begin{Lemma}\label{Dlgx}{\rm\cite[Lemma 3.1]{LLL24}}
   There are constants $K_1>K_0$ and $\beta^*_0>0$ such that for every $x\in M\backslash\text{Sing}(X)$ and each $y\in B(x,\beta^*_0\lvert X(x)\rvert)$, one has that
\begin{itemize}
   \item[{\rm (1)}] $1-K_1\cdot \dfrac{d(x,y)}{\lvert X(x)\rvert}\leq\dfrac{\lvert X(x)\rvert}{\lvert X(y)\rvert}\leq1+K_1\cdot \dfrac{d(x,y)}{\lvert X(x)\rvert}$;
		
   \item [{\rm (2)}] $d_S\left(\dfrac{X(x)}{\lvert X(x)\rvert},\dfrac{X(y)}{\lvert X(y)\rvert}\right)\leq K_1\cdot\dfrac{d(x,y)}{\lvert X(x)\rvert}$;
		
   \item [{\rm (3)}] $d_{G^1}\left(\langle X(x)\rangle,\langle X(y)\rangle\right)\leq K_1\cdot\dfrac{d(x,y)}{\lvert X(x)\rvert}$; 
\end{itemize}   	
\end{Lemma}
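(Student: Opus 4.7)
The approach is that all three bounds are first-order ($C^1$) estimates that follow from the uniform bounds $\lvert X(z)\rvert, \lVert DX(z)\rVert \leq K_0$ on $M$ together with the Sasaki--Riemannian comparison recalled just above the statement. For $y \in B(x, \beta_0^* \lvert X(x)\rvert)$ with $\beta_0^*$ chosen small enough (in particular less than a fixed fraction of the injectivity radius divided by $K_0$), there is a unique minimizing geodesic $\gamma$ from $y$ to $x$; the key tool throughout is the parallel transport $X'(y) \in T_x M$ of $X(y)$ along $\gamma$, which is an isometry.

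For item (1), I would apply the mean value theorem to $s \mapsto X(\gamma(s))$ to get $\lvert X(x) - X'(y)\rvert \leq K_0 \, d(x,y)$; combining this with $\lvert X'(y)\rvert = \lvert X(y)\rvert$ yields $\bigl|\lvert X(x)\rvert - \lvert X(y)\rvert\bigr| \leq K_0\, d(x,y)$. Shrinking $\beta_0^*$ so that $d(x,y)/\lvert X(x)\rvert \leq \beta_0^* \leq 1/(2K_0)$, the ratio $\lvert X(y)\rvert / \lvert X(x)\rvert$ stays in $[1/2, 3/2]$, and a direct algebraic inversion of the two-sided bound gives (1) with any $K_1 \geq 2K_0$.

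For item (2), using the elementary inequality $\bigl| a/\lvert a\rvert - b/\lvert b\rvert \bigr| \leq 2\lvert a-b\rvert / \lvert a\rvert$ with $a = X(x)$ and $b = X'(y)$, I obtain $\bigl|X(x)/\lvert X(x)\rvert - X'(y)/\lvert X(y)\rvert\bigr| \leq 2K_0\, d(x,y)/\lvert X(x)\rvert$. Since $X'(y)/\lvert X(y)\rvert$ is the parallel transport of $X(y)/\lvert X(y)\rvert$ to $T_x M$, the Sasaki comparison then gives
\[
d_S\!\left(\frac{X(x)}{\lvert X(x)\rvert}, \frac{X(y)}{\lvert X(y)\rvert}\right) \leq 2\Bigl(d(x,y) + 2K_0\, d(x,y)/\lvert X(x)\rvert\Bigr) \leq \bigl(2\lvert X(x)\rvert + 4K_0\bigr)\,\frac{d(x,y)}{\lvert X(x)\rvert},
\]
which is at most $6K_0 \, d(x,y)/\lvert X(x)\rvert$ because $\lvert X(x)\rvert \leq K_0$. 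Item (3) is then immediate from item (2) and the Grassmann--Sasaki comparison $d_{G^1}(\langle u_1\rangle, \langle u_2\rangle) \leq K_{G^1}\, d_S(u_1, u_2)$ applied to $u_i = X(\cdot)/\lvert X(\cdot)\rvert$.

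Taking $K_1$ to be any constant strictly greater than $\max\{K_0,\, 6K_0,\, 6K_0 K_{G^1}\}$ and $\beta_0^*$ small enough that all the above estimates apply simultaneously (in particular small enough to stay inside the injectivity radius and to keep $\lvert u_x - u_y'\rvert \leq \rho_S$ so the Sasaki comparison applies) finishes the proof. There is no genuine obstacle here; the one thing to watch is synchronizing a single pair $(K_1, \beta_0^*)$ that serves all three items at once, which is pure bookkeeping.
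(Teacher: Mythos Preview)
Your argument is correct. Note that the paper does not actually prove this lemma: it is quoted verbatim from \cite[Lemma~3.1]{LLL24} and stated without proof, so there is no ``paper's own proof'' to compare against. Your approach---using the uniform $C^1$ bound on $X$ to control $\lvert X(x)-X'(y)\rvert$ via parallel transport along the minimizing geodesic, then feeding this into the Sasaki--Riemannian comparison already recalled in the text---is exactly the standard route and is almost certainly what the cited reference does as well. The bookkeeping you describe (choosing $\beta_0^*$ small enough that the Sasaki comparison applies and that $\lvert X(y)\rvert$ stays comparable to $\lvert X(x)\rvert$, then taking a single $K_1$ dominating the constants from all three items) is all that is needed.
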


   Let $r_0$ be the radius such that the exponential map is a $C^{\infty}$ diffeomorphism in $T_xM(r_0)$. Reducing $\beta^*_0>0$ if necessary, we may assume that $$10\beta^*_0K_1<\min\{r_0,1\}.$$ Thus, the item $(1)$ of Lemma \ref{Dlgx} means that there is no singularity in $B(x,\beta^*_0\lvert X(x)\rvert)$. Next, we analyze the sectional Poincar\'{e} flow in local charts of scaled neighborhoods, the so-called \emph{Liao scaled charts}, to obtain some uniformity. One can see similar discussions in \cite{CY2017,GY,WW19}.
    
   Let $\{e_1,\cdots,e_d\}$ be an orthonormal basis of $\mathbb{R}^d$. For any $x\in M^d$, take an orthonormal basis $\{e^x_1,\cdots,e^x_d\}$ of $T_xM^d$, we can get a linear iosmetry $$C_x:\mathbb{R}^d\mapsto T_xM^d,$$ which is a metric change of coordinates from Euclidean space to tangent space, such that $$C_x(e_i)=e^x_1,\text{ for $i=1,2,\cdots,d$}.$$ Then, $$\langle C_x(u), C_x(v)\rangle_x=\langle u,v\rangle,~ \forall~u,v\in{R}^d,$$ where $\langle\cdot,\cdot\rangle$ is the standard scalar product on $\mathbb{R}^d$. Denote by $${\rm Exp}_x={\rm exp}_x\circ C_x:\mathbb{R}^d\mapsto M^d,~~\mathbb{R}^d(r)=\left\{v\in\mathbb{R}^d:~\lvert v\rvert\leq r\right\}.$$ Then, ${\rm Exp}_x|_{\mathbb{R}^d(r_0)}$ is a $C^{\infty}$ diffeomorphism from $\mathbb{R}^d(r_0)$ to $B(x,r_0)$, for every $x\in M$. The flow in the neighborhood $B(x,r_0)$ can be locally lifted to $\mathbb{R}^d(r_0)$ by ${\rm Exp}_x$. Namely, if there exists a point $p\in\mathbb{R}^d$ and $t_1<0<t_2$, such that $$\varphi_t({\rm Exp}_x(p))\in B(x,r_0),~\text{ for every $t\in[t_1,t_2]$},$$ then we can define a flow $$\widetilde{\varphi_{x,t}}(p)=\left({\rm Exp}_x|_{\mathbb{R}^d(r_0)}\right)^{-1}\circ\varphi_t\circ{\rm Exp}_x(p),~\text{ for every $t\in[t_1,t_2]$}.$$ In this Liao scaled chart of $x$, the flow $\varphi^X_t$ generated by the vector field $X$ satisfying the differential equation $$\dfrac{dz}{dt}=\widehat{X}_x(z),$$ where $\widehat{X}_x(z)=D\left({\rm Exp}_x|_{\mathbb{R}^d(r_0)}\right)^{-1}\circ X\circ{\rm Exp}_x(z)$ and ${\rm Exp}_x(0)=x$, $\widehat{X}_x(0)=X(0)$. Since $M$ is compact, there is a constant $K_{{\rm E}}>1$ such that $$\max_{x\in M^d}\left\{\left\lVert D{\rm Exp}_x|_{\mathbb{R}^d(r_0)}\right\rVert,~\left\lVert D\left({\rm Exp}_x|_{\mathbb{R}^d(r_0)}\right)^{-1}\right\rVert\right\}<K_{{\rm E}}.$$ Thus, increasing $K_0$ if necessary, we may also assume that $$\max_{x\in M}\left\{\max_{p\in\mathbb{R}^d(r_0)}\left\{\left\lvert\widehat{X}_x(p)\right\rvert,~\left\lVert D\widehat{X}_x(p)\right\rVert\right\}\right\}<K_0.$$ Therefore, $$\left\lvert\widehat{X}_x(p)-\widehat{X}_x(0)\right\rvert<K_0\lvert p\rvert\leq K_0 r_0.$$  

   For every regular point $x\in M$, take $e^x_1=\dfrac{X(x)}{\lvert X(x)\rvert}$. Then, $\widehat{X}_x(0)=X(0)=\left(\lvert X(x)\rvert,0,0,\cdots,0\right)$. Given $x\in M\backslash{\rm Sing}(X)$, for every $y\in B(x,\beta^*_0\lvert X(x)\rvert)$, we may choose an orthonormal basis $\{e^y_1,\cdots,e^y_d\}$ of $T_yM$ such that $e^y_1=\dfrac{X(y)}{\lvert X(y)\rvert}$. Therefore, by Lemma \ref{Dlgx}, for any $y_1,~y_2\in B(x,\beta^*_0\lvert X(x)\rvert)$, we have that $$d_S\left(e^{y_1}_j,e^{y_2}_j\right)<2K_1\dfrac{d(x,y)}{\lvert X(x)\rvert},\text{ for every $j=1,2,\cdots,d$}.$$ Under this orthonormal bases in $T_{B(x,\beta^*_0\lvert X(x)\rvert)}M$, we can get uniform estimate on $C^1$ norm of the diffeomorphism ${\rm Exp}_x$ as the following Lemma \ref{Eugx}.   
   
\begin{Lemma}\label{Eugx}{\rm\cite[Lemma 3.2]{LLL24}}
   There is $K_2>1$ such that for every $x\in M\backslash{\rm Sing}(X)$, one has that $$\left\lVert\left({\rm Exp}^{-1}_{y_1}\circ{\rm Exp}_{y_2}-id_{\mathbb{R}^d}\right)\big\lvert_{\mathbb{R}^d(r_0/2)}\right\rVert_{C^1}\leq K_2\dfrac{d(y_1,y_2)}{\lvert X(x)\rvert},$$ for every $y_1,~y_2\in B(x,\beta^*_0\lvert X(x)\rvert)$, where $\lVert\cdot\rVert$ is the $C^1$ norm.	
\end{Lemma}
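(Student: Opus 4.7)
The plan is to decompose
$$\Psi:={\rm Exp}_{y_1}^{-1}\circ{\rm Exp}_{y_2}=C_{y_1}^{-1}\circ\exp_{y_1}^{-1}\circ\exp_{y_2}\circ C_{y_2}$$
by inserting the parallel transport $P:=P_{y_2\to y_1}:T_{y_2}M\to T_{y_1}M$ along the minimizing geodesic from $y_2$ to $y_1$, which is well defined since $d(y_1,y_2)<2\beta_0^*|X(x)|<\rho_S$ by the choice of $\beta_0^*$. Writing
$$\Psi-{\rm id}_{\mathbb{R}^d}=C_{y_1}^{-1}\circ\bigl(\exp_{y_1}^{-1}\circ\exp_{y_2}-P\bigr)\circ C_{y_2}\;+\;\bigl(C_{y_1}^{-1}\circ P\circ C_{y_2}-{\rm id}_{\mathbb{R}^d}\bigr),$$
I will bound the two summands separately in $C^1(\mathbb{R}^d(r_0/2))$. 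Throughout, since $C_{y_1},C_{y_2}$ are linear isometries from $\mathbb{R}^d$ to the corresponding tangent spaces, they transfer norms without distortion, so each summand can be bounded by working intrinsically with maps between tangent spaces.

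For the first summand, the key observation is that the map
$$(y_1,y_2,v)\longmapsto \exp_{y_1}^{-1}\circ\exp_{y_2}\circ P_{y_1\to y_2}(v)-v$$
is a smooth function, defined on a neighborhood of the diagonal $\{y_1=y_2\}$ in $\{(y_1,y_2,v):v\in T_{y_1}M(r_0/2)\}$, that vanishes identically whenever $y_1=y_2$. By the compactness of $M$, a uniform mean-value estimate in the variable $y_2$ produces a constant $C_0>0$, depending only on the Riemannian geometry of $M$, such that
$$\bigl\|\exp_{y_1}^{-1}\circ\exp_{y_2}-P\bigr\|_{C^1(T_{y_2}M(r_0/2),\,T_{y_1}M)}\leq C_0\cdot d(y_1,y_2).$$
Since $|X(x)|\leq K_0$ one has $d(y_1,y_2)\leq K_0\cdot d(y_1,y_2)/|X(x)|$, so this summand contributes at most $C_0K_0\cdot d(y_1,y_2)/|X(x)|$ to the final $C^1$ norm.

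For the second summand, $C_{y_1}^{-1}\circ P\circ C_{y_2}$ is a linear isometry of $\mathbb{R}^d$, and hence the $C^1$ norm of its difference with ${\rm id}_{\mathbb{R}^d}$ equals its operator distance from the identity, which is controlled by $\sum_{j=1}^d|P(e^{y_2}_j)-e^{y_1}_j|_{T_{y_1}M}$. By the comparison between the Sasaki metric and parallel transport (the very inequality recalled just before Lemma \ref{Dlgx}), each summand satisfies $|P(e^{y_2}_j)-e^{y_1}_j|_{T_{y_1}M}\leq 2\,d_S(e^{y_1}_j,e^{y_2}_j)$, and the estimate
$$d_S\bigl(e^{y_1}_j,e^{y_2}_j\bigr)\leq 2K_1\frac{d(y_1,y_2)}{|X(x)|},\qquad j=1,\dots,d,$$
recorded in the paragraph preceding the lemma (coming from Lemma \ref{Dlgx}(2) for $j=1$ and from propagating that control to the Gram--Schmidt completion for $j\geq 2$) gives a bound $C_1\cdot d(y_1,y_2)/|X(x)|$ for this piece.

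Combining the two estimates yields the lemma with $K_2:=C_0K_0+C_1$. The main obstacle I anticipate is the first step: one must quantify the smooth dependence of $\exp_y$ on its base point as a $C^1$ estimate on a fixed-size ball $T_{y_2}M(r_0/2)$, uniformly in $(y_1,y_2)$ across all of $M$. This is where the compactness of $M$ is essential and where care is needed to ensure that the constant is truly geometric; once this uniformity is in hand, the entire factor $1/|X(x)|$ in the conclusion is produced exclusively by the basis-rotation piece (ii), whose origin is ultimately Lemma \ref{Dlgx}(2) applied to the flow direction $e_1^y=X(y)/|X(y)|$.
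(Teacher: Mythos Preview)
The paper does not give its own proof of this lemma: it is quoted verbatim from \cite[Lemma~3.2]{LLL24} and no argument is supplied here. So there is nothing in the present paper to compare your attempt against.

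That said, your approach is sound and is essentially the natural one. The decomposition
\[
\Psi-{\rm id}_{\mathbb{R}^d}=C_{y_1}^{-1}\circ\bigl(\exp_{y_1}^{-1}\circ\exp_{y_2}-P\bigr)\circ C_{y_2}\;+\;\bigl(C_{y_1}^{-1}\circ P\circ C_{y_2}-{\rm id}_{\mathbb{R}^d}\bigr)
\]
cleanly separates the purely Riemannian comparison of exponential maps (controlled by $d(y_1,y_2)$ via smoothness and compactness of $M$) from the rotation of the adapted frames (controlled by $d(y_1,y_2)/|X(x)|$ via Lemma~\ref{Dlgx} and the frame estimate $d_S(e^{y_1}_j,e^{y_2}_j)\le 2K_1\,d(y_1,y_2)/|X(x)|$ recorded just before the statement). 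Your use of $|X(x)|\le K_0$ to absorb the first piece into the second is exactly what produces the factor $1/|X(x)|$ in the final bound.

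Two small remarks. First, your sentence ``the $C^1$ norm of its difference with ${\rm id}_{\mathbb{R}^d}$ equals its operator distance from the identity'' is only true up to a factor depending on $r_0$, since the $C^0$ part of the $C^1$ norm on $\mathbb{R}^d(r_0/2)$ contributes $(r_0/2)\|A-I\|_{\rm op}$ for a linear map $A$; this is harmless for the conclusion but worth stating accurately. Second, the control of $d_S(e^{y_1}_j,e^{y_2}_j)$ for $j\ge 2$ is not a formal consequence of Lemma~\ref{Dlgx} alone; it requires the specific choice of frame completion that the paper only asserts exists. You are entitled to use that assertion as stated, but your parenthetical ``propagating that control to the Gram--Schmidt completion'' is where the real content of that step lives.
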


\subsection{Shadowing lemma}
   The first time \emph{shadowing} appeared in the literature was in a paper \cite{Si72} by Sina\u{\i} where it is shown that Anosov diffeomorphisms have shadowing and furthermore that every pseudo-orbit has a unique point shadowing it. Shadowing describes the situation where a true orbit of a dynamical system lies uniformly near a pseudo-orbit. The shadowing lemma for diffeomorphisms has been described perfectly. But for flows, the shadowing lemma is rather more complicated than that for diffeomorphisms.
   
\begin{Definition}
   Let $\Lambda$ be an invariant set and $E\subset\mathcal{N}_{\Lambda\backslash\text{Sing}(X)}$ an invariant subbundle of the linear Poincar\'{e} flow $\psi_t$. For $C>0$, $\eta>0$ and $T>0$, $x\in\Lambda\backslash\text{Sing}(X)$ is called $(C,\eta,T,E)$-$\psi^*_t$-contracting if there exists a time partition: $0=t_0<t_1<\cdots<t_n<\cdots$ such that $t_{i+1}-t_i\leq T$ for each $i\in\mathbb{N}$, and $t_n\rightarrow\infty$ as $n\rightarrow\infty$, and $$\prod_{i=0}^{n-1}\left\lVert\psi^*_{t_{i+1}-t_i}|_{E(\varphi_{t_i}(x))}\right\rVert\leq Ce^{-\eta t_n},~\text{for every }n\in\mathbb{N}.$$ A point $x\in\Lambda\backslash\text{Sing}(X)$ is called $(C,\eta,T,E)$-$\psi^*_t$-expanding if it is $(C,\eta,T,E)$-$\psi^*_t$-contracting for the vector field $-X$. 	
\end{Definition}

\begin{Definition}
   Given $\eta>0$, $T>0$. For any $x\in M\backslash\text{Sing}(X)$, $T_0>T$, the orbit arc $\varphi_{[0,T_0]}$ is called $(\eta,T)$-$\psi^*_t$-quasi hyperbolic with respect to a direct sum splitting $\mathcal{N}_x=E_x\oplus F_x$, if there is a time partition: $$0=t_0<t_1<\cdots<t_k=T_0,\quad t_{i+1}-t_i\leq T,$$ such that for $n=0,1,\cdots,k-1$, one has that $$\prod_{i=0}^{n-1}\left\lVert\psi^*_{t_{i+1}-t_i}|_{E(\varphi_{t_i}(x))}\right\rVert\leq e^{-\eta t_k},\quad\prod_{i=n}^{k-1}m\left(\psi^*_{t_{i+1}-t_i}|_{F(\varphi_{t_i}(x))}\right)\geq e^{\eta(t_k-t_n)},$$ $$\dfrac{\left\lVert\psi^*_{t_{n+1}-t_n}|_{E(\varphi_{t_n}(x))}\right\rVert}{m\left(\psi^*_{t_{n+1}-t_n}|_{F(\varphi_{t_n}(x))}\right)}\leq e^{-\eta(t_{n+1}-t_n)}.$$      	
\end{Definition}

   The following shadowing lemma for singular flows was given by Liao \cite{LST85}. 
\begin{Theorem}\label{Shadow}
   Let $X\in\mathfrak{X}^1(M^d)$, $\Lambda\in M\backslash\text{Sing}(X)$ be an invariant set with a dominated splitting $\mathcal{N}_\Lambda=E\oplus F$ and $\eta>0,~T>0$. For $\alpha>0$ and $\varepsilon>0$, there is $\delta>0$ such that for every $(\eta,T)$-$\psi^*_t$-quasi hyperbolic orbit segment $\varphi_{[0,T_0]}(x)$ satisfying
\begin{itemize}
   \item $d(x,\text{Sing}(X))>\alpha$ and $d(\varphi_{T_0}(x),\text{Sing}(X))>\alpha$;
   
   \item $x\in\Lambda$, $\varphi_{T_0}(x)\in\Lambda$ and $d(x,\varphi_{T_0}(x))<\delta$; 
\end{itemize}  
   there exists a $C^1$ strictly increasing function $\theta:[0,T_0]\rightarrow\mathbb{R}$ and a periodic point $p\in M$  such that
\begin{description}
   \item[(1)] $\theta(0)=0$ and $1-\varepsilon<\theta'(t)<1+\varepsilon$, for every $t\in[0,T_0]$;
   
   \item[(2)] $p$ is periodic: $\varphi_{\theta(T_0)}(p)=p$;
   
   \item[(3)] $d(\varphi_t(x),\varphi_{\theta(t)}(p))<\varepsilon\lvert X(\varphi_t(x))\rvert$, for every $t\in[0,T_0]$.
\end{description}    
\end{Theorem}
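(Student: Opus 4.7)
The strategy is to discretize the flow using the given partition $0=t_0<t_1<\cdots<t_k=T_0$, pass to the sequence of scaled sectional Poincar\'e maps $\mathcal{P}^*_i:=\mathcal{P}^*_{\varphi_{t_i}(x),\varphi_{t_{i+1}}(x)}$ between consecutive normal discs, and reduce the continuous shadowing problem to a discrete graph-transform problem in the normal bundle. Writing $x_i:=\varphi_{t_i}(x)$, the derivative of $\mathcal{P}^*_i$ at the origin is exactly $\psi^*_{t_{i+1}-t_i}|_{\mathcal{N}_{x_i}}$, so the $(\eta,T)$-$\psi^*_t$-quasi-hyperbolic hypothesis translates directly into uniform block-hyperbolicity of the linear cocycle of derivatives along the discrete pseudo-orbit $\{x_i\}_{i=0}^{k}$.

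First I would rescale the $i$-th normal disc $\mathcal{N}_{x_i}(\beta_T|X(x_i)|)$ by the factor $|X(x_i)|^{-1}$, so that $\mathcal{P}^*_i$ becomes a $C^1$ map between \emph{fixed} balls of radius $\beta_T$ in $\mathbb{R}^{d-1}$. Lemma~\ref{well-def} makes this rescaling well-defined uniformly in $i$ (since $t_{i+1}-t_i\le T$), and Lemma~\ref{uni-con} gives uniform $C^1$-continuity of the derivative, so each rescaled $\mathcal{P}^*_i$ is a uniformly small $C^1$-perturbation of the linear map $\psi^*_{t_{i+1}-t_i}$ on that fixed ball. The dominated splitting $\mathcal{N}_\Lambda=E\oplus F$ persists as a uniformly dominated splitting for the rescaled linear cocycle.

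Next I would run a standard graph transform: inside the fixed ball, look for a sequence of graphs over $E$ of Lipschitz constant less than one that are invariant under $\mathcal{P}^*_0,\dots,\mathcal{P}^*_{k-1}$. The closing condition $d(x_0,x_k)<\delta$ together with Lemma~\ref{Eugx} identifies the chart at $x_k$ with the chart at $x_0$ up to a $C^1$-error of size $O(\delta/|X(x_0)|)$; for $\delta$ small enough (depending on $\alpha,\varepsilon,\eta,T$) this identification preserves the uniform block-hyperbolic gap, so the graph transform on the resulting circular sequence is a contraction and has a unique fixed point. This fixed point yields a discrete periodic orbit $\{p_i\}$ of the sequence $\mathcal{P}^*_i$ with $|p_i-x_i|<\varepsilon|X(x_i)|$ for every $i$, which integrates to a genuine periodic point $p$ of the flow. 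Finally, Lemma~\ref{return} converts the discrete scaled-distance bound at each $t_i$ into a continuous scaled-distance bound on each $[t_i,t_{i+1}]$ and produces the strictly increasing $C^1$ reparameterization $\theta$ with $|\theta'-1|<\varepsilon$.

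The main obstacle is the closing step: identifying the chart at $x_k$ with the chart at $x_0$ introduces an extra $C^1$-perturbation that must be absorbed into the discrete shadowing error without destroying the uniform block-hyperbolic gap produced by $(\eta,T)$-quasi-hyperbolicity. The size of this perturbation is $O(\delta/|X(x_0)|)$, which is precisely why the hypothesis requires the $\alpha$-lower-bounds on $d(x_0,\text{Sing}(X))$ and $d(x_k,\text{Sing}(X))$: they yield $|X(x_0)|,|X(x_k)|\ge c(\alpha)>0$, after which choosing $\delta$ sufficiently small relative to the gap constants of the cocycle makes the contraction principle go through. Any attempt to remove the $\alpha$-condition would force the perturbation size to be unbounded, breaking the graph transform.
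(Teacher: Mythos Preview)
The paper does not prove this theorem at all; it is stated as a quotation of Liao's shadowing lemma for singular flows and attributed to \cite{LST85} immediately before the statement. There is therefore no ``paper's own proof'' to compare against.

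That said, your sketch is a reasonable outline of how results of this type are established, and it correctly identifies the two places where the hypotheses enter: the quasi-hyperbolicity of the scaled linear Poincar\'e cocycle drives the contraction in the graph transform, and the lower bound $\alpha$ on the distance to ${\rm Sing}(X)$ is exactly what makes the chart-identification error $O(\delta/|X(x_0)|)$ controllable. Your use of Lemmas~\ref{well-def}, \ref{uni-con}, \ref{Eugx}, and \ref{return} from the paper is appropriate: these are the uniform estimates on the scaled sectional Poincar\'e maps that Liao's argument (and later reformulations such as those in \cite{GY} and \cite{WYZ}) rely on. One point you gloss over is that the graph transform over a \emph{finite} circular sequence with non-uniform block norms requires the product estimates in the definition of $(\eta,T)$-quasi-hyperbolicity, not merely a single-step domination; you should make explicit that the contraction of the graph transform comes from the telescoping products $\prod_{i=0}^{n-1}\|\psi^*_{t_{i+1}-t_i}|_{E(\varphi_{t_i}(x))}\|$ and their duals, since the individual factors need not be contracting. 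With that caveat, the strategy is sound and in line with the standard proofs in the literature.
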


\section{ Density of periodic measures: proof of Theorem \ref{ThmA}}

   In this section, we give the proof of Theorem \ref{ThmA}. Before proving the Theorem \ref{ThmA}, we give a discussion about the Oseledec splitting. Let $\mu$ be a hyperbolic ergodic invariant measure of the flow $\varphi_t$ generated by a $C^1$ vector field $X\in\mathfrak{X}^1(M^3)$ on three-dimensional manifolds, $\lambda_1\leq\lambda_2$ be the Lyapunov exponents of $\psi^*_t$ with respect to the hyperbolic ergodic invariant measure $\mu$. Then, there is a measurable $\psi^*_t$-invariant splitting $$\mathcal{N}_\Gamma=E_1\oplus E_2$$ of normal bundle on $\Gamma$ (Oseledec's basin of $\mu$) satisfying that $$\lim_{t\to\pm\infty}\dfrac{1}{t}\log\parallel\psi^*_t(v) \parallel=\lambda_i,~\forall~v\in E_i,~v\neq 0,~i=1,2.$$   
   
\begin{Lemma}\label{time}
   If the Oseledec splitting $$\mathcal{N}_\Gamma=E_1\oplus E_2$$ of a hyperbolic ergodic invariant measure $\mu$ with respect to the three-dimensional flow $\varphi_t$ is a dominated splitting, then for every $0<\epsilon\ll\min\{\lvert\lambda_1\rvert,\lvert\lambda_2\rvert\}$ and every $\delta\in(0,1)$, there is a positive number $L=L(\epsilon,\delta)$ such that
\begin{itemize}
   \item for every $T\geq L$, there exists a measurable set $\Gamma^T=\Gamma^T(\epsilon,\delta)\subset\Gamma$ with $\mu(\Gamma^T)\geq 1-\delta$;
   
   \item there is a natural number $N=N(T)$ such that for every $n\geq N$ and every $x\in\Gamma^T$, we have that $$e^{(\lambda_j-\epsilon)nT}\leq\prod_{i=0}^{n-1}m\left(\psi^*_T|_{E_j(\varphi_{iT}(x))}\right)\leq\prod_{i=0}^{n-1}\left\lVert\psi^*_T|_{E_j(\varphi_{iT}(x))}\right\rVert\leq e^{(\lambda_j+\epsilon)nT},~j=1,2.$$
\end{itemize}     	
\end{Lemma}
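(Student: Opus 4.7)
The plan is to deduce the lemma from a routine application of Egorov's theorem on top of Oseledec, once a dimensional simplification is made. Since $\dim M = 3$, the normal bundle $\mathcal{N}$ has rank $2$, so the nontrivial invariant splitting $\mathcal{N}_\Gamma = E_1\oplus E_2$ forces $\dim E_j = 1$ for $j=1,2$. On each one-dimensional invariant line $\psi^*_T|_{E_j(x)}$ is a scalar operator, whence $m(\psi^*_T|_{E_j(x)}) = \lVert\psi^*_T|_{E_j(x)}\rVert$, and the cocycle property $\psi^*_{s+t} = \psi^*_s\circ\psi^*_t$ collapses to the multiplicative identity
\[\lVert\psi^*_{nT}|_{E_j(x)}\rVert = \prod_{i=0}^{n-1}\lVert\psi^*_T|_{E_j(\varphi_{iT}(x))}\rVert.\]
This identity is the one place where the three-dimensional hypothesis enters essentially.

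Next, by the Oseledec theorem, for $\mu$-a.e. $x\in\Gamma$ one has $\tfrac{1}{t}\log\lVert\psi^*_t|_{E_j(x)}\rVert\to\lambda_j$ as $t\to+\infty$, for $j=1,2$. Define the tail function
\[h_L(x) := \sup_{t\geq L}\max_{j=1,2}\left|\tfrac{1}{t}\log\lVert\psi^*_t|_{E_j(x)}\rVert - \lambda_j\right|,\]
which is measurable in $x$ (the supremum over $t\geq L$ reduces to a countable supremum over the rationals because $t\mapsto\lVert\psi^*_t|_{E_j(x)}\rVert$ is continuous) and satisfies $h_L(x)\to 0$ for $\mu$-a.e. $x$ as $L\to\infty$, monotonically in $L$. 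Applying Egorov's theorem produces a measurable set $\Gamma^*\subset\Gamma$ with $\mu(\Gamma^*)\geq 1-\delta$ together with a threshold $L = L(\epsilon,\delta)$ such that $h_L(x)<\epsilon$ for every $x\in\Gamma^*$.

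Now fix $T\geq L$, set $\Gamma^T := \Gamma^*$ and take $N(T) := 1$. For any $n\geq 1$ we have $nT\geq T\geq L$, so the Egorov bound applied at time $t = nT$ gives $|\tfrac{1}{nT}\log\lVert\psi^*_{nT}|_{E_j(x)}\rVert - \lambda_j|<\epsilon$, that is,
\[e^{(\lambda_j-\epsilon)nT}\leq\lVert\psi^*_{nT}|_{E_j(x)}\rVert\leq e^{(\lambda_j+\epsilon)nT}.\]
Substituting the cocycle identity from the first step, together with $m = \lVert\cdot\rVert$ in the one-dimensional setting, yields exactly the required chain
\[e^{(\lambda_j-\epsilon)nT}\leq\prod_{i=0}^{n-1}m\bigl(\psi^*_T|_{E_j(\varphi_{iT}(x))}\bigr)\leq\prod_{i=0}^{n-1}\lVert\psi^*_T|_{E_j(\varphi_{iT}(x))}\rVert\leq e^{(\lambda_j+\epsilon)nT}.\]

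Substantively, the lemma is a quantitative, uniform-in-$x$ reformulation of Oseledec, so no serious obstacle arises. The two mildly delicate points are the measurability of the continuous-time supremum $h_L$ (handled via continuity of $\psi^*_t$ in $t$, reducing the sup to a countable one) and the passage from the single norm $\lVert\psi^*_{nT}|_{E_j(x)}\rVert$ to a genuine product of time-$T$ norms, which works cleanly only because one-dimensionality of $E_j$ makes $\lVert\cdot\rVert$ multiplicative along compositions. The dominated-splitting hypothesis plays no direct role in the argument itself; together with hyperbolicity it simply ensures that the two exponents are distinct, so that $E_1$ and $E_2$ are both well-defined one-dimensional invariant subbundles on $\Gamma$, and it is inherited from the broader framework (e.g.\ Lemma \ref{SWDS}) rather than used inside the proof.
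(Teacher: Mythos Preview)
Your argument is correct and genuinely different from the paper's. You exploit the fact that in dimension three the subbundles $E_1,E_2$ are one-dimensional, so the cocycle property of $\psi^*_t$ collapses the product $\prod_{i=0}^{n-1}\lVert\psi^*_T|_{E_j(\varphi_{iT}(x))}\rVert$ to the single quantity $\lVert\psi^*_{nT}|_{E_j(x)}\rVert$, after which a single application of Egorov to the Oseledec convergence $\tfrac{1}{t}\log\lVert\psi^*_t|_{E_j(x)}\rVert\to\lambda_j$ finishes the job; this even yields the sharper conclusion $N(T)=1$ and a set $\Gamma^T=\Gamma^*$ independent of $T$. The paper instead never uses one-dimensionality: for each fixed $T$ it applies the Birkhoff ergodic theorem to the function $f_{j,T}(x)=\bigl|\tfrac{1}{T}\log\lVert\psi^*_T|_{E_j(x)}\rVert-\lambda_j\bigr|$ under the map $\varphi_T$, bounds $\int f_{j,T}\,d\mu$ via dominated convergence, uses Markov's inequality to extract a large set on which the Birkhoff limit $g_{j,T}\le\epsilon/2$, and then runs a further exhaustion argument to make the threshold $N(x)$ uniform. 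The paper also invokes the dominated-splitting hypothesis to argue continuity of $x\mapsto E_j(x)$ and hence of $f_{j,T}$, though (as you rightly note) only measurability and integrability are actually needed. Your route is shorter and gives a stronger statement in this three-dimensional setting; the paper's route does not rely on $\dim E_j=1$ and so its template would survive in higher-dimensional situations where the multiplicativity trick fails.
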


\begin{proof}
   According to the Oseledec Theorem, for every $x\in\Gamma$, we have that
\begin{equation*}
   \lim_{t\rightarrow+\infty}\dfrac{1}{t}\log m\left(\psi^*_t|_{E_j(x)}\right)=\lim_{t\rightarrow+\infty}\dfrac{1}{t}\log\left\lVert\psi^*_t|_{E_j(x)}\right\rVert=\lambda_j,~j=1,2\tag{\textasteriskcentered}
\end{equation*}   
   Now, for every $0<\varepsilon\ll\min\{\lvert\lambda_1\rvert,\lvert\lambda_2\rvert\}$ and every $\delta\in(0,1)$, we will complete the proof of the lemma in three steps.
      
   {\bf Step $1$:} By equation $(*)$, for each $j=1,2$, we have that $$\lim_{t\rightarrow+\infty}\left\lvert\dfrac{1}{t}\log\left\lVert\psi^*_t|_{E_j(x)}\right\rVert-\lambda_j\right\rvert=0,~~\text{ for every }x\in\Gamma.$$ Therefore, $$\lim_{t\rightarrow+\infty}\int_M\left\lvert\dfrac{1}{t}\log\left\lVert\psi^*_t|_{E_j(x)}\right\rVert-\lambda_j\right\rvert d\mu(x)=0.$$ Thus, there exists a positive number $L^+_j$ such that for every $T\geq L^+_j$, we have that $$\int_M\left\lvert\dfrac{1}{T}\log\left\lVert\psi^*_T|_{E_j(x)}\right\rVert-\lambda_j\right\rvert d\mu(x)\leq\dfrac{\delta\epsilon}{36}.$$ Since Oseledec splitting  $$\mathcal{N}_\Gamma=E_1\oplus E_2$$ is a dominated splitting, Oseledec splitting is a continuous splitting \cite{BDV05}. Therefore, for every fix $j\in\{1,2\}$ and natural number $T$, the function $$f_{j,T}(x):=\left\lvert\dfrac{1}{T}\log\left\lVert\psi^*_T|_{E_j(x)}\right\rVert-\lambda_j\right\rvert$$ is continuous on $\Gamma$. On the other hand, $\mu$ is invariant measure of $\varphi_T$. By Birkhoff Ergodic theorem, there exists a measurable function $g_{j,T}$ such that for $\mu$-almost every $x\in M$, we have that $$\lim_{n\rightarrow+\infty}\dfrac{1}{n}\sum_{i=0}^{n-1}f_{j,T}(\varphi_{iT})=\lim_{n\rightarrow+\infty}\dfrac{1}{nT}\sum_{i=0}^{n-1}\left\lvert\log\left\lVert\psi^*_T|_{E_j(\varphi_{iT}(x))}\right\rVert-T\lambda_j\right\rvert=g_{j,T}(x),$$ and
\begin{equation*}
   \int_Mf_{j,T}(x)d\mu(x)=\int_M\left\lvert\dfrac{1}{T}\log\left\lVert\psi^*_T|_{E_j(x)}\right\rVert-\lambda_j\right\rvert d\mu(x)=\int_Mg_{j,T}(x)d\mu(x)\leq\dfrac{\delta\epsilon}{36}.\tag{\textasteriskcentered\textasteriskcentered}
\end{equation*} 	 
   Set $$\Gamma^+_{j,T}=\left\{x\in\Gamma:~g_{j,T}(x)\leq\dfrac{\epsilon}{2}\right\}.$$ By inequality $(**)$, we have that $$\mu\left(\Gamma^+_{j,T}\right)\geq 1-\dfrac{\delta}{18}$$ and $$\lim_{n\rightarrow+\infty}\dfrac{1}{nT}\sum_{i=0}^{n-1}\left\lvert\log\left\lVert\psi^*_T|_{E_j(\varphi_{iT}(x))}\right\rVert-T\lambda_j\right\rvert\leq\dfrac{\epsilon}{2},\quad\text{ for every }x\in\Gamma^+_{j,T}.$$ Namely, for every $x\in\Gamma^+_{j,T}$, there exists $N^+_j(x)$ such that $$\prod_{i=0}^{n-1}\left\lVert\psi^*_T|_{E_j(\varphi_{iT}(x))}\right\rVert\leq e^{(\lambda_j+\epsilon)nT},\quad\text{ for every }n\geq N^+_j(x).$$

   {\bf Step $2$:} By equation $(*)$, for each $j=1,2$, we have that $$\lim_{t\rightarrow+\infty}\left\lvert\dfrac{1}{t}\log m\left(\psi^*_t|_{E_j(x)}\right)-\lambda_j\right\rvert=0,\quad\text{ for every }x\in\Gamma.$$ Therefore, $$\lim_{t\rightarrow+\infty}\int_M\left\lvert\dfrac{1}{t}\log m\left(\psi^*_t|_{E_j(x)}\right)-\lambda_j\right\rvert d\mu(x)=0.$$ Thus, there exists a positive number $L^-_j$ such that for every $T\geq L^-_j$, we have that $$\int_M\left\lvert\dfrac{1}{T}\log m\left(\psi^*_T|_{E_j(x)}\right)-\lambda_j\right\rvert d\mu(x)\leq\dfrac{\delta\epsilon}{36}.$$ Since Oseledec splitting $$\mathcal{N}_\Gamma=E_1\oplus E_2$$ is a dominated splitting, Oseledec splitting is a continuous splitting \cite{BDV05}. Therefore, for every fix $j\in\{1,2\}$ and natural number $T$, the function $$\widetilde{f}_{j,T}(x):=\left\lvert\dfrac{1}{T}\log m\left(\psi^*_T|_{E_j(x)}\right)-\lambda_j\right\rvert$$ is continuous on $\Gamma$. On the other hand, $\mu$ is invariant measure of $\varphi_T$. By Birkhoff Ergodic theorem, there exists a measurable function $\widetilde{g}_{j,l}$ such that for $\mu$-almost every $x\in M$, we have that $$\lim_{n\rightarrow+\infty}\dfrac{1}{n}\sum_{i=0}^{n-1}\widetilde{f}_{j,T}(\varphi_{iT})=\lim_{n\rightarrow+\infty}\dfrac{1}{nT}\sum_{i=0}^{n-1}\left\lvert\log m\left(\psi^*_T|_{E_j(\varphi_{iT}(x))}\right)-T\lambda_j\right\rvert=\widetilde{g}_{j,T}(x),$$ and
\begin{equation*}
   \int_M\widetilde{f}_{j,T}(x)d\mu(x)=\int_M\left\lvert\dfrac{1}{T}\log m\left(\psi^*_T|_{E_j(x)}\right)-\lambda_j\right\rvert d\mu(x)=\int_M\widetilde{g}_{j,T}(x)d\mu(x)\leq\dfrac{\delta\epsilon}{36}.\tag{\textasteriskcentered\textasteriskcentered\textasteriskcentered}
\end{equation*}   
   Set $$\Gamma^-_{j,T}=\left\{x\in\Gamma:~\widetilde{g}_{j,T}(x)\leq\dfrac{\epsilon}{2}\right\}.$$ By inequality $(***)$, we have that $$\mu\left(\Gamma^-_{j,T}\right)\geq 1-\dfrac{\delta}{18}$$ and $$\lim_{n\rightarrow+\infty}\dfrac{1}{nT}\sum_{i=0}^{n-1}\left\lvert\log m\left(\psi^*_T|_{E_j(\varphi_{iT}(x))}\right)-T\lambda_j\right\rvert\leq\dfrac{\epsilon}{2},\text{ for every }x\in\Gamma^-_{j,T}.$$ Namely, for every $x\in\Gamma^-_{j,l}$, there exists $N^-_j(x)$ such that $$\prod_{i=0}^{n-1}m\left(\psi^*_T|_{E_j(\varphi_{iT}(x))}\right)\geq e^{(\lambda_j-\varepsilon)nT},\quad\text{ for every }n\geq N^-_j(x).$$  
	
   {\bf Step $3$:} Set $$L:=\max\{L^+_1,~L^-_1,~L^+_2,~L^-_2\},\quad\Lambda^T:=\bigcap_{j=1}^2\left(\Gamma^+_{j,T}\cap\Gamma^-_{j,T}\right),\quad\text{ for every }T\geq L.$$ Then, $\Lambda^T\subset\Gamma$ and $$\mu\left(\Lambda^T\right)>1-\dfrac{\delta}{2}.$$ Therefore, for every $x\in\Lambda^T$, there exists $N(x)=\max\{N^+_1(x),~N^-_1(x),~N^+_2(x),~N^-_2(x)\}$ such that for every $j\in\{1,2\}$, one has that
\begin{equation*}
   e^{(\lambda_j-\epsilon)nT}\leq\prod_{i=0}^{n-1}m\left(\psi^*_T|_{E_j(\varphi_{iT}(x))}\right)\leq\prod_{i=0}^{n-1}\left\lVert\psi^*_T|_{E_j(\varphi_{iT}(x))}\right\rVert\leq e^{(\lambda_j+\epsilon)nT},\text{ for every }n\geq N(x).\tag{\dag}
\end{equation*}    
   
   For $x\in\Lambda^T$, let $$\widetilde{N}(x):=\min\left\{N(x):~\text{ the inequality $(\dag)$ holds for all } n\geq N(x)\right\}.$$ Denote by $$\Lambda^T_n=\{x\in\Lambda^T:~\widetilde{N}(x)\leq n\}.$$ Then, $\Lambda^T_n\subset\Lambda^T_{n+1}$ and $$\Lambda^T=\bigcup_{n=1}^{+\infty}\Lambda^T_n.$$ Consequently, there is $n_0$ satisfying $\mu\left(\Lambda^T_{n_0}\right)>1-\delta$. For proving this lemma, we set $$N=N(T)=n_0,\quad\Gamma^T=\Lambda^T_{n_0}.$$
\end{proof}

\begin{proof}[Proof of Theorem \ref{ThmA}]
   Let $\mu$ be an ergodic hyperbolic invariant measure of the flow $\varphi_t$ generated by the star vector field $X\in\mathfrak{X}^*(M)$ on three-dimensional manifolds, $\lambda_1\leq\lambda_2$ be the Lyapunov exponents of $\psi^*_t$ with respect to the ergodic hyperbolic invariant measure $\mu$. Then, there is a measurable $\psi^*_t$-invariant splitting $$\mathcal{N}_\Gamma=E_1\oplus E_2$$ of normal bundle on $\Gamma$ (Oseledec's basin of $\mu$) satisfying that $$\lim_{t\to\pm\infty}\dfrac{1}{t}\log\parallel\psi^*_t(v) \parallel=\lambda_j,~\forall~v\in E_j,~v\neq 0,~j=1,2.$$ By Lemma \ref{SWDS}, there is a dominated splitting $\mathcal{N}_\Gamma=\widetilde{E_1}\oplus\widetilde{E_2}$ with respect to the scaled linear Poincar\'{e} flow $\psi^*_t$ such that $$\text{\rm dim}(\widetilde{E_1})=\text{\rm Ind}(\mu).$$ Regarding the Lyapunov exponents, we will discuss it in three cases.
    
   {\bf Case (1)}: If $\lambda_1$ and $\lambda_2$ are both negative , then $\text{\rm Ind}(\mu)=2$. Therefore, $\text{\rm dim}(\widetilde{E_1})=2$. Since $\text{\rm dim}(M)=3$, the fact that $\text{\rm dim}(\widetilde{E_1})=2$ contradicts that $\mathcal{N}_\Gamma=\widetilde{E_1}\oplus\widetilde{E_2}$ is a dominated splitting. This case is not valid. 
   
   {\bf Case (2)}: If $\lambda_1$ and $\lambda_2$ are both negative, then $\text{\rm Ind}(\mu)=0$. By similar discussion as {\bf Case (1)}, this case is also not valid.          
   
   {\bf Case (3)}: If $\lambda_1<0<\lambda_2$, then we claim that $\mathcal{N}_\Gamma=E_1\oplus E_2$ is a dominated splitting.     
\begin{claim}
   The Oseledec splitting $\mathcal{N}_\Gamma=E_1\oplus E_2$ is a dominated splitting.
\end{claim}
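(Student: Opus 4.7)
The plan is to show that the Oseledec splitting $\mathcal{N}_\Gamma = E_1 \oplus E_2$ must coincide with the dominated splitting provided by Lemma \ref{SWDS}, and therefore inherits the domination.

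First, I would invoke Lemma \ref{SWDS} to obtain the dominated splitting $\mathcal{N}_{\text{supp}(\mu)\setminus\text{Sing}(X)} = \widetilde{E_1} \oplus \widetilde{E_2}$ with respect to $\psi^*_t$, where $\dim(\widetilde{E_1}) = \text{Ind}(\mu)$. In the present case, since $\lambda_1 < 0 < \lambda_2$ and each Oseledec direction is one-dimensional, we have $\text{Ind}(\mu) = 1$, hence $\dim(\widetilde{E_1}) = \dim(\widetilde{E_2}) = 1$. Restricting this splitting to $\Gamma$ (which has full $\mu$-measure), I get a $\psi^*_t$-invariant measurable splitting on which I can read off Lyapunov exponents.

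Next, I would compute the Lyapunov exponents along $\widetilde{E_1}$ and $\widetilde{E_2}$. Denote them by $\tilde\lambda_1$ and $\tilde\lambda_2$ respectively; these exist at $\mu$-a.e.\ $x$ and are constant by ergodicity. Taking logarithms in the domination inequality
\begin{equation*}
\bigl\lVert\psi^*_t|_{\widetilde{E_1}(x)}\bigr\rVert\cdot\bigl\lVert\psi^*_{-t}|_{\widetilde{E_2}(\varphi_t(x))}\bigr\rVert\leq Ce^{-\lambda t},
\end{equation*}
dividing by $t$, and letting $t\to\infty$ yields $\tilde\lambda_1 - \tilde\lambda_2 \leq -\lambda < 0$, so $\tilde\lambda_1 < \tilde\lambda_2$. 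Since the full Lyapunov spectrum of $\psi^*_t$ with respect to $\mu$ is $\{\lambda_1,\lambda_2\}$ (each with multiplicity one), we must have $\tilde\lambda_1 = \lambda_1$ and $\tilde\lambda_2 = \lambda_2$.

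Then I would identify the two splittings. Take any nonzero $v\in \widetilde{E_1}(x)$. It satisfies $\lim_{t\to\pm\infty}\frac{1}{t}\log\lVert\psi^*_t(v)\rVert = \lambda_1$. Writing $v = v_1 + v_2$ with $v_j \in E_j(x)$, if $v_2\neq 0$ then the Oseledec theorem forces the growth rate of $\psi^*_t(v)$ to be $\lambda_2$, contradicting the above. Hence $v\in E_1(x)$, and equality of dimensions gives $\widetilde{E_1}(x) = E_1(x)$; the analogous argument (or a dimension count) gives $\widetilde{E_2}(x) = E_2(x)$. Consequently the Oseledec splitting $E_1\oplus E_2$ coincides with the dominated splitting $\widetilde{E_1}\oplus\widetilde{E_2}$ on $\Gamma$ and therefore satisfies the same domination estimate.

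The only subtle step is the matching of exponents in the second paragraph: one must be sure the spectrum of $\psi^*_t$ really consists of the two simple values $\lambda_1,\lambda_2$ (with multiplicity one each), so that a one-dimensional invariant subbundle is forced to have one of these as its exponent. Since $\mu$ is not supported on $\text{Sing}(X)$ and $\dim\mathcal{N} = d-1 = 2$, this is exactly what the hypothesis $\lambda_1 < 0 < \lambda_2$ of Case (3) gives. Once this multiplicity remark is made, the rest is a direct consequence of the uniqueness of the Oseledec filtration.
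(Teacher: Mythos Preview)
Your proof is correct and follows essentially the same approach as the paper: both arguments invoke Lemma~\ref{SWDS} to obtain a one-dimensional dominated splitting $\widetilde{E_1}\oplus\widetilde{E_2}$ and then show it coincides with the Oseledec splitting. The only cosmetic difference is that you first identify the exponents along $\widetilde{E_j}$ and then match subspaces, whereas the paper derives a direct contradiction by comparing the growth of a vector $u\in\widetilde{E_1}\setminus E_1$ against a vector $v\in E_1\setminus\widetilde{E_1}$; these are two phrasings of the same uniqueness argument.
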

\begin{proof}[Proof of Claim]
   Since $\mathcal{N}_\Gamma=\widetilde{E_1}\oplus\widetilde{E_2}$ is a dominated splitting with respect to the scaled linear Poincar\'{e} flow $\psi^*_t$ such that $\text{\rm dim}(\widetilde{E_1})=\text{\rm Ind}(\mu)=1$, we only need to show that the two splitting $\mathcal{N}_\Gamma=E_1\oplus E_2$ and $\mathcal{N}_\Gamma=\widetilde{E_1}\oplus\widetilde{E_2}$ coincide.
   
   Given $x\in\Gamma$, for the splitting $\mathcal{N}_x=E_1(x)\oplus E_2(x)$, fix $\epsilon\in\left(0,\dfrac{\min\{\lvert\lambda_1\rvert,~\lambda_2\}}{8}\right)$ small, there is a natural number $T_0>0$ such that for any $t\geq T_0$, one has that $$e^{(\lambda_1-\epsilon)t}\lvert v\rvert\leq\parallel\psi^*_t(v)\parallel\leq e^{(\lambda_1+\epsilon)t}\lvert v\rvert,\quad\text{ for any nonzero vector }v\in E_1(x),\text{ and }$$ $$e^{(\lambda_2-\epsilon)t}\lvert v\rvert\leq\parallel\psi^*_t(v)\parallel\leq e^{(\lambda_2+\epsilon)t}\lvert v\rvert,\quad\text{ for any nonzero vector }v\in E_2(x).$$ For the splitting $\mathcal{N}_x=\widetilde{E_1}(x)\oplus\widetilde{E_2}(x)$, by the definition of dominated splitting, there are constants $C\geq 1$, $\lambda>0$ such that for every $t\geq 0$, one has that $$\dfrac{\left\lVert\psi^*_t|_{\widetilde{E_1}(x)} \right\rVert}{\left\lVert\psi^*_t|_{\widetilde{E_2}(x)}\right\rVert}\leq Ce^{-\lambda t}.$$ Conversely, suppose that $E_1(x)\nsubseteq\widetilde{E_1}(x)$. Then, there is a nonzero vector $u\in\widetilde{E_1}(x)\backslash E_1(x)$. On the other hand, since $\text{\rm dim}(E_1(x))=1\leq\text{\rm dim}(\widetilde{E_1}(x))$, $(\widetilde{E_1}(x))\nsubseteq E_1(x)$. Thus, there is a nonzero vector $v\in E_1(x)\backslash\widetilde{E_1}(x)$. Therefore, there is a vector decomposition $$u=u_1+u_2,\text{ where $u_1\in E_1(x)$, $u_2\in E_2(x)$ and $u_2\neq 0$},$$ of $u$ under the splitting $\mathcal{N}_x=E_1(x)\oplus E_2(x)$. Similarly, there is a vector decomposition $$v=v_1+v_2,\text{ where $v_1\in\widetilde{E_1}(x)$, $v_2\in\widetilde{E_2}(x)$ and $v_2\neq 0$},$$ of $v$ under the splitting $\mathcal{N}_x=\widetilde{E_1}(x)\oplus\widetilde{E_2}(x)$. Thus, 
\begin{equation*}
\begin{aligned}
   \dfrac{\lvert\psi^*_t(u)\rvert}{\lvert\psi^*_t(v)\rvert}&=\dfrac{\lvert\psi^*_t(u_1)+\psi^*_t(u_2)\rvert}{\lvert\psi^*_t(v)\rvert}\geq\dfrac{\lvert\psi^*_t(u_2)\lvert-\rvert\psi^*_t(u_1)\rvert}{\lvert\psi^*_t(v)\rvert}\geq\dfrac{e^{(\lambda_2-\epsilon)t}\lvert u_2\rvert-e^{(\lambda_1+\epsilon)t}\lvert u_1\rvert}{e^{(\lambda_1+\epsilon)t}\lvert v\rvert}\\&=\dfrac{e^{(\lambda_2-\lambda_1-2\epsilon)t}\lvert u_2\rvert-\lvert u_1\rvert}{\lvert v\rvert}\rightarrow+\infty\quad\text{(as $t\rightarrow+\infty$)},
\end{aligned}
\end{equation*}

   and 
\begin{equation*}
\begin{aligned}
   \dfrac{\lvert\psi^*_t(v)\rvert}{\lvert\psi^*_t(u)\rvert}&=\dfrac{\lvert\psi^*_t(v_1)+\psi^*_t(v_2)\rvert}{\lvert\psi^*_t(u)\rvert}\geq\dfrac{\lvert\psi^*_t(v_2)\lvert-\rvert\psi^*_t(v_1)\rvert}{\lvert\psi^*_t(u)\rvert}\geq\dfrac{\lvert\psi^*_t(v_2)\lvert-Ce^{-\lambda t}\lvert\psi^*_t(v_2)\lvert}{\lvert\psi^*_t(u)\rvert}\\&=\left(1-Ce^{-\lambda t}\right)\dfrac{\lvert\psi^*_t(v_2)\lvert}{\lvert\psi^*_t(u)\rvert}=\left(1-Ce^{-\lambda t}\right)C^{-1}e^{\lambda t}\dfrac{\lvert v_2\rvert}{\lvert u\rvert}\rightarrow+\infty\quad\text{(as $t\rightarrow+\infty$)}.
\end{aligned}
\end{equation*}
   This leads to a contradiction. Therefore, our assumption that $E_1(x)\nsubseteq\widetilde{E_1}(x)$ is not valid. It means that $E_1(x)=\widetilde{E_1}(x)$. Thus, the two splitting $\mathcal{N}_\Gamma=E_1\oplus E_2$ and $\mathcal{N}_\Gamma=\widetilde{E_1}\oplus\widetilde{E_2}$ coincide. It completes the proof of the claim.          
\end{proof}
	
   Let $C(M)$ be the set of all continuous functions on $M$, $\{f_i\}_{i=1}^{+\infty}$ be a countable dense subset of $C(M)$. For any measures $\mu,\nu\in\mathcal{M}(X)$, define $$d_{\mathcal{M}}(\mu,\nu)=\sum_{i=1}^{+\infty}\dfrac{\lvert\int_Mf_id\mu-\int_Mf_id\nu\rvert}{2^i\lVert f_i\rVert}.$$ Thus, $d_{\mathcal{M}}(\cdot,\cdot)$ is a metric which niduces the weak$^*$ topology in the set $\mathcal{M}(X)$. Given an ergodic hyperbolic invariant regular measure $\mu$, for every $\varepsilon>0$, we will prove that there exists a periodic measure $\nu$ such that $$d_{\mathcal{M}}(\mu,\nu)<\varepsilon.$$ Firstly, choose $n$ large enough such that
\begin{equation*}
   \sum_{i=n+1}^{+\infty}\dfrac{\lvert\int_Mf_id\mu-\int_Mf_id\nu\rvert}{2^i\lVert f_i\rVert}\lneq\sum_{i=n+1}^{+\infty}\dfrac{1}{2^{i-1}}<\dfrac{\varepsilon}{2}.\tag{\ddag}
\end{equation*}   
   According to the Birkhoff Ergodic Theorem, there is a full $\mu$-measure $\varphi$-invariant set $\Lambda_B$ such that for every $x\in\Lambda_B$ and every $f\in C(M)$, one has that $$\lim_{T\rightarrow+\infty}\dfrac{1}{T}\int^{T}_{0}f(\varphi_t(x))dt=\int_Mfd\mu.$$ Thus, there exists $T_1>1$ such that for every $i=1,2,\cdots,n$, every $x\in\Lambda_B$ and every $T>T_1$, one has that
\begin{equation*}
   \left\lvert\dfrac{1}{T}\int_{0}^{T}f_i(\varphi_t(x))dt-\int_Mf_id\mu\right\rvert<\dfrac{\varepsilon}{4n}\min_{1\leq i\leq n}\{2^i\lVert f_i\rVert\}.\tag{\ddag\ddag}
\end{equation*}      

   By the claim, the hyperbolic Oseledec splitting $\mathcal{N}_\Gamma=E_1\oplus E_2$ is a dominated splitting. Let $$\chi=\min\{\lvert\lambda_1\rvert,~\lvert\lambda_2\rvert\},$$ then by Lemma \ref{time}, for every $0<\epsilon\ll\chi$ and every $\delta\in(0,1)$, there is a positive number $L=L(\epsilon/2,\delta)$ such that
\begin{itemize}
   \item for every $T_0\geq L$, there exists a measurable set $\Gamma^{T_0}=\Gamma^{T_0}(\epsilon/2,\delta)\subset\Gamma$ with $\mu\left(\Gamma^{T_0}\right)\geq 1-\delta$;
   		
   \item there is a natural number $N=N(T_0)$ such that for every $n\geq N$ and any $x\in\Gamma^{T_0}$, we have that $$\prod_{i=0}^{n-1}\left\lVert\psi^*_{T_0}|_{E_1(\varphi_{iT_0}(x))}\right\rVert\leq e^{-(\chi-\epsilon/2)nT_0},~\prod_{i=0}^{n-1}m\left(\psi^*_{T_0}|_{E_2(\varphi_{iT_0}(x))}\right)\geq e^{(\chi-\epsilon/2)nT_0},$$ $$\dfrac{\left\lVert\psi^*_{T_0}|_{E_1(x)}\right\rVert}{m\left(\psi^*_{T_0}|_{E_2(x)}\right)}\leq e^{-T_0(\chi-\epsilon/2)}.$$     	  
\end{itemize}
   Fix an integer $T_0\geq\max\{T_1,~ L(\epsilon/2,\delta)\}$, let $\eta_0=(\chi-\epsilon/2)T_0$, for $k\in\mathbb{N}^+$, we can define the {\bf Pesin block } $\Lambda^{T_0}_{\eta_0}(k)$ as:
\begin{equation*}
\begin{aligned}
   \Lambda^{T_0}_{\eta_0}(k)=\Bigg\{x\in\Gamma:&\prod_{i=0}^{n-1}\left\lVert\psi^*_{T_0}|_{E_1(\varphi_{iT_0}(x))}\right\rVert\leq ke^{-n\eta_0},~\forall~ n\geq 1,\\ &\prod_{i=0}^{n-1}m\left(\psi^*_{T_0}|_{E_2(\varphi_{iT_0}(x))}\right)\geq ke^{n\eta_0},~\forall~ n\geq 1,~d(x,\text{Sing}(X))\geq\dfrac{1}{k}\Bigg\}.
\end{aligned}   
\end{equation*}
   According to \cite[Proposition 5.3]{WYZ}, $\Lambda^{T_0}_{\eta_0}(k)$ is a compact set and $$\mu\left(\Lambda^{T_0}_{\eta_0}(k)\right)\rightarrow\mu\left(\Gamma^{T_0}\right)\text{ as } k\rightarrow+\infty.$$ 

   Denote $$K=\max_{1\leq i\leq n}\left\{\lVert f_i\rVert\right\}.$$ Take $\gamma>0$ small enough, we may assume that $$(2 K+1)\gamma<\dfrac{\varepsilon}{4n}\cdot\min_{1\leq i\leq n}\{2^i\lVert f_i\rVert\}.$$ Recall that $\lvert X(x)\rvert\leq K_0$, for every $x\in M$. Since $f_i$ ($1\leq i\leq n$) is uniformly continuous on $M$, there exists $\xi\in[0,\gamma]$ small enough such that for every integer $i\in[1,n]$ and every $x,y\in M$ with $d(x,y)\leq\xi K_0$, we have that $$\left\lvert f_i(x)-f_i(y)\right\rvert<\gamma.$$ Fix $k$ large enough such that $\mu\left(\Lambda^{T_0}_{\eta_0}(k)\right)$ is sufficiently close to $1$. For this fixed $k$, there is $j=j(k)\in\mathbb{N}^+$ such that $k<e^{\frac{jT_0\epsilon}{2}}$. Thus, for every $x\in\Lambda^{T_0}_{\eta_0}(k)$, one has that $$\prod_{i=0}^{n-1}\left\lVert\psi^*_{jT_0}\lvert_{E_1(\varphi_{ijT_0}(x))}\right\rVert\leq e^{-(\chi-\epsilon)njT_0},~\prod_{i=0}^{n-1}m\left(\psi^*_{jT_0}\lvert_{E_2(\varphi_{ijT_0}(x))}\right)\geq e^{(\chi-\epsilon)njT_0},\quad\text{for }\forall~ n\geq 1.$$ Set $\eta=(\chi-\epsilon)jT_0$, $T=jT_0$, we consider the set   
\begin{equation*}
\begin{aligned}
   \Lambda^{T}_{\eta}(k)=\Bigg\{x\in\Gamma:&\prod_{i=0}^{n-1}\left\lVert\psi^*_{T}\lvert_{E_1(\varphi_{iT}(x))}\right\rVert\leq e^{-n\eta},~\forall~ n\geq 1,\\ &\prod_{i=0}^{n-1}m\left(\psi^*_{T}\lvert_{E_2(\varphi_{iT}(x))}\right)\geq e^{n\eta},~\forall~ n\geq 1,~d(x,\text{Sing}(X))\geq\dfrac{1}{k}\Bigg\}.
\end{aligned}   
\end{equation*}   
   Take a point $x_0\in\Lambda^T_\eta(k)$, by the continuity of $X(x)$, there is $r>0$ small enough such that for every $x\in B(x_0,r)$, we have that $$B(x_0,r)\subset B(x,\xi\lvert X(x)\rvert).$$ For $\varepsilon'=\min\{\varepsilon,~\xi\}>0$ and every $\alpha\in(0,1/k)$, we have a number $\delta_0>0$ by Theorem \ref{Shadow}. Since $\mu\left(B(x_0,r)\cap\Lambda^T_\eta(k)\cap\Lambda_B\right)>0$, by the Poincar\'{e} Recurrence Theorem, there is a point $y\in B(x_0,r)\cap\Lambda^T_\eta(k)\cap\Lambda_B$ and an integer $l$ such that $\varphi_{lT}(y)\in B(x_0,r)\cap\Lambda^T_\eta(k)\cap\Lambda_B$ with $d(y,\varphi_{lT}(y))<\delta_0$. Thus, we get an $(\eta,T)$-$\psi^*_t$-quasi hyperbolic orbit segment $\varphi_{[0,lT]}(y)$ satisfying
\begin{itemize}
   \item $d(y,\text{Sing}(X))>\alpha$ and $d(\varphi_{lT}(y),\text{Sing}(X))>\alpha$;
   		
   \item $y\in\Lambda^T_\eta(k)$, $\varphi_{lT}(y)\in\Lambda^T_\eta(k)$ and $d(y,\varphi_{lT}(y))<\delta_0$. 
\end{itemize}
   By Theorem \ref{Shadow}, there exists a $C^1$ strictly increasing function $\theta:[0,lT]\rightarrow\mathbb{R}$ and a periodic point $p\in M$ such that 
\begin{enumerate}
   \item[(1)] $\theta(0)=0$ and $1-\gamma\leq1-\xi<1-\varepsilon'<\theta'(t)<1+\varepsilon'<1+\xi\leq1+\gamma$, for every $t\in[0,lT]$; 
   
   \item[(2)] $p$ is periodic: $\varphi_{\theta(lT)}(p)=p$;
   
   \item[(3)] $d(\varphi_t(y),\varphi_{\theta(t)}(p))<\varepsilon'\lvert X(\varphi_t(x))\rvert<\xi\lvert X(\varphi_t(x))\rvert$, for every $t\in[0,lT]$.
\end{enumerate}   
   Therefore, for every $i=1,2,\cdots,n$, we have that 
\begin{equation*}
\begin{aligned}
   \left\lvert\dfrac{1}{\theta(lT)}\int_{0}^{\theta(lT)}f_i(\varphi_t(p))dt-\dfrac{1}{lT}\int_{0}^{\theta(lT)}f_i(\varphi_t(p))dt\right\rvert&\leq\left\lvert\dfrac{\theta(lT)}{lT}-1\right\rvert\cdot\dfrac{1}{\theta(lT)}\int_{0}^{\theta(lT)}f_i(\varphi_t(p))dt\\&\leq\gamma K.
\end{aligned}   
\end{equation*} 
   and    
\begin{equation*}
\begin{aligned}
   &\left\lvert\dfrac{1}{lT}\int_{0}^{\theta(lT)}f_i(\varphi_s(p))ds-\dfrac{1}{lT}\int_{0}^{lT}f_i(\varphi_t(y))dt\right\rvert=\dfrac{1}{lT}\left\lvert\int_{0}^{lT}f_i(\varphi_{\theta(t)}(p))d\theta(t)-\int_{0}^{lT}f_i(\varphi_t(y))dt\right\rvert\\&\leq\dfrac{1}{lT}\left\lvert\int_{0}^{lT}f_i(\varphi_{\theta(t)}(p))(\theta'(t)-1)dt\right\rvert+\dfrac{1}{lT}\left\lvert\int_{0}^{lT}\left[f_i(\varphi_{\theta(t)}(p))-f_i(\varphi_t(y))\right]dt\right\rvert\\&\leq\gamma K+\gamma.
\end{aligned}   
\end{equation*} 
   Thus, $$\left\lvert\dfrac{1}{\theta(lT)}\int_{0}^{\theta(lT)}f_i(\varphi_t(p))dt-\dfrac{1}{lT}\int_{0}^{lT}f_i(\varphi_t(y))dt\right\rvert<\dfrac{\varepsilon}{4n}\cdot\min_{1\leq i\leq n}\{2^i\lVert f_i\rVert\}.$$
   	
   Denote by $\mu_p$ the invariant measure on the periodic orbit ${\rm Orb}(p)$. From the equation $(\ddag)$ and $(\ddag\ddag)$, we have that    
\begin{equation*}
\begin{aligned}
   &d_{\mathcal{M}}(\mu,\mu_p)=\sum_{i=1}^{+\infty}\dfrac{\lvert\int_Mf_id\mu-\int_Mf_id\mu_p\rvert}{2^i\lVert f_i\rVert}=\sum_{i=1}^{n}\dfrac{\lvert\int_Mf_id\mu-\int_Mf_id\mu_p\rvert}{2^i\lVert f_i\rVert}+\sum_{i=n+1}^{+\infty}\dfrac{\lvert\int_Mf_id\mu-\int_Mf_id\mu_p\rvert}{2^i\lVert f_i\rVert}\\&\leq\sum_{i=1}^{n}\dfrac{\left\lvert\int_Mf_id\mu-\dfrac{1}{lT}\int_{0}^{lT}f_i(\varphi_t(y))dt+\dfrac{1}{lT}\int_{0}^{lT}f_i(\varphi_t(y))dt-\int_Mf_id\mu_p\right\rvert}{2^i\lVert f_i\rVert}+\sum_{i=n+1}^{+\infty}\dfrac{1}{2^{i-1}}\\&\leq\sum_{i=1}^{n}\dfrac{\left\lvert\int_Mf_id\mu-\dfrac{1}{lT}\int_{0}^{lT}f_i(\varphi_t(y))dt\right\rvert+\left\lvert\dfrac{1}{lT}\int_{0}^{lT}f_i(\varphi_t(y))dt-\dfrac{1}{\theta(lT)}\int_{0}^{\theta(lT)}f_i(\varphi_t(p))dt\right\rvert}{2^i\lVert f_i\rVert}+\dfrac{\varepsilon}{2}\\&<\sum_{i=1}^{n}\dfrac{\dfrac{\varepsilon}{4n}\cdot\min\limits_{1\leq i\leq n}\left\{2^i\lVert f_i\rVert\right\}+\dfrac{\varepsilon}{4n}\cdot\min\limits_{1\leq i\leq n}\left\{2^i\lVert f_i\rVert\right\}}{2^i\lVert f_i\rVert}+\dfrac{\varepsilon}{2}\leq\sum_{i=1}^{n}\dfrac{\varepsilon}{2n}+\dfrac{\varepsilon}{2}<\varepsilon.
\end{aligned}   
\end{equation*} 
   This completes the proof of Theorem \ref{ThmA}.         
\end{proof}

\section{ Approximation of Lyapunov exponents: proof of Theorem \ref{ThmB}}
   In this section, we will give the proof of Theorem \ref{ThmB}. Let $\varphi_t$ be the $C^1$ flow generated by a star vector field $X\in\mathfrak{X}^*(M^3)$ on three-dimensional manifolds, $\mu$ be an invariant hyperbolic ergodic measure of $\varphi_t$ which is not supported on singularities. Since every star vector field admits a dominated splitting $\mathcal{N}_{\text{supp}(\mu)\backslash\text{Sing}(X)}=E\oplus F$ w.r.t the scaled linear Poincar\'{e} flow $\psi^*_t$ such that $\text{\rm dim}(E)=\text{\rm Ind}(\mu)$, the Oseledec splitting $\mathcal{N}_\Gamma=E_1\oplus E_2$ of an invariant hyperbolic ergodic measure $\mu$ is a dominated splitting. According to the similar discussions on Lyapunov exponents as three different cases in the proof of Theorem \ref{ThmA}, we have that $$\lim_{t\to\pm\infty}\dfrac{1}{t}\log\parallel\psi^*_t(v) \parallel=\lambda_j,~\forall~v\in E_j,~v\neq 0,~j=1,2,$$ where $\lambda_1<0<\lambda_2$. To complete the proof of Theorem \ref{ThmB}, we present two propositions as Propositions \ref{TLLE} and \ref{TSLE}.   
   
\begin{Proposition}\label{TLLE}{\rm (The approximation of largest Lyapunov exponent)}
   Let $\mu$ be an ergodic hyperbolic invariant regular measure of the $C^1$ star vector field $X\in\mathfrak{X}^1(M)$ on a compact three dimensional smooth Riemannian manifold, $\lambda_1\leq\lambda_2$ be the Lyapunov exponents of $\mu$ with respect to the scaled linear Poincar\'{e} flow $\psi^*_t$. For every $\epsilon>0$, there is a hyperbolic periodic point $p$ of $\varphi_t$ such that $$\lvert\lambda_2-\lambda_2(p)\rvert<\epsilon,$$ where $\lambda_1(p)\leq\lambda_2(p)$ are the Lyapunov exponents of $\mu_p$ with respect to the scaled linear Poincar\'{e} flow $\psi^*_t$.     	
\end{Proposition}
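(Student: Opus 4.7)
The plan is to re-run the construction from the proof of Theorem \ref{ThmA}, but track quantitative information about the product $\prod_i \|\psi^*_T|_{E_2(\varphi_{iT}(y))}\|$ along the recurrent orbit and transfer it to the shadowing periodic orbit. Fix $\epsilon>0$. Applying Lemma \ref{time} with tolerance $\epsilon/4$ to the bundle $E_2$ yields a sampling time $T$, a natural number $N$, and a set $\Gamma^T$ of $\mu$-measure close to $1$ on which
\[
e^{(\lambda_2-\epsilon/4)nT}\le \prod_{i=0}^{n-1} m\bigl(\psi^*_T|_{E_2(\varphi_{iT}(x))}\bigr)\le \prod_{i=0}^{n-1}\bigl\|\psi^*_T|_{E_2(\varphi_{iT}(x))}\bigr\|\le e^{(\lambda_2+\epsilon/4)nT}
\]
for every $x\in\Gamma^T$ and every $n\ge N$. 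I would then choose a compact Pesin block $\Lambda^T_\eta(k)\subset\Gamma^T$ on which $d(\cdot,\mathrm{Sing}(X))\ge 1/k$, apply Poincar\'e recurrence inside it to obtain $y\in\Lambda^T_\eta(k)$ whose orbit returns $\delta_0$-close to itself at time $lT$ with $l$ large, and feed the $(\eta,T)$-quasi-hyperbolic orbit arc $\varphi_{[0,lT]}(y)$ into Theorem \ref{Shadow} to produce a periodic point $p$ with period $\pi(p)=\theta(lT)$ satisfying $|\theta'-1|<\varepsilon'$ and $d(\varphi_t(y),\varphi_{\theta(t)}(p))<\varepsilon'|X(\varphi_t(y))|$.

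The crux is to replace the product along $y$ with the analogous product along $p$. Because $X$ is star, $p$ is automatically hyperbolic, and the Claim inside the proof of Theorem \ref{ThmA} identifies the Oseledec splitting $E_1\oplus E_2$ on $\Gamma$ with the continuous dominated splitting $\widetilde{E_1}\oplus\widetilde{E_2}$. Consequently, the one-dimensional unstable direction $E^u(\varphi_{\theta(iT)}(p))$ of the hyperbolic periodic orbit is forced close to $E_2(\varphi_{iT}(y))$ once the shadowing is tight enough and $k$ is large, since the unstable bundle of any orbit that stays in a uniform cone field over long time aligns exponentially with the dominating direction. Working in the Liao scaled charts of Section $3$ and using Lemmas \ref{Dlgx}, \ref{Eugx}, \ref{well-def} and \ref{uni-con}, the uniform $C^1$-continuity of the scaled sectional Poincar\'e map away from singularities lets me conclude that
\[
\bigl|\log\|\psi^*_T|_{E^u(\varphi_{\theta(iT)}(p))}\|-\log\|\psi^*_T|_{E_2(\varphi_{iT}(y))}\|\bigr|<\frac{\epsilon T}{4}
\]
for each $i=0,\dots,l-1$, provided $\varepsilon'$ was chosen small enough relative to $\epsilon$ and $k$. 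Summing and dividing by $\pi(p)\in((1-\varepsilon')lT,(1+\varepsilon')lT)$ yields $|\lambda_2(p)-\lambda_2|<\epsilon$.

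The main obstacle is this last comparison step: the constants controlling continuity of $\psi^*_T$ and of the dominated splitting blow up as one approaches $\mathrm{Sing}(X)$, so one cannot directly invoke uniform continuity on the whole manifold. The Pesin block condition $d(x,\mathrm{Sing}(X))\ge 1/k$ is precisely what keeps these constants uniform throughout the shadowing arc, and working with the scaled linear Poincar\'e flow $\psi^*_t$ rather than $\psi_t$ is what makes the estimates uniform in spite of $|X(x)|$ possibly being small along the orbit. A secondary technical issue is the time reparametrization $\theta$: because $\theta'$ differs from $1$ by at most $\varepsilon'$, the period $\pi(p)=\theta(lT)$ introduces a relative error of order $\varepsilon'$ in $\lambda_2(p)$ compared to computing it with denominator $lT$, but choosing $\varepsilon'\ll\epsilon$ absorbs this into the final bound.
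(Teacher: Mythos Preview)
Your strategy is sound and would work, but it differs from the paper's route in a way worth noting. The paper does \emph{not} compare $\|\psi^*_T|_{E^u(\varphi_{\theta(iT)}(p))}\|$ directly with $\|\psi^*_T|_{E_2(\varphi_{iT}(y))}\|$. Instead it treats the two inequalities separately. For the upper bound it writes
\[
\psi^*_{\theta(lT)}\big|_{\mathcal N_p}=\psi^*_T\big|_{\mathcal N_{\varphi_{(l-1)T}(y)}}\circ T_{l-1}(y,p)\circ\cdots\circ\psi^*_T\big|_{\mathcal N_y}\circ T_0(y,p),
\]
with correction operators $T_i(y,p)=(\psi^*_T|_{\mathcal N_{\varphi_{(i+1)T}(y)}})^{-1}\circ\psi^*_{\theta((i+1)T)-\theta(iT)}|_{\mathcal N_{\varphi_{\theta(iT)}(p)}}$, shows $|\log\|T_i(y,p)\||<\epsilon/2$, and then uses only that $\lambda_2(p)\le\frac{1}{\theta(lT)}\log\|\psi^*_{\theta(lT)}|_{\mathcal N_p}\|$ (top exponent bounded by the full norm). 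For the lower bound it builds a $\psi^*_T\circ T_i$-invariant unstable cone $\mathcal C_\rho$ around $E_2$, tracks the growth of any vector in $\mathcal C_\rho(p)$ under the factored map, and observes that $E^u(p)\subset\mathcal C_\rho(p)$. Your single-step bundle comparison collapses these two arguments into one, which is legitimate here because both $E_2$ and $E^u$ are one-dimensional; but the assertion that $E^u(\varphi_{\theta(iT)}(p))$ is close to $E_2(\varphi_{iT}(y))$ is exactly the content of the paper's cone argument, so you are not bypassing it, only repackaging it.

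One point in your write-up is wrong and should be fixed: the Pesin block condition $d(x,\mathrm{Sing}(X))\ge 1/k$ holds only for points \emph{in} $\Lambda^T_\eta(k)$, in particular for the endpoints $y$ and $\varphi_{lT}(y)$; the intermediate points $\varphi_{iT}(y)$ may lie arbitrarily close to $\mathrm{Sing}(X)$. So this condition does \emph{not} keep the continuity constants uniform along the arc. The uniformity you need comes, as you then correctly say, from working with the scaled flow $\psi^*_t$ (equivalently, from the compactification via the extended linear Poincar\'e flow $\widetilde\psi_t$ on $\widetilde\Lambda$), together with the scaled shadowing estimate $d(\varphi_t(y),\varphi_{\theta(t)}(p))<\varepsilon'|X(\varphi_t(y))|$. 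Also note that the block map along $p$ is $\psi^*_{\theta((i+1)T)-\theta(iT)}$, not $\psi^*_T$; the paper handles the discrepancy by the extra bound $|\theta(iT)-iT|\le N\,d(y,\varphi_{lT}(y))$ from \cite[Proposition~4.4]{WYZ}, which you should invoke rather than absorb implicitly.
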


\begin{proof}
   Let $$\chi=\min\{\lvert\lambda_1\rvert,~\lambda_2\}.$$ Since the Oseledec splitting $\mathcal{N}_\Gamma=E_1\oplus E_2$ is a dominated splitting, by Lemma \ref{time}, for every $0<\epsilon/4\ll\chi$ and every $\delta_1\in(0,1/9)$, there is a positive number $L=L(\epsilon/4,\delta_1)$ such that
\begin{itemize}
   \item for every $T\geq L$, there exists a measurable set $\Gamma^T=\Gamma^T(\epsilon/4,\delta_1)\subset\Gamma$ with $\mu\left(\Gamma^T\right)\geq 1-\delta_1$;
    	
   \item there is a natural number $N=N(T)$ such that for every $n\geq N$ and every $x\in\Gamma^T$, we have that $$\prod_{i=0}^{n-1}\left\lVert\psi^*_T|_{E_1(\varphi_{iT}(x))}\right\rVert\leq e^{-(\chi-\epsilon/4)nT},~\prod_{i=0}^{n-1}m\left(\psi^*_T|_{E_2(\varphi_{iT}(x))}\right)\geq e^{(\chi-\epsilon/4)nT}.$$ $$\dfrac{\left\lVert\psi^*_T|_{E_1}\right\rVert}{m\left(\psi^*_T\|_{E_2}\right)}\leq e^{-T(\chi-\epsilon/4)}.$$     	  
\end{itemize}
   Fix an integer $T_0\geq L(\epsilon/4,\delta_1)$, let $\eta_0=(\chi-\epsilon/4)T_0$, for $k\in\mathbb{N}^+$, we define the {\bf Pesin block } $\Lambda^{T_0}_{\eta_0}(k)$ as:
\begin{equation*}
\begin{aligned}
   \Lambda^{T_0}_{\eta_0}(k)=\Bigg\{x\in\Gamma:&\prod_{i=0}^{n-1}\left\lVert\psi^*_{T_0}|_{E_1(\varphi_{iT_0}(x))}\right\rVert\leq ke^{-n\eta_0},~\forall~ n\geq 1,\\ &\prod_{i=0}^{n-1}m\left(\psi^*_{T_0}|_{E_2(\varphi_{iT_0}(x))}\right)\geq ke^{n\eta_0},~\forall~ n\geq 1,~d(x,\text{Sing}(X))\geq\dfrac{1}{k}\Bigg\}.
\end{aligned}   
\end{equation*}
   According to \cite[Proposition 5.3]{WYZ}, $\Lambda^{T_0}_{\eta_0}(k)$ is a compact set and $$\mu\left(\Lambda^{T_0}_{\eta_0}(k)\right)\rightarrow\mu\left(\Gamma^{T_0}\right)\text{ as } k\rightarrow+\infty.$$ Therefore, one can fix $k$ large enough, such that $\mu\left(\Lambda^{T_0}_{\eta_0}(k)\right)>1-2\delta_1>0$. For this fixed $k$, there is $j=j(k)\in\mathbb{N}^+$ such that $k<e^{\frac{jT_0\epsilon}{4}}$. Thus, for every $x\in\Lambda^{T_0}_{\eta_0}(k)$, one has that $$\prod_{i=0}^{n-1}\left\lVert\psi^*_{jT_0}|_{E_1(\varphi_{ijT_0}(x))}\right\rVert\leq e^{-(\chi-\epsilon/2)njT_0},~\prod_{i=0}^{n-1}m\left(\psi^*_{jT_0}|_{E_2(\varphi_{ijT_0}(x))}\right)\geq e^{(\chi-\epsilon/2)njT_0},\quad\text{for }\forall~ n\geq 1.$$ Set $\eta=(\chi-\epsilon/2)jT_0$, $T=jT_0$, we consider the set
\begin{equation*}
\begin{aligned}
   \Lambda^{T}_{\eta}(k)=\Bigg\{x\in\Gamma:&\prod_{i=0}^{n-1}\left\lVert\psi^*_{T}\lvert_{E_1(\varphi_{iT}(x))}\right\rVert\leq e^{-n\eta},~\forall~ n\geq 1,\\ &\prod_{i=0}^{n-1}m\left(\psi^*_{T}\lvert_{E_2(\varphi_{iT}(x))}\right)\geq e^{n\eta},~\forall~ n\geq 1,~d(x,\text{Sing}(X))\geq\dfrac{1}{k}\Bigg\}.
\end{aligned}   
\end{equation*}       
   
   Now, we prove the proposition by three steps.	
	
   {\bf Step $1$}: In this step, we will prove that there exists a periodic point $p$ such that $$\lambda_2(p)\leq\lambda_2+\epsilon.$$ 

   For $\epsilon/4>0$, there is a number $\zeta>0$, such that $$\left\lvert\log(1+\zeta)\right\rvert<\dfrac{\epsilon}{4},\text{ and }\left\lvert\log(1-\zeta)\right\rvert<\dfrac{\epsilon}{4}.$$ For every $x_1,~x_2\in\Lambda^{T}_{\eta}\left(k\right)$, denote by $${\rm I}(x_1,x_2)=\left(\psi^*_T|_{\mathcal{N}(x_1)}\right)^{-1}\circ\left(\psi^*_T|_{\mathcal{N}(x_2)}\right).$$ Since the Oseledec splitting $\mathcal{N}_\Gamma=E_1\oplus E_2$ is a dominated splitting, it is easy to see that ${\rm I}(x_1,x_2)$ approach to identity if $x_1$ and $\varphi_T(x_2)$ close enough. Let $\sigma_1>0$ satisfy that $$d(x_1,x_2)\leq\sigma_1\Longrightarrow 1-\zeta\leq\lVert {\rm I}(x_1,x_2)\rVert\leq1+\zeta\Longrightarrow\lvert\log\lVert {\rm I}(x_1,x_2)\rVert\rvert<\dfrac{\epsilon}{4}.$$ Since $\Lambda^{T}_{\eta}$ is compact, $\sigma_1$ always exists. Fix $t_0$, the norm $$\lVert\psi^*_{t_0}\rVert=\sup\left\{\lvert\psi^*_{t_0}(v)\rvert:~v\in\mathcal{N},~\lvert v\rvert=1\right\}$$ is uniformly upper bounded on $\mathcal{N}$, and the norm $$m\left(\psi^*_{t_0}\right)=\inf\left\{\lvert\psi^*_{t_0}(v)\rvert:~v\in\mathcal{N},~\lvert v\rvert=1\right\}$$ is uniformly bounded from $0$ on $\mathcal{N}$. Without loss of generality, we may assume that $$J_1<m\left(\psi^*_{t_0}\right)<\lVert\psi^*_{t_0}\rVert<J_2.$$ Since $\varphi_t$ is a $C^1$ flow, for $\zeta/J_2>0$, there is $\sigma_2>0$ such that for every $s\in(t_0-\sigma_2,~t_0+\sigma_2)$, one has that $\psi^*_{t_0}|_{\mathcal{N}_{\varphi_s(x)}}$ is sufficiently close to $\psi^*_{t_0}|_{\mathcal{N}_{\varphi_{t_0}(x)}}$ and $$\lVert\psi^*_{t_0}|_{\mathcal{N}_x}\rVert-\zeta/J_2\leq\lVert\psi^*_s|_{\mathcal{N}_x}\rVert\leq\lVert\psi^*_{t_0}|_{\mathcal{N}_x}\rVert+\zeta/J_1,\quad\text{for every }x\in\Lambda^{T}_{\eta}(k).$$ Thus, we have that
\begin{equation*}
\begin{aligned}
   1-\zeta&\leq\left\lVert\left(\psi^*_{t_0}|_{\mathcal{N}_{\varphi_s(x)}}\right)^{-1}\right\rVert\left(\lVert\psi^*_{t_0}|_{\mathcal{N}_x}\rVert-\zeta/J_2\right)\leq\left\lVert\left(\psi^*_{t_0}|_{\mathcal{N}_{\varphi_s(x)}}\right)^{-1}\circ\psi^*_s|_{\mathcal{N}_x}\right\rVert\\&\leq\left\lVert\left(\psi^*_{t_0}|_{\mathcal{N}_{\varphi_s(x)}}\right)^{-1}\right\rVert\left(\lVert\psi^*_{t_0}|_{\mathcal{N}_x}\rVert+\zeta/J\right)\leq 1+\zeta
\end{aligned}
\end{equation*}   
   Then, $$\left\lvert\log\left\lVert\left(\psi^*_{t_0}|_{\mathcal{N}_{\varphi_s(x)}}\right)^{-1}\circ\psi^*_s|_{\mathcal{N}_x}\right\rVert\right\rvert<\dfrac{\epsilon}{4}.$$
   
   Take $y\in\Lambda^{T}_{\eta}(k)$, by Poincar\'{e} Recurrence Theorem, there is an increasing integer sequence $\{l_n\}$ such that $$d(y,\varphi_{l_nT}(y))\longrightarrow 0.$$ For $0<\varepsilon<\min\left\{\dfrac{\epsilon}{4},\dfrac{\sigma_1}{K_0},\sigma_2\right\}$ and $\alpha\in(0,1/k)$, we get a constant $\delta_2=\delta_2(\varepsilon,\alpha)$ by Theorem \ref{Shadow}. For $l_n$ large enough, we can obtain a $(\eta,T)$-$\psi^*_t$-quasi hyperbolic orbit segment $\varphi_{[0,l_nT]}(y)$ which satisfies that
\begin{itemize}
   \item $d(y,\text{Sing}(X))>\alpha$ and $d(\varphi_{l_nT}(y),\text{Sing}(X))>\alpha$;
   	
   \item $y\in\Lambda^{T}_{\eta}(k)$, $\varphi_{l_nT}(y)\in\Lambda^{T}_{\eta}(k)$ and $d(y,\varphi_{l_nT}(y))<\delta_2$. 
\end{itemize}
   According to Theorem \ref{Shadow}, there is a $C^1$ strictly increasing function $\theta:[0,l_nT]\rightarrow\mathbb{R}$ and a periodic point $p\in M$ such that
\begin{itemize}
   \item[(1)] $\theta(0)=0$ and $1-\varepsilon<\theta'(t)<1+\varepsilon$, for every $t\in[0,l_nT]$;
   	
   \item[(2)] $p$ is periodic: $\varphi_{\theta(l_nT)}(p)=p$;
   	
   \item[(3)] $d(\varphi_t(y),\varphi_{\theta(t)}(p))<\varepsilon\lvert X(\varphi_t(y))\rvert$, for every $t\in[0,l_nT]$.
\end{itemize} 
   Moreover, by the Proposition 4.4 in \cite{WYZ}, there is constant  $N=N(\eta,T)$ such that $$\lvert\theta(iT)-iT\rvert\leq Nd(y,\varphi_{l_nT}(y)),\quad\text{for }i=1,2,\cdots,l_n.$$ Take $l_n$ large enough, such that $$d(y,\varphi_{l_nT}(y))<\dfrac{\sigma_2}{2N}.$$ We will write $l_n$ simply $l$ when no confusion can arise. Therefore, we have that $$\dfrac{1}{lT}\sum_{i=0}^{l-1}\log\left\lVert\psi^*_{T}\lvert_{\mathcal{N}_{\varphi_{iT}(y)}}\right\rVert\leq\lambda_2+\dfrac{\epsilon}{2}.$$ To calculate the Lyapunov exponents of the periodic atomic measure $\mu_p$, we give the following claim.  
   
\begin{claim}
   For any $i=0,1,\cdots,l-1$, let $$T_i(y,p)=\left(\psi^*_T|_{\mathcal{N}_{\varphi_{(i+1)T}(y)}}\right)^{-1}\circ\psi^*_{\theta((i+1)T)-\theta(iT)}|_{\mathcal{N}_{\varphi_{\theta(iT)}(p)}},$$ then we have that $$\left\lvert\log\left\lVert T_i(y,p)\right\rVert\right\rvert<\dfrac{\epsilon}{2}.$$  	
\end{claim}

\begin{proof}[proof of the claim]
   For every $i=0,1,\cdots,l-1$, we have that $$\psi^*_T|_{\mathcal{N}_{\varphi_{\theta((i+1)T)-T}(p)}}\circ\left(\psi^*_T|_{\mathcal{N}_{\varphi_{\theta((i+1)T)}(p)}}\right)^{-1}$$ is identity. Thus, we consider $$\left(\psi^*_T|_{\mathcal{N}_{\varphi_{(i+1)T}(y)}}\right)^{-1}\circ\psi^*_T|_{\mathcal{N}_{\varphi_{\theta((i+1)T)-T}(p)}}\circ\left(\psi^*_T|_{\mathcal{N}_{\varphi_{\theta((i+1)T)}(p)}}\right)^{-1}\circ\psi^*_{\theta((i+1)T)-\theta(iT)}|_{\mathcal{N}_{\varphi_{\theta(iT)}(p)}}.$$ Since $$\lvert\theta(iT)-iT\rvert\leq Nd(y,\varphi_{lT}(y)),\quad\text{for }i=0,1,2,\cdots,l,$$ we have that
\begin{equation*}
\begin{aligned}
   &\left\lvert\theta((i+1)T)-T-\theta(iT)\right\rvert= \left\lvert\theta((i+1)T)-\theta(iT)-((i+1)T-T)\right\rvert\\=&\left\lvert\theta((i+1)T)-(i+1)T-(\theta(iT)-T)\right\rvert\leq\left\lvert\theta((i+1)T)-(i+1)T\right\rvert+\left\lvert(\theta(iT)-T)\right\rvert\\\leq&2Nd(y,\varphi_{lT}(y))<\sigma_2.
\end{aligned}  
\end{equation*}     
   Therefore, $\psi^*_T|_{\mathcal{N}_{\varphi_{\theta(iT)}(p)}}$ is sufficiently close to $\psi^*_{\theta((i+1)T)-\theta(iT)}|_{\mathcal{N}_{\varphi_{\theta(iT)}(p)}}$. Thus, $$\left(\psi^*_T|_{\mathcal{N}_{\varphi_{\theta((i+1)T)}(p)}}\right)^{-1}\circ\psi^*_{\theta((i+1)T)-\theta(iT)}|_{\mathcal{N}_{\varphi_{\theta(iT)}(p)}}$$ is sufficiently close to identity $$\left(\psi^*_{\theta((i+1)T)-\theta(iT)}|_{\mathcal{N}_{\varphi_{\theta((i+1)T)}(p)}}\right)^{-1}\circ\psi^*_{\theta((i+1)T)-\theta(iT)}|_{\mathcal{N}_{\varphi_{\theta(iT)}(p)}}.$$ On the other hand, since $$d(\varphi_t(y),\varphi_{\theta(t)}(p))<\varepsilon\lvert X(\varphi_t(y))\rvert,\quad\text{for every }t\in[0,lT],$$ we have that $$d(\varphi_{iT}(y),\varphi_{\theta(iT)}(p))<\varepsilon\lvert X(\varphi_{iT}(y))\rvert<\sigma_1,\quad\text{for every }i=0,1,2,\cdots,l.$$ Consequently, $$\left\lvert\log\left\lVert\left(\psi^*_T|_{\mathcal{N}_{\varphi_{(i+1)T}(y)}}\right)^{-1}\circ\psi^*_T|_{\mathcal{N}_{\varphi_{\theta((i+1)T)-T}(p)}}\right\rVert\right\rvert<\dfrac{\epsilon}{4}.$$ Then, $$\left\lvert\log\left\lVert T_i(y,p)\right\rVert\right\rvert<\dfrac{\epsilon}{2}.$$       	
\end{proof}
   Since$$\psi^*_{\theta(lT)}|_{\mathcal{N}_p}=\psi^*_T|_{\mathcal{N}_{\varphi_{(l-1)T}(y)}}\circ T_{l-1}(y,p)\circ\cdots\circ\psi^*_T|_{\mathcal{N}_{\varphi_T(y)}}\circ T_1(y,p)\circ\psi^*_T|_{\mathcal{N}_y}\circ T_0(y,p),$$ we have that 
\begin{equation*}
\begin{aligned}
   \lambda_2(p)&=\lim_{j\rightarrow+\infty}\dfrac{1}{j\theta(lT)}\log\left\lVert\psi^*_{\theta(lT)}|_{\mathcal{N}_p}\right\rVert=\lim_{j\rightarrow+\infty}\dfrac{1}{j\theta(lT)}\log\left\lVert \left(\psi^*_{\theta(lT)}|_{\mathcal{N}_p}\right)^j\right\rVert\\&\leq\dfrac{1}{\theta(lT)}\log\left\lVert\psi^*_{\theta(lT)}|_{\mathcal{N}_p}\right\rVert\leq\dfrac{1}{\theta(lT)}\left(\sum_{i=0}^{l-1}\log\left\lVert\psi^*_T|_{\mathcal{N}_{\varphi_{iT}(y)}}\right\rVert+\sum_{i=0}^{l-1}\log\left\lVert T_i(y,p)\right\rVert\right).
\end{aligned}
\end{equation*}
   Since $1-\varepsilon<\theta'(t)<1+\varepsilon$, for every $t\in[0,lT]$, we have that $$(1-\varepsilon)lT<\lvert\theta(lT)\rvert<(1+\varepsilon)lT.$$ Therefore, $$\lambda_2(p)\leq\dfrac{lT(1+\varepsilon)}{\theta(lT)}\cdot\dfrac{1}{lT}\left(\sum_{i=0}^{l-1}\log\left\lVert\psi^*_T|_{\mathcal{N}_{\varphi_{iT}(y)}}\right\rVert+\sum_{i=0}^{l-1}\log\left\lVert T_i(y,p)\right\rVert\right)\leq\lambda_2+\epsilon.$$

   {\bf Step $2$}: In this step, we will prove that there exists a periodic point $p$ such that $$\lambda_2(p)\geq\lambda_2-\epsilon.$$ 
   
   For estimating the lower bound approximation of Lyapunov exponents, we give a following claim on invertible linear maps.
\begin{claim}
   Given two invertible linear maps $A,~B:\mathbb{R}^d\longrightarrow\mathbb{R}^d$. For $\epsilon_1>0$, there is $\sigma_3>0$ such that if $\lVert B-I\rVert\leq\sigma_3$, then 
\begin{equation*}
   \lVert ABv\rVert\geq e^{-\epsilon_1}\lVert Av\rVert,\quad \forall~v\in\mathbb{R}^n. \tag{\S}
\end{equation*}	
\end{claim} 

\begin{proof}
   We only need to prove the inequality $(\S)$ for $v\in\mathbb{R}^n$ with $\lVert v\rVert=1$. Now, we always assume that $\lVert v\rVert=1$. For any $\epsilon_1>0$, let $\epsilon_2>0$ satisfying that $1-\epsilon_2>e^{-\epsilon_1}$ and $$\sigma_3=\dfrac{m(A)}{\lVert A\rVert}\epsilon_2.$$ Therefore, $$\lVert A\rVert\cdot\lVert B-I\rVert\leq\epsilon_2m(A).$$ Consequently, if $\lVert B-I\rVert\leq\sigma_3$, we have that
\begin{equation*}
\begin{aligned}
   \lVert ABv\rVert&=\lVert Av+A(B-I)v\rVert\geq\lVert Av\rVert-\lVert A\rVert\cdot\lVert B-I\rVert\cdot\lVert v\rVert=\lVert Av\rVert-\lVert A\rVert\cdot\lVert B-I\rVert\\&\geq\lVert Av\rVert-\epsilon_2m(A)\geq(1-\epsilon_2)\lVert Av\rVert\geq e^{-\epsilon_1}\lVert Av\rVert.
\end{aligned}
\end{equation*}      
\end{proof}
     
   For given $x\in M$ and an enough small neighborhood $U(x)$ of $x$, by the Lyapunov chart map ${\rm Exp}_x$, we can locally trivialize the tangent bundle $T_{U(x)}M=U(x)\times\mathbb{R}^d$ of $U(x)$. Thus, for every $y\in U(x)$, we can translate the vectors of $T_xM$ to $T_yM$. So, we can translate the vectors of $\mathcal{N}_x$ to $\mathcal{N}_y$. According to Lemma \ref{Dlgx}, there is a $\beta^*_0>0$ such that for every $x\in\Lambda^{T}_{\eta}(k)$, every point $y\in U(x)=B(x,\beta^*_0\lvert X(x)\rvert)$ is not a singularity. Thus, for every $v\in\mathcal{N}_y$ and $\rho>1$, we can define the $\rho$-cone in $\mathcal{N}_y$: $$\mathcal{C}_\rho(y)=\{v\in\mathcal{N}_y:~\lVert v_2\rVert\geq\rho\lVert v_1\rVert\},$$ where $v=v_1+v_2$, $v_j\in E_j(x)$, $E_j(x)$ is the Oseledec subspace with respect to the Lyapunov exponent $\lambda_j$ for $j=1,2$. Namely, translating the splitting $E_1(x)\oplus E_2(x)$ of $\mathcal{N}_x$ to $\mathcal{N}_y$, and for $v\in\mathcal{N}_y$, $v=v_1+v_2$ is the expression corresponding to the splitting. For compution, we can define a equivalent norm $\lVert\cdot\rVert'$ on $\mathcal{N}_y$: $$\lVert v\rVert'=\max\{\lVert v_1\rVert,\lVert v_2\rVert\rVert\}.$$ Since the Oseledec splitting  $$\mathcal{N}_\Gamma=E_1\oplus E_2$$ is a dominated splitting, for $\psi^*_T$, there exist two positive number $\gamma\in(0,1)$ and $\rho>1$ such that $\gamma\rho>1$ and $$\psi^*_T\mathcal{C}_{\gamma\rho}(x)\subset\mathcal{C}_{\rho}(\varphi_{T}(x)),\quad\forall~x\in\Lambda^{T}_{\eta}(k).$$ Furthermore, we can choose $0<\sigma_4=\sigma_4(\gamma,\rho)$ satisfying that $$\lVert T-I\rVert\leq\sigma_4\Longrightarrow T\mathcal{C}_{\rho}\subset\mathcal{C}_{\gamma\rho}.$$
   
   Take $y\in\Lambda^{T}_{\eta}(k)$, by Poincar\'{e} Recurrence Theorem, there is an increasing integer sequence $\{l_n\}$ such that $$d(y,\varphi_{l_nT}(y))\longrightarrow 0.$$ Let $$\sigma_3=\dfrac{m\left(\psi^*_T\right)}{\left\lVert\psi^*_T\right\rVert}\left(1-e^{-\epsilon/4}\right).$$ For $0<\varepsilon<\min\left\{\dfrac{\epsilon}{4},\dfrac{\sigma_1}{K_0},\sigma_2,\sigma_3,\log(1+\sigma_4)\right\}$ and $\alpha\in(0,1/k)$, we get a constant $\delta_2=\delta_2(\varepsilon,\alpha)$ by the Theorem \ref{Shadow}. For $l_n$ large enough, we obtain a $(\eta,T)$-$\psi^*_t$-quasi hyperbolic orbit segment $\varphi_{[0,l_nT]}(y)$ satisfying that
\begin{itemize}
   \item $d(y,\text{Sing}(X))>\alpha$ and $d(\varphi_{l_nT}(y),\text{Sing}(X))>\alpha$;
  	
   \item $y\in\Lambda^{T}_{\eta}(k)$, $\varphi_{l_nT}(y)\in\Lambda^{T}_{\eta}(k)$ and $d(y,\varphi_{l_nT}(y))<\delta_2$. 
\end{itemize}
   According to Theorem \ref{Shadow}, there is a $C^1$ strictly increasing function $\theta:[0,l_nT]\rightarrow\mathbb{R}$ and a periodic point $p\in M$ such that
\begin{itemize}
   \item[(1)] $\theta(0)=0$ and $1-\varepsilon<\theta'(t)<1+\varepsilon$, for every $t\in[0,l_nT]$;
  	
   \item[(2)] $p$ is periodic: $\varphi_{\theta(l_nT)}(p)=p$;
  	
   \item[(3)] $d(\varphi_t(y),\varphi_{\theta(t)}(p))<\varepsilon\lvert X(\varphi_t(y))\rvert$, for every $t\in[0,l_nT]$.
\end{itemize}    
   Moreover, by the Proposition 4.4 in \cite{WYZ}, there is constant  $N=N(\eta,T)$ such that $$\lvert\theta(iT)-iT\rvert\leq Nd(y,\varphi_{l_nT}(y)),\quad\text{for }i=1,2,\cdots,l_n.$$ Take $l_n$ large enough, such that $$d(y,\varphi_{l_nT}(y))<\dfrac{\sigma_2}{2N}.$$ We will write $l_n$ simply $l$ when no confusion can arise. By the similar discussions as the claim in Step $1$, we have that $$\lVert T_i(y,p)-I\rVert\leq\sigma_4,\text{ for every }i=0,1,\cdots,l-1$$ and $$\lVert T_i(y,p) v\rVert'\geq e^{-\frac{\epsilon}{2}}\lVert v\rVert',\quad\forall~ v\in\mathcal{N}_{\varphi_{\theta(iT)}(p)}\big\backslash\{0\},\text{ for every }i=0,1,\cdots,l-1.$$ Therefore, for each $i=0,1,\cdots,l-1$, we have that
\begin{equation*}
   \psi^*_{T}|_{\mathcal{N}_{\varphi_{iT}(y)}}\circ T_i(y,p)\mathcal{C}_{\rho}\left(\varphi_{\theta(iT)}(p)\right)\subset\mathcal{C}_{\rho}\left(\varphi_{\theta((i+1)T)}(p)\right).\tag{$\vartriangle$}
\end{equation*}   
  
   For every $v=(v_1,v_2)\in\mathcal{C}_\rho(p)$ and $i=0,1,\cdots,l-1$, denote by $$\bar{v}^0=v,\quad \bar{v}^i=T_i(y,p)\circ\psi^*_{T}|_{\mathcal{N}_{\varphi_{(i-1)T}(y)}}\circ T_{i-1}(y,p)\circ\cdots\circ\psi^*_{T}|_{\mathcal{N}_y}\circ T_0(y,p)(v)$$ and $$v^{i+1}=\psi^*_{T}|_{\mathcal{N}_{\varphi_{iT}(y)}}\circ T_i(x,p)\circ\psi^*_{T}|_{\mathcal{N}_{\varphi_{(i-1)T}(y)}}\circ T_{i-1}(x,p)\circ\cdots\circ\psi^*_{T}|_{\mathcal{N}_y}\circ T_0(y,p)(v).$$ Corresponding the Oseledec splitting $$\mathcal{N}_\Gamma=E_1\oplus E_2,$$ denote by $$\bar{v}^i=\bar{v}^i_1+\bar{v}^i_2,\quad v^i=v^i_1+v^i_2.$$ Then, for each $i=0,1,\cdots,l-1$, we have that
\begin{equation}
\begin{aligned}
   \left\lVert v^{i+1}\right\rVert'=\left\lVert v^{i+1}_2\right\rVert&=\left\lVert\psi^*_T(\bar{v}^i_2)\right\rVert\geq m\left(\psi^*_T|_{E_2(\varphi_{iT}(y))}\right)\left\lVert\bar{v}^i_k\right\rVert=m\left(\psi^*_{T}|_{E_2(\varphi_{iT}(y))}\right)\left\lVert\bar{v}^i\right\rVert'\\&=m\left(\psi^*_{T}|_{E_2(\varphi_{iT}(y))}\right)\left\lVert T_i(x,p)v^i\right\rVert'\geq e^{-\frac{\epsilon}{2}}m\left(\psi^*_{T}|_{E_2(\varphi_{iT}(y))}\right)\left\lVert v^i\right\rVert'\\&\geq e^{-\frac{\epsilon}{2}(i+1)}\prod^{i}_{j=0}m\left(\psi^*_{T}|_{E_2(\varphi_{jT}(y))}\right)\lVert v\rVert'.
\end{aligned}
\end{equation}   
   Since $\mathcal{C}_\rho(p)$ contains the subspace $E_2(p)$, formulas $(\Delta)$, $(1)$ and Lemma \ref{time} indicate that $$\lambda_2(p)\geq\lambda_2-\epsilon.$$ 
   
   {\bf Step $3$}: Take $y\in\Lambda^{T}_{\eta}(k)$, by Poincar\'{e} Recurrence Theorem, there is an increasing integer sequence $\{l_n\}$ such that $$d(y,\varphi_{l_nT}(y))\longrightarrow 0.$$ For the numbers $\sigma_1,~\sigma_2$ given in {\bf Step $1$}, and $\sigma_3,~\sigma_4$ given in {\bf Step $2$}, for $$0<\varepsilon<\min\left\{\dfrac{\epsilon}{4},\dfrac{\sigma_1}{K_0},\sigma_2,\sigma_3,\log(1+\sigma_4)\right\}$$ and $\alpha\in(0,1/k)$, we get a constant $\delta_2=\delta_2(\varepsilon,\alpha)$ by the Theorem \ref{Shadow}. For $l_n$ large enough, we obtain a $(\eta,T)$-$\psi^*_t$-quasi hyperbolic orbit segment $\varphi_{[0,l_nT]}(y)$ satisfying that
\begin{itemize}
   \item $d(y,\text{Sing}(X))>\alpha$ and $d(\varphi_{l_nT}(y),\text{Sing}(X))>\alpha$;
  	
   \item $y\in\Lambda^{T}_{\eta}(k)$, $\varphi_{l_nT}(y)\in\Lambda^{T}_{\eta}(k)$ and $d(y,\varphi_{l_nT}(y))<\delta_2$. 
\end{itemize}
   According to Theorem \ref{Shadow}, there is a $C^1$ strictly increasing function $\theta:[0,l_nT]\rightarrow\mathbb{R}$ and a periodic point $p\in M$ such that
\begin{itemize}
   \item[(1)] $\theta(0)=0$ and $1-\varepsilon<\theta'(t)<1+\varepsilon$, for every $t\in[0,l_nT]$;
  	
   \item[(2)] $p$ is periodic: $\varphi_{\theta(l_nT)}(p)=p$;
  	
   \item[(3)] $d(\varphi_t(y),\varphi_{\theta(t)}(p))<\varepsilon\lvert X(\varphi_t(y))\rvert$, for every $t\in[0,l_nT]$.
\end{itemize}    
   Moreover, by the Proposition 4.4 in \cite{WYZ}, there is constant  $N=N(\eta,T)$ such that $$\lvert\theta(iT)-iT\rvert\leq Nd(y,\varphi_{l_nT}(y)),\quad\text{for }i=1,2,\cdots,l_n.$$ Take $l_n$ large enough, such that $$d(y,\varphi_{l_nT}(y))<\dfrac{\sigma_2}{2N}.$$ We will write $l_n$ simply $l$ when no confusion can arise. Thus, we get a periodic orbit of $p$ with period $\theta(lT)$. Then, we have an invariant measure $$\mu_p=\dfrac{1}{\theta(lT)}\int_{0}^{\theta(lT)}\delta_{\varphi_t(p)}dt$$ supported on the periodic orbit of $p$. Let $$\lambda_1(p)\leq\lambda_2(p)$$ be the Lyapunov exponents of $\mu_p$. By Step $1$ and Step $2$, we have that $$\lvert\lambda_2(p)-\lambda_2\rvert<\epsilon.$$ This proves the proposition.
\end{proof}

   Now, we provide some notations and basic facts for the vector field $-X$. Given a vector field $X\in\mathfrak{X}^1(M^d)$, denote by $-X$ the vector field which satisfies that $-X(x)$ and $X(x)$ have opposite directions, $\lvert-X(x)\rvert=\lvert X(x)\rvert$, for every $x\in M$. Therefore, the vector field $-X$ can also generate a $C^1$ flow which is denoted by $\overline{\varphi_t}$. Correspondingly, we can obtain the tangent flow $\overline{\Phi_t}$, the linear Poincar\'{e} flow $\overline{\psi_t}$, the extended linear Poincar\'{e} flow $\overline{\widetilde{\psi}_t}$ and scaled linear Poincar\'{e} $\overline{\psi^*_t}$.
   
   According to the relationship between $\overline{\varphi_t}$ and $\varphi_t$, every ergodic invariant measure $\mu$ of $\varphi_t$ is also an ergodic invariant measure of $\overline{\varphi}_t$. Therefore, by the Oseledec Theorem \cite{OV}, for $\mu$-almost every $x$, there are a positive integer $k\in[1,d]$, real numbers $\overline{\chi_1}<\cdots<\overline{\chi_{k}}$ and a measurable $\overline{\Phi_t}$-invariant splitting $$T_xM=\overline{E_1}(x)\oplus\overline{E_2}(x)\oplus\cdots\oplus \overline{E_k}(x)$$ such that $$\lim_{t\to\pm\infty}\dfrac{1}{ t}\log\parallel\overline{\Phi_t}(v)\parallel=\overline{\chi_i},~\forall~v\in \overline{E_i}(x),~v\neq 0,~i=1,2,\cdots,k.$$ The numbers $\overline{\chi_1},\cdots,\overline{\chi_{k}}$ are called \emph{\bf Lyapunov exponents} at point $x$ of $\overline{\Phi_t}$ with respect to $\mu$. Denote by $$\overline{d_i}=\text{dim}(\overline{E_i}(x)),\quad i=1,2,\cdots,k$$ the multiplicities of those Lyapunov exponents, and the vector formed by these numbers (counted with multiplicity, endowed with the increasing order) is called \emph{\bf Lyapunov vector} at point $x$ of $\overline{\Phi_t}$ with respect to $\mu$. Regardless of the multiplicity, we can rewrite the Lyapunov exponents of $\mu$ by $\overline{\lambda_1}\leq\overline{\lambda_2}\leq\cdots\leq\overline{\lambda_d}$. Namely, $$\overline{\lambda_j}=\overline{\chi_i},\quad\text{ for each } \overline{d_1}+\overline{d_2}+\cdots+\overline{d_{i-1}}<j\leq \overline{d_1}+\overline{d_2}+\cdots+\overline{d_i}.$$ 
   
   For an ergodic invariant measure $\mu$ which is not concentrated on ${\rm Sing}(X)$, according to the definition of linear Poincar\'e flow $\overline{\psi_t}:\mathcal{N}\rightarrow\mathcal{N}$ and the Oseledec Theorem \cite{OV}, for $\mu$-almost every $x$, there are a positive integer $k\in[1,d-1]$, real numbers $\overline{\chi_1}<\overline{\chi_2}<\cdots<\overline{\chi_k}$ and a measurable $\overline{\psi_t}$-invariant splitting (for simplicity, we omit the base point) $$\mathcal{N}=\overline{E_1}\oplus \overline{E_2}\oplus\cdots\oplus\overline{E_k}$$ on normal bundle such that $$\lim_{t\to\pm\infty}\dfrac{1}{t}\log\parallel\overline{\psi_t}(v) \parallel=\overline{\chi_i},~\forall~v\in\overline{E_i},~v\neq 0,~i=1,2,\cdots,k.$$ The numbers $\chi_1,\cdots,\lambda_k$ are \emph{\bf Lyapunov exponents} of $\overline{\psi_t}$ with respect to the ergodic measure $\mu$. According to the definition of the scaled linear Poincar\'e flow $\overline{\psi_t^*}:\mathcal{N}\rightarrow\mathcal{N}$, we have that $$\overline{\psi^*_t}(v)=\frac{\parallel X(x)\parallel}{\parallel X(\overline{\varphi_t}(x))\parallel}\overline{\psi_t}(v)=\frac{\overline{\psi_t}(v)}{\parallel\overline{\Phi_t}|_{\langle X(x)\rangle}\parallel},\quad\text{for each non-zero }v\in\mathcal{N}.$$ By the Poincar\'{e} Recurrence Theorem, for $\mu$-almost every $x\in M$, the Lyapunov exponent for $\overline{\Phi_t}$ along the flow direction is zero, namely,  $$\lim_{t\to\pm\infty}\dfrac{1}{t}\log\parallel\overline{\Phi_t}|_{\langle X(x)\rangle}\parallel=0.$$ Therefore,  $$\lim_{t\to\pm\infty}\dfrac{1}{t}\log\parallel\overline{\psi^*_t}(v)\parallel=\lim_{t\to\pm\infty}\dfrac{1}{t}\left(\log\parallel\overline{\psi_t}(v)\parallel-\log\parallel\overline{\Phi_t}|_{\langle X(x)\rangle}\parallel\right)=\lim_{t\to\pm\infty}\dfrac{1}{t}\log\parallel\overline{\psi_t}(v)\parallel.$$ Thus, for $\mu$-almost every $x$, we also have an integer $k\in[1,d-1]$, real numbers $\overline{\chi_1}<\overline{\chi_2}<\cdots<\overline{\chi_k}$ and a measurable $\overline{\psi^*_t}$-invariant splitting (for simplicity, we omit the base point) $$\mathcal{N}=\overline{E_1}\oplus\overline{E_2}\oplus\cdots\oplus\overline{E_k}$$ on normal bundle satisfying that $$\lim_{t\to\pm\infty}\dfrac{1}{t}\log\parallel\overline{\psi^*_t}(v) \parallel=\overline{\chi_i},~\forall~v\in\overline{E_i},~v\neq 0,~i=1,2,\cdots,k.$$ The numbers $\overline{\chi_1},\cdots,\overline{\chi_k}$ are \emph{\bf Lyapunov exponents} of $\overline{\psi^*_t}$ with respect to the ergodic measure $\mu$. It means that the Lyapunov exponents of the scaled linear Poincar\'{e} flow $\overline{\psi^*_t}$ and those of the linear Poincar\'{e} flow $\overline{\psi_t}$ are the same. Hence, the scaled linear Poincar\'{e} flow and the linear Poincar\'{e} flow also have the same Oseledec splitting.
   
   According to the relationship between $-X$ and $X$, one can get that $$\overline{\varphi_{-t}}=\varphi_t,\quad\text{ for every }t\in\mathbb{R}.$$ Therefore, for every nonzero vector $v\in T_xM$, we have that $$\lim_{t\to\pm\infty}\dfrac{1}{ t}\log\parallel\overline{\Phi_t}(v)\parallel=\lim_{t\to\pm\infty}\dfrac{1}{t}\log\parallel\Phi_{-t}(v)\parallel=-\lim_{t\to\pm\infty}\dfrac{1}{-t}\log\parallel\Phi_{-t}(v)\parallel.$$ Consequently, for every nonzero vector $v\in\mathcal{N}$, we also have that $$\lim_{t\to\pm\infty}\dfrac{1}{ t}\log\parallel\overline{\psi_t}(v)\parallel=\lim_{t\to\pm\infty}\dfrac{1}{t}\log\parallel\psi_{-t}(v)\parallel=-\lim_{t\to\pm\infty}\dfrac{1}{-t}\log\parallel\psi_{-t}(v)\parallel$$ and $$\lim_{t\to\pm\infty}\dfrac{1}{ t}\log\parallel\overline{\psi^*_t}(v)\parallel=\lim_{t\to\pm\infty}\dfrac{1}{t}\log\parallel\psi^*_{-t}(v)\parallel=-\lim_{t\to\pm\infty}\dfrac{1}{-t}\log\parallel\psi^*_{-t}(v)\parallel.$$ Thus, $$\overline{\chi}_i=-\chi_{k-i+1},\text{ for every }i=1,2,\cdots,k,$$ where $\chi_1,\cdots,\chi_{k}$ are the \emph{\bf Lyapunov exponents} of the linear Poincar\'{e} flow $\psi_t$ (scaled linear Poincar\'{e} flow $\psi^*_t$) with respect to $\mu$. Furthermore, we have that    $$\overline{E}_i=E_{k-i+1},~\overline{d}_i=\text{dim}\overline{E}_i=d_{k-i+1},~\text{ for }i=1,2,\cdots,k.$$ Regardless of the multiplicity of Lyapunov exponents, we have that $$\overline{\lambda_i}=-\lambda_{d-i},~\text{ for }i=1,2,\cdots,d-1.$$ Based on the above discussion, we can conclude that every ergodic hyperbolic invariant measure of the flow $\varphi_t$ is also an ergodic hyperbolic invariant measure of the flow $\overline{\varphi_t}$.

\begin{Proposition}\label{TSLE}{\rm (The approximation of smallest Lyapunov exponent)}
   Let $\mu$ be an ergodic hyperbolic invariant regular measure of the $C^1$ star vector field $X\in\mathfrak{X}^1(M)$ on a compact three dimensional smooth Riemannian manifold, $\lambda_1\leq\lambda_2$ be the Lyapunov exponents of $\mu$ with respect to the scaled linear Poincar\'{e} flow $\psi^*_t$. For every $\epsilon>0$, there is a hyperbolic periodic point $p$ of $\varphi_t$ such that $$\lvert\lambda_1-\lambda_1(p)\rvert<\epsilon,$$ where $\lambda_1(p)\leq\lambda_2(p)$ are the Lyapunov exponents of $\mu_p$ with respect to the scaled linear Poincar\'{e} flow $\psi^*_t$.     	
\end{Proposition}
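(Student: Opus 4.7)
The plan is to reduce Proposition \ref{TSLE} directly to Proposition \ref{TLLE} by passing to the time-reversed vector field $-X$, exploiting the detailed discussion the authors have just completed about the relationship between the Lyapunov exponents of $\mu$ with respect to $\varphi_t$ and $\overline{\varphi_t}$.

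First I would verify that $-X$ is again a $C^1$ star vector field. This is immediate from Definition \ref{SVF}: a periodic orbit or singularity of $\overline{\varphi_t^Y}$ is hyperbolic if and only if the corresponding critical element of $\varphi_t^Y$ is hyperbolic, and the map $Y \mapsto -Y$ is a homeomorphism of $\mathfrak{X}^1(M)$ that carries the $C^1$ star neighborhood $\mathcal{U}$ of $X$ onto a $C^1$ star neighborhood of $-X$. Moreover, by the discussion preceding the proposition, $\mu$ remains an ergodic hyperbolic invariant regular measure of $\overline{\varphi_t}$, and its Lyapunov exponents with respect to $\overline{\psi^*_t}$ are $\overline{\lambda_1} = -\lambda_2 \leq \overline{\lambda_2} = -\lambda_1$. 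In particular, the largest Lyapunov exponent of $\mu$ under $\overline{\psi^*_t}$ equals $-\lambda_1$, and the Oseledec splitting $\mathcal{N}_\Gamma = E_1 \oplus E_2$ for $\psi^*_t$ becomes the dominated splitting $\mathcal{N}_\Gamma = E_2 \oplus E_1$ for $\overline{\psi^*_t}$ (the direction of domination is reversed under time reversal).

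Next I would apply Proposition \ref{TLLE} to the star vector field $-X$ with the measure $\mu$. For every $\epsilon > 0$ this produces a hyperbolic periodic point $p$ of $\overline{\varphi_t}$ whose largest Lyapunov exponent $\overline{\lambda_2}(p)$ with respect to $\overline{\psi^*_t}$ satisfies $|\overline{\lambda_2} - \overline{\lambda_2}(p)| < \epsilon$, i.e., $|-\lambda_1 - \overline{\lambda_2}(p)| < \epsilon$.

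Finally I would translate back to $\varphi_t$. A point $p$ is periodic under $\overline{\varphi_t}$ if and only if it is periodic under $\varphi_t$ (with the same orbit, traversed in reverse), and the associated invariant probability measure $\mu_p$ coincides for the two flows. The time-reversal correspondence for exponents gives $\overline{\lambda_2}(p) = -\lambda_1(p)$, where $\lambda_1(p) \leq \lambda_2(p)$ are the exponents of $\mu_p$ under $\psi^*_t$. Substituting yields $|\lambda_1 - \lambda_1(p)| < \epsilon$, which is the conclusion; hyperbolicity of $p$ as a periodic orbit of $\varphi_t$ is preserved since the nonvanishing of Lyapunov exponents is invariant under time reversal. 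No new technical obstacle is expected here beyond the bookkeeping of the sign change: all the analytic work — the Pesin block construction, the Lyapunov chart estimates, the quasi-hyperbolic shadowing via Theorem \ref{Shadow}, and the cone argument for the lower bound — has already been carried out in Proposition \ref{TLLE}, and the symmetry of the setup between $X$ and $-X$ makes the transfer clean. Combining Propositions \ref{TLLE} and \ref{TSLE} then immediately proves Theorem \ref{ThmB}.
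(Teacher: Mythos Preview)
Your proposal is correct and follows essentially the same approach as the paper: both pass to the reversed vector field $-X$, observe that $\mu$ remains an ergodic hyperbolic regular measure with largest exponent $\overline{\lambda_2}=-\lambda_1$, invoke the argument of Proposition~\ref{TLLE} for $-X$ to obtain a periodic point $p$ with $|\overline{\lambda_2}-\overline{\lambda_2}(p)|<\epsilon$, and then translate back via $\overline{\lambda_2}(p)=-\lambda_1(p)$. The only cosmetic difference is that the paper re-displays the Pesin block construction for $-X$ before citing ``a discussion similar to Steps 1, 2 and 3'' of Proposition~\ref{TLLE}, whereas you invoke Proposition~\ref{TLLE} for $-X$ as a black box; your explicit verification that $-X$ is again star is a welcome detail the paper simply asserts.
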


\begin{proof}
   We consider the flow $\overline{\varphi_t}$ generated by the $C^1$ star vector field $-X\in\mathfrak{X}^1(M)$ on a compact three dimensional smooth Riemannian manifold. Then, $\mu$ is an ergodic hyperbolic invariant regular measure of the flow $\overline{\varphi_t}$. Since every star vector field admits a dominated splitting $\mathcal{N}_{\text{supp}(\mu)\backslash\text{Sing}(X)}=E\oplus F$ w.r.t the scaled linear Poincar\'{e} flow $\overline{\psi^*_t}$ such that $\text{\rm dim}(E)=\text{\rm Ind}(\mu)$, the Oseledec splitting $\mathcal{N}_\Gamma=\overline{E_1}\oplus\overline{E_2}$ of an ergodic hyperbolic invariant measure $\mu$ is a dominated splitting. According to the similar discussions on Lyapunov exponents as three different cases in the proof of Theorem \ref{ThmA}, we have that $$\lim_{t\to\pm\infty}\dfrac{1}{t}\log\parallel\overline{\psi^*_t}(v) \parallel=\overline{\lambda_j},~\forall~v\in\overline{E_j},~v\neq 0,~j=1,2,$$ where $\overline{\lambda_1}<0<\overline{\lambda_2}$. 
   
   Let $$\overline{\chi}=\min\{\lvert\overline{\lambda_1}\rvert,~\overline{\lambda_2}\}.$$ Since the Oseledec splitting $\mathcal{N}_\Gamma=\overline{E_1}\oplus \overline{E_2}$ is a dominated splitting, by Lemma \ref{time}, for every $0<\epsilon/4\ll\overline{\chi}$ and every $\delta_3\in(0,1/9)$, there is a positive number $\overline{L}=\overline{L}(\epsilon/4,\delta_3)$ such that
\begin{itemize}
   \item for every $T\geq\overline{L}$, there exists a measurable set $\overline{\Gamma^T}=\overline{\Gamma^T}(\epsilon/4,\delta_3)\subset\Gamma$ with $\mu\left(\overline{\Gamma^T}\right)\geq 1-\delta_3$;
 	
   \item there is a natural number $N=N(T)$ such that for every $n\geq N$ and every $x\in\overline{\Gamma^T}$, we have that $$\prod_{i=0}^{n-1}\left\lVert\overline{\psi^*_T}\lvert_{\overline{E_1}(\overline{\varphi_{iT}}(x))}\right\rVert\leq e^{-(\overline{\chi}-\epsilon/4)nT},~\prod_{i=0}^{n-1}m\left(\overline{\psi^*_T}|_{\overline{E_2}(\overline{\varphi_{iT}}(x))}\right)\geq e^{(\overline{\chi}-\epsilon/4)nT}.$$ $$\dfrac{\left\lVert\overline{\psi^*_T}|_{\overline{E_1}(x)}\right\rVert}{m\left(\overline{\psi^*_T}|_{\overline{E_2}(x)}\right)}\leq e^{-T(\overline{\chi}-\epsilon/4)}.$$     	  
 \end{itemize}
   Fix an integer $T_0\geq\overline{L}(\epsilon/4,\delta_3)$, let $\eta_0=(\overline{\chi}-\epsilon/4)T_0$, for $k\in\mathbb{N}^+$, we define the {\bf Pesin block } $\Lambda^{T_0}_{\eta_0}(k)$ with respect to the scaled linear Poincar\'{e} flow $\overline{\psi^*_t}$ as:
\begin{equation*}
\begin{aligned}
   \Lambda^{T_0}_{\eta_0}(k)=\Bigg\{x\in\Gamma:&\prod_{i=0}^{n-1}\left\lVert\overline{\psi^*_{T_0}}|_{\overline{E_1}(\overline{\varphi_{iT_0}}(x))}\right\rVert\leq ke^{-n\eta_0},~\forall~ n\geq 1,\\ &\prod_{i=0}^{n-1}m\left(\overline{\psi^*_{T_0}}|_{\overline{E_2}(\overline{\varphi_{iT_0}}(x))}\right)\geq ke^{n\eta_0},~\forall~ n\geq 1,~d(x,\text{Sing}(X))\geq\dfrac{1}{k}\Bigg\}.
\end{aligned}   
\end{equation*}
   According to \cite[Proposition 5.3]{WYZ}, $\Lambda^{T_0}_{\eta_0}(k)$ is a compact set and $$\mu\left(\Lambda^{T_0}_{\eta_0}(k)\right)\rightarrow\mu\left(\overline{\Gamma^{T_0}}\right)\text{ as } k\rightarrow+\infty.$$ Therefore, one can fix $k$ large enough, such that $\mu\left(\Lambda^{T_0}_{\eta_0}(k)\right)>1-2\delta_3>0$. For this fixed $k$, there is $j=j(k)\in\mathbb{N}^+$ such that $k<e^{\frac{jT_0\epsilon}{4}}$. Thus, for every $x\in\Lambda^{T_0}_{\eta_0}(k)$, one has that $$\prod_{i=0}^{n-1}\left\lVert\overline{\psi^*_{jT_0}}|_{\overline{E_1}(\overline{\varphi_{ijT_0}}(x))}\right\rVert\leq e^{-(\overline{\chi}-\epsilon/2)njT_0},~\prod_{i=0}^{n-1}m\left(\overline{\psi^*_{jT_0}}|_{\overline{E_2}(\overline{\varphi_{ijT_0}}(x))}\right)\geq e^{(\overline{\chi}-\epsilon/2)njT_0},\quad\text{for }\forall~ n\geq 1.$$ Set $\eta=(\overline{\chi}-\epsilon/2)jT_0$, $T=jT_0$, we consider the set
\begin{equation*}
\begin{aligned}
   \Lambda^{T}_{\eta}(k)=\Bigg\{x\in\Gamma:&\prod_{i=0}^{n-1}\left\lVert\overline{\psi^*_{T}}\lvert_{\overline{E_1}(\overline{\varphi_{iT}}(x))}\right\rVert\leq e^{-n\eta},~\forall~ n\geq 1,\\ &\prod_{i=0}^{n-1}m\left(\overline{\psi^*_{T}}|_{\overline{E_2}(\overline{\varphi_{iT}}(x))}\right)\geq e^{n\eta},~\forall~ n\geq 1,~d(x,\text{Sing}(X))\geq\dfrac{1}{k}\Bigg\}.
\end{aligned}   
\end{equation*}       

   Take $y\in\Lambda^{T}_{\eta}(k)$, by Poincar\'{e} Recurrence Theorem, there is an increasing integer sequence $\{l_n\}$ such that $$d(y,\overline{\varphi}_{l_nT}(y))\longrightarrow 0.$$ By a discussion similar to the {\bf Step 1, 2 } and {\bf 3} in the proof of Proposition \ref{TLLE}, we conclude that there exists a periodic point $p$ of $\overline{\varphi}_t$ such that $$\lvert\overline{\lambda_2}(p)-\overline{\lambda_2}\rvert<\epsilon,$$where $\overline{\lambda_1}(p)\leq\overline{\lambda_2}(p)$ are the Lyapunov exponents of $\mu_p$ with respect to the scaled linear Poincar\'{e} flow $\overline{\psi^*_t}$. According to the relationship between the Lyapunov exponents of the scaled linear Poincar\'{e} flow $\overline{\psi^*_t}$ and those of the scaled linear Poincar\'{e} flow $\psi^*_t$, we have that $$\overline{\lambda_1}(p)=-\lambda_2(p),\quad\overline{\lambda_2}(p)=-\lambda_1(p),\quad\overline{\lambda_1}=-\lambda_2,\quad\overline{\lambda_2}=-\lambda_1.$$ Thus, $$\lvert\lambda_1(p)-\lambda_1\rvert<\epsilon.$$  This completes the proof.    	         	
\end{proof} 

\begin{proof}[Proof of Theorem \ref{ThmB}]
   Given $\varepsilon>0$ small enough, let $$\epsilon=\varepsilon,\quad L=\max\left\{L\left(\frac{\epsilon}{4},\delta_1\right),\overline{L}\left(\frac{\epsilon}{4},\delta_3\right)\right\},$$ where $L\left(\frac{\epsilon}{4},\delta_1\right)$ given in Proposition \ref{TLLE}, $\overline{L}\left(\frac{\epsilon}{4},\delta_3\right)$ given in Proposition \ref{TSLE}. The set $$\Lambda:=\Gamma^T\cap\overline{\Gamma^T}$$ is a $\mu$-positive set. For the $\epsilon>0$, by the Propositions \ref{TLLE} and \ref{TSLE},  there is a periodic point $p$ such that the Lyapunov exponents of $\mu_p$ satisfy that $$\lvert\lambda_i-\lambda_i(p)\rvert<\varepsilon,~\text{ for every }i=1,2,$$ where $\lambda_1(p)\leq\lambda_2(p)$ are the Lyapunov exponents of $\mu_p$.      
\end{proof}

%\noindent\textbf{Acknowledgements.} We would like to thank Dawei Yang for his useful suggestions. We would also like to thank Gang Liao and Lingmin Liao for their useful conversations.
%\clearpage
%\addcontentsline{toc}{section}{Reference}
%\bibliographystyle{amsplain}
%\bibliography{reference}

\noindent\text{Yuansheng Lu}\\
\noindent School of Mathematics and Statistics\\
\noindent Jiangsu Normal University, Xuzhou, 221116, P.R. China\\
\noindent L6yuansheng@163.com\\

\noindent\text{Wanlou Wu}\\
\noindent School of Mathematics and Statistics\\
\noindent Jiangsu Normal University, Xuzhou, 221116, P.R. China\\
\noindent wuwanlou@163.com\\

\end{document}